\documentclass[11pt,draft]{article}
\usepackage{indentfirst}
\usepackage{amsfonts}
\usepackage{amssymb}
\usepackage{mathrsfs}
\usepackage{amsmath}
\usepackage{amsthm}
\usepackage{cite}

\allowdisplaybreaks

\usepackage{geometry}


\usepackage{ifpdf}
\ifpdf
\usepackage[colorlinks=true,linkcolor=blue,citecolor=red, final,hyperindex]{hyperref}
\else
\usepackage[colorlinks,final,hyperindex,hypertex]{hyperref}
\fi

\usepackage{txfonts}
\usepackage{indentfirst}
\usepackage{latexsym}

\usepackage{enumitem}  
\usepackage{calc}





\def\<{\langle}
\def\>{\rangle}

\newtheorem{thm}{Theorem}[section]
\newtheorem{lem}[thm]{Lemma}
\newtheorem{cor}[thm]{Corollary}
\newtheorem{pro}[thm]{Proposition}
\newtheorem{ex}[thm]{Example}

\newtheorem{pdef}[thm]{Proposition-Definition}

\newtheorem{defi}{Definition}[section]

\newtheorem{rmk}{Remark}[section]
\begin{document}
\date{}
\title{Kupershmidt operators on $3$-BiHom-Poisson color algebras and $3$-BiHom-pre-Poisson color algebras structures}

\author{Othmen Ncib$^{1}$\footnote{E-mail: othmenncib@yahoo.fr}\: and Sergei Silvestrov $^{2}$\footnote{E-mail: sergei.silvestrov@mdu.se (Corresponding author)}
}

\date{
$^{1}${\small  University of Gafsa, Faculty of Sciences Gafsa, 2112 Gafsa, Tunisia}\\{\small $^2$ M\"{a}lardalen University,
Division of Mathematics and Physics,}\\
{\small \hspace{1.5 cm}
School of Education, Culture and Communication, }\\
{\small \hspace{1.5 cm}
Box 883, 72123 V\"{a}ster{\aa}s, Sweden.}\\
}
\maketitle
\abstract{
The purpose of this paper is to introduce the class of noncommutative
$3$-BiHom-Poisson color algebras, which is a combination of $3$-BiHom-Lie color algebras and BiHom-associative color algebras under a compatibility condition, called BiHom-Leibniz color identity, and then study their representations and associated Kupershmidt operators. In addition, we introduce the notion of noncommutative
$3$-BiHom-pre-Poisson color algebras and investigate the relationship between noncommutative $3$-BiHom-Poisson color algebras and $3$-BiHom-pre-Poisson color algebras via Kupershmidt operators.}

\noindent
\textbf{Keywords}: $3$-BiHom-Poissons color algebra, $3$-BiHom-pre-Poisson color algebra, representation, Kupershmidt operator.\newline
\noindent
\textbf{MSC2020}: 17B61, 17D30, 17A30, 17A40, 17B63, 17B75

\newpage 

\tableofcontents

\numberwithin{equation}{section}
\section{
Introduction
}
The notion of Poisson algebra was probably first introduced  by A. M. Vinogradov and J. S. Krasil'shchik in 1975 under the name 'canonical algebra' \cite{VinogradovKrasilshchik75} and by J. Braconnier \cite{Braconnier77} in 1977.
These structures play an important role in many fields in mathematics and mathematical physics, such as Poisson geometry, integrable systems, and non-commutative (algebraic or differential) geometry (see \cite{Vaisman} and the references therein). For example, Poisson algebras play a fundamental role in the deformations of associative commutative algebras \cite{Gerstenhaber}. Moreover, the cohomology, deformations, tensor product, and color generalizations of Poisson algebras have been studied (see, for example, \cite{Ginzburg-Kaledin,Wang-Gao-Zhang,Wu-Zhu-Chen,Zhu-HLi-Li} for more details).

The area of Hom-algebra structures began with the work of Hartwig, Larsson and Silvestrov \cite{Hartwig-Larsson-Silvestrov} in 2003, where general Hom-Lie algebras and more general quasi-Hom-Lie algebras were introduced along with a general method for the construction of quasi-deformations of Witt and Virasoro algebras based on the discretization of vector fields by twisted derivations
($\sigma$-derivations), motivated by the discrete modifications of differential calculi and the $q$-deformed Jacobi identities observed for specific $q$-deformed algebras and their representations in physics and in $q$-deformed differential calculi and homological algebra.
Following \cite{Hartwig-Larsson-Silvestrov}, Larsson and Silvestrov conducted a systematic study of central extensions of quasi-Hom-Lie algebras in \cite{LarssonSilvJA2005:QuasiHomLieCentExt2cocyid} that appeared first in 2004.
At the same time, Larsson and Silvestrov introduced more general quasi-Lie algebras and quasi-Leibniz algebras in \cite{Larsson-Silvestrov-QuasiLiealg} in 2004, and general color quasi-Lie algebras ($Gamma$-graded $\epsilon$-quasi-Lie algebras) in \cite{LSGradedquasiLiealg} in 2005, including general color quasi-Hom-Lie algebras and color Hom-Lie algebras, quasi-Lie superalgebras, quasi-Hom-Lie superalgebras, Hom-Lie superalgebras, and Hom-Lie algebras. Sigurdsson and Silvestrov continued this work in \cite{Sigurdsson-Silvestrov-Czech:witt} in 2006, and in \cite{SigSilvGLTbdSpringer2009-witt-centrext} in 2009, where color quasi-Lie algebras of the Witt type and their central extensions were considered.
Subsequently, various classes of Hom-Lie admissible algebras were considered by Makhlouf and Silvestrov in \cite{ms:homstructure} first appeared in 2006. In particular, in \cite{ms:homstructure}, the Hom-associative algebras have been introduced and shown to be Hom-Lie admissible, that is yielding Hom-Lie algebras using the commutator map as a new product, and in this sense constituting a natural generalization of associative algebras as Lie admissible algebras leading to Lie algebras via the commutator map as a new product.
In \cite{ms:homstructure}, moreover, several other interesting classes of Hom-Lie admissible algebras generalizing some classes of non-associative algebras, as well as examples of finite-dimensional Hom-Lie algebras, have been described.
Hom-algebra structures are very useful since Hom-algebra structures of a given type include their classical counterparts and open more possibilities for deformations, extensions of cohomological structures, and representations.
Since these pioneering works \cite{Hartwig-Larsson-Silvestrov,LarssonSilvJA2005:QuasiHomLieCentExt2cocyid,Larsson-Silvestrov-QuasiLiealg,LSGradedquasiLiealg,LarssonSilvestrov-quasidefal2twderiv,ms:homstructure}, Hom-algebra structures have developed in a popular broad area with an increasing number of publications in various directions, where many authors have studied various classes of 
Hom-algebras (see, for example,
\cite{Ammar-Mabrouk-Makhlouf,Benhassine-Mabrouk-Ncib,Cheng-Su,FSOS,Jin-X-Li,Y-Sheng,Ma-Chen-Lin,Makhlouf-Silvestrov} for more details).

The class of BiHom-algebras was introduced from a categorical approach in \cite{GMMP} as an extension of the class of Hom-algebras. Recall that in a BiHom-algebra the defining identities are twisted by two homomorphisms. When the two homomorphisms are the same, then BiHom algebras become Hom algebras in some cases. Recently, investigations have focused on these classes of algebras in the classic and super cases (see \cite{AAMM,BSO1,CQ,BCMN,CMM1,CMM2,LCC,ncib1} for more details).

The first instance of $n$-ary algebras in physics was in the context of a generalization of Hamiltonian mechanics proposed by Nambu \cite{Nambu:GenHD} in 1973. More recent motivation from string theory and $M$-branes involving naturally an algebra with a ternary operation called Bagger-Lambert algebra stimulated further significant development of $n$-ary algebras in physics. In 1985, Filippov \cite{Filippov} introduced the concept of $n$-Lie algebras and classified the $(n+1)$-dimen\-sional $n$-Lie algebras over an algebraically closed field of characteristic zero. The structure of $n$-Lie algebras is very different from that of Lie algebras because of the $n$-ary multilinear operations. The
$n=3$ case, i.e. $3$-ary multilinear operation, first appeared in
Nambu's work \cite{Nambu:GenHD} in the description of simultaneous classical dynamics of three particles. In that work, Nambu extended the Poisson bracket and arrived at the generalized Hamiltonian equation involving a $3$-ary multilinear bracket $\{\cdot,\cdot,\cdot\}$.
Takhtajan \cite{T} investigated the geometrical and algebraic aspects of generalized Nambu mechanics and established a connection between Nambu mechanics and Filippov's theory of $n$-Lie algebras \cite{Filippov}.

Lie color algebras are natural generalizations of Lie algebras and Lie superalgebras that become an active area of research in mathematics and physics. The cohomology of Lie color algebras was introduced and investigated in \cite{Scheunert1,Scheunert2}, and the representations of Lie color algebras were explicitly described in \cite{Feldvoss}. Several other color algebraic structures were considered by some authors \cite{Armakan-Silvestrov,Bakayoko-Laraiedh,BKP,Bakayoko2,Bakayoko-Silvestrov-multiplnHomLiealgs,Zhang-T}.

Poisson color algebras, considered in \cite{Wang-Gao-Zhang}, have simultaneously an associative color algebra structure and a Lie color algebra structure satisfying the Leibniz identity. Hom-type and BiHom-type generalizations, such as Hom-Poisson color algebras and BiHom-Poisson color algebras, were introduced in \cite{Bakayoko1,Bakayoko2}. The purpose of this paper is to provide a generalization of BiHom-Poisson color algebras in the ternary case. We also define their representations. We also introduce the notion of $3$-BiHom-pre-Poisson color algebras. Finally, we define the notion of Kupershmidt operators in $3$-BiHom-Poisson color algebras and study their relation to $3$-BiHom-pre-Poisson color algebras.
In Section \ref{sec-prelimdefin}, we recall relevant definitions and properties of BiHom-associative color algebras and $3$-BiHom-Lie color algebras and introduce the notions and obtain several results on representations and Kupershmidt operators of these algebraic structures. Section \ref{sec-noncom3bihompoissoncolalgs} discusses the notion of noncommutative $3$-BiHom-Poisson color algebras and their representations, and we define Kupershmidt operators. This section also contains main theorems concerning the representations and Kupershmidt operators of noncommutative $3$-BiHom-Poisson color algebras, as well as the twisted theorem. Section \ref{sec-noncom3bihomprepoisoncolalgs} introduces the notion of noncommutative $3$-BiHom-pre-Poisson color algebras. First, we introduce the notions of $3$-BiHom-pre-Lie color algebras and BiHom-dendriform color algebras, and the reader will obtain some results, among them and the most interesting are those associated with Kupershmidt representations and operators. Finally, by starting from the data of a noncommutative $3$-BiHom-Poisson color algebra $(\mathfrak{g},\{\cdot,\cdot,\cdot\},\epsilon,\alpha,\beta)$ and a Kupershmidt operator $T$ associated with a representation $(V,\rho,\mathfrak l,\mathfrak r,\alpha_V,\beta_V)$, we construct a structure of noncommutative $3$-BiHom-pre-Poisson color algebra on $V$. If $T$ is associated with the adjoint representation called the Rota-Baxter operator, we construct a structure of noncommutative $3$-BiHom-pre-Poisson color algebra on $\mathfrak{g}$. This section also contains some other results.

Throughout this paper, for simplicity of exposition, we assume that $\mathbb{K}$ is an algebraically closed field of characteristic zero, even though for most of the general definitions and results in this paper, this assumption is not essential.

\section{Preliminaries and basic definitions} \label{sec-prelimdefin}
In this section, we recall important basic notions and notations related to graded spaces. We also recall the notions of some algebraic structures, such as noncommutative $3$-BiHom-Lie color algebras and BiHom-Poisson color algebras, and provide examples and related general properties (see \cite{Bakayoko2,Zahra-Esmaeil} for more details). The representation theory and Kupershmidt operators of these algebraic structures are also introduced.
\begin{defi} Let $\Gamma$ be an abelian group.
\begin{enumerate}[label=\upshape{\arabic*.}, ref=\upshape{\arabic*}, labelindent=5pt, leftmargin=*]
\item A linear space $V$ is said to be $\Gamma$-graded if, there exists a family $(V_a)_{a\in \Gamma}$ of linear subspaces of $V$ such that
$V=\bigoplus\limits_{a\in \Gamma} V_a.$
\item
An element $x\in V$ is considered homogeneous of degree $a\in \Gamma$ if $x\in V_a$. We denote $\mathcal{H}(V)$ the set of all homogeneous elements in $V$.
\item  Let $V=\bigoplus\limits_{a\in \Gamma} V_a$ and $V'=\bigoplus\limits_{a\in \Gamma} V'_a$ be two $\Gamma$-graded linear spaces. A Linear mapping $f : V\rightarrow V'$ is said to be homogeneous of degree $b\in \Gamma$ if for all $a\in \Gamma$,
$$f(V_a)\subseteq  V'_{a+b}.$$
If a linear mapping $f$ is homogeneous of degree zero, that is $f(V_a)\subseteq V'_{a}$ for any $a\in \Gamma$, then it is sometimes called an even mapping (as for $\Gamma = \mathbb{Z}_2$ grading case).
\item
An algebra $(A, \cdot)$ is said to be $\Gamma$-graded if its underlying linear space is $\Gamma$-graded, $A=\bigoplus\limits_{a\in \Gamma}A_a$, and if furthermore for all $a, b\in \Gamma$,
\begin{equation} A_a\cdot A_b\subseteq A_{a+b}. \end{equation}
A morphism $f : A\rightarrow A'$
of $\Gamma$-graded algebras $A$ and $A'$
is by definition a morphism of algebras which is even, that is homogeneous mapping of degree zero.
\end{enumerate}
\end{defi}
\begin{defi}
Let $\Gamma$ be an abelian group. A map $\epsilon :\Gamma\times \Gamma\rightarrow {\bf \mathbb{K}^*=\mathbb{K}\setminus \{0\}}$ is called commutation factor (or a skew-symmetric bicharacter) on $\Gamma$ if, for all $a, a', a''\in \Gamma$,
\begin{align}
&\epsilon(a, a')\epsilon(a', a)=1,\label{cond-bic1}\\
& \epsilon(a, a'+a'')=\epsilon(a, a')\epsilon(a, a''),\label{cond-bic2}\\
&\epsilon(a+a', a'')=\epsilon(a, a'')\epsilon(a', a'').\label{cond-bic3}
\end{align}
\end{defi}
In particular, we can observe that $\epsilon(a, 0)=\epsilon(0, a)=1$ and $\epsilon(a,a)=\pm 1$ for all $a\in \Gamma$. If $x$ and $y$ are two homogeneous elements of degree $a$ and $b$ respectively and $\epsilon$ is a skew-symmetric bicharacter, then the notation $\epsilon(x, y)$ is often used instead of $\epsilon(a, b)$.
\begin{ex} 
Standard examples of skew-symmetric bicharacters are
\begin{enumerate}[label=\upshape{\arabic*)}, ref=\upshape{\arabic*}, labelindent=5pt, leftmargin=*]
\item $\Gamma=\mathbb{Z}_2,\quad \varepsilon(i, j)=(-1)^{ij}$,
\item $\Gamma=\mathbb{Z}_2^n=\{(\alpha_1, \dots, \alpha_n) \mid \alpha_i\in\mathbb{Z}_2 \}$,  \\ $\varepsilon((\alpha_1, \dots, \alpha_n), (\beta_1, \dots, \beta_n)) = (-1)^{\alpha_1\beta_1+\dots+\alpha_n\beta_n}$,
\item  $\Gamma=\mathbb{Z}_2\times\mathbb{Z}_2,\quad \varepsilon((i_1, i_2), (j_1, j_2))=(-1)^{i_1j_2-i_2j_1}$,
\item $\Gamma=\mathbb{Z}$,  $\epsilon(m,n)=\cos(mn\pi)=(-1)^{mn}$,
\item  $\Gamma=\mathbb{Z}\times\mathbb{Z} ,\quad \varepsilon((i_1, i_2), (j_1, j_2))=(-1)^{(i_1+i_2)(j_1+j_2)}$,
\item $\Gamma=\{-1, +1\} , \quad\varepsilon(i, j)=(-1)^{(i-1)(j-1)/{4}}$.
\end{enumerate}
\end{ex}
\begin{defi}[\hspace{-0.1pt}\cite{Bakayoko2}]
\label{dfn-BiHomasscoloralg}
(Multiplicative) {\bf BiHom-associative color algebra} $(\mathfrak{g},\mu,\epsilon,\alpha,\beta)$ consist of a $\Gamma$-graded vector space $\mathfrak{g}=\displaystyle\bigoplus\limits_{\gamma\in\Gamma}\mathfrak{g}_\gamma$, two even commuting linear maps $\alpha,\beta:\mathfrak{g}\to\mathfrak{g}$, an even bilinear map $\mu:\mathfrak{g}\times\mathfrak{g}\to\mathfrak{g}$ and a skew-symmetric bicharacter $\epsilon :\Gamma\times \Gamma\rightarrow {\bf \mathbb{K}^*}$ such that for all $x,y,z\in\mathcal{H}(\mathfrak{g})$,
\begin{align}
\mu(\alpha(x),\alpha(y))=\alpha(\mu(x,y)),\quad \mu(\beta(x),\beta(y))=\beta(\mu(x,y)) &\quad \text{multiplicativity},\label{asso-multip-cond}\\
ass_{\alpha,\beta}(x,y,z)=\mu(\alpha(x),\mu(y,z))-\mu(\mu(x,y),\beta(z))=0 &\quad  \epsilon\text{-BiHom-associativity}. \label{BiHom-ass-color-identity}
\end{align}
\end{defi}

\begin{defi} \label{nonmultBiHomasscolalg}
If the multiplicativity conditions \eqref{asso-multip-cond} in 
Definition \ref{dfn-BiHomasscoloralg} are not required, that is, if $\alpha$ and $\beta$ are just commuting linear maps, then we call this algebraic structure \textbf{possibly non-multiplicative BiHom-associative color algebra}, and if the multiplicativity \eqref{asso-multip-cond} is not satisfied (or satisfied), we refer to it as non-multiplicative (or multiplicative, respectively). If linear mappings
$\alpha$ and $\beta$ are not commuting, then we call it {\it a BiHom-associative color algebra with non-commuting twisting maps}.

A morphism $f:(\mathfrak{g}_1,\mu_1,\epsilon,\alpha_1,\beta_1)\to(\mathfrak{g}_2,\mu_2,\epsilon,\alpha_2,\beta_2)$ of the possibly non-multiplicative BiHom-associative color algebras is an even linear map $f:\mathfrak{g}_1\to\mathfrak{g}_2$ such that $\alpha_2\circ f=f\circ\alpha_1,\;\beta_2\circ f=f\circ\beta_1$ and $f\circ\mu_1=\mu_2\circ(f\otimes f)$.

A possibly non-multiplicative BiHom-associative color algebra $(\mathfrak{g},\mu,\epsilon,\alpha,\beta)$ is said to be BiHom-commutative if, for all $x,y\in\mathcal{H}(\mathfrak{g})$,
\begin{equation}\label{bihom-comm-color-ass-alg}
\mu(\beta(x),\alpha(y))=\epsilon(x,y)\mu(\beta(y),\alpha(x)).
\end{equation}\end{defi}

\begin{rmk}
If $\alpha=\beta$, possibly non-multiplicative BiHom-associative color algebras reduce to general Hom-associative color algebras and multiplicative Hom-associative color algebras when $\alpha=\beta$ is moreover multiplicative. For $\alpha=\beta=Id_\mathfrak{g}$, one recovers the associative color algebras.
\end{rmk}
\begin{pro}[\hspace{-0,1pt}\cite{Yaling-Yan}]\label{pro-twist-assoc-color}
Let $(\mathfrak{g},\mu,\epsilon)$ be an associative color algebra and $\alpha,\;\beta:\mathfrak{g}\to\mathfrak{g}$ two even commuting linear maps satisfying condition \eqref{asso-multip-cond}, then the tuple $(\mathfrak{g},\mu_{\alpha,\beta},\epsilon,\alpha,\beta)$ is a BiHom-associative color algebra, where,  for all $x,y\in\mathcal{H}(\mathfrak{g})$,
\begin{equation}\label{twist-bihom-ass}
\mu_{\alpha,\beta}(x,y)=\mu(\alpha(x),\beta(y)).
\end{equation}
\end{pro}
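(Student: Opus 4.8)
The plan is to verify directly the three requirements of Definition~\ref{dfn-BiHomasscoloralg} for the tuple $(\mathfrak{g},\mu_{\alpha,\beta},\epsilon,\alpha,\beta)$: that $\mu_{\alpha,\beta}$ is an even bilinear map, that $\alpha,\beta$ are even commuting linear maps which are multiplicative with respect to $\mu_{\alpha,\beta}$, and that the $\epsilon$-BiHom-associativity identity~\eqref{BiHom-ass-color-identity} holds. Evenness and bilinearity of $\mu_{\alpha,\beta}$ are immediate from those of $\mu$ together with the evenness of $\alpha$ and $\beta$, and the hypotheses already provide that $\alpha,\beta$ are even and commute; so only multiplicativity and BiHom-associativity call for computation.

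For multiplicativity I would compute, using $\alpha\beta=\beta\alpha$ and the compatibility~\eqref{asso-multip-cond} of $\alpha$ (resp. $\beta$) with $\mu$,
\[
\mu_{\alpha,\beta}(\alpha(x),\alpha(y))=\mu\big(\alpha^2(x),\beta\alpha(y)\big)=\mu\big(\alpha(\alpha(x)),\alpha(\beta(y))\big)=\alpha\big(\mu(\alpha(x),\beta(y))\big)=\alpha\big(\mu_{\alpha,\beta}(x,y)\big),
\]
and symmetrically $\mu_{\alpha,\beta}(\beta(x),\beta(y))=\beta(\mu_{\alpha,\beta}(x,y))$, which is precisely~\eqref{asso-multip-cond} for $\mu_{\alpha,\beta}$.

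For the $\epsilon$-BiHom-associativity I would expand both terms of $ass_{\alpha,\beta}$ formed with $\mu_{\alpha,\beta}$, repeatedly pushing $\alpha$ and $\beta$ inside $\mu$ via~\eqref{asso-multip-cond} and using $\alpha\beta=\beta\alpha$, to obtain
\[
\mu_{\alpha,\beta}(\alpha(x),\mu_{\alpha,\beta}(y,z))=\mu\big(\alpha^2(x),\mu(\alpha\beta(y),\beta^2(z))\big),\qquad
\mu_{\alpha,\beta}(\mu_{\alpha,\beta}(x,y),\beta(z))=\mu\big(\mu(\alpha^2(x),\alpha\beta(y)),\beta^2(z)\big).
\]
These two expressions coincide by the ordinary (untwisted) associativity of $\mu$ applied to the triple $\alpha^2(x),\alpha\beta(y),\beta^2(z)$, so their difference is zero, which is~\eqref{BiHom-ass-color-identity}. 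The bicharacter $\epsilon$ plays no role in this argument, since the BiHom-associativity identity as stated carries no $\epsilon$-factor; $\epsilon$ is simply inherited unchanged from $(\mathfrak{g},\mu,\epsilon)$ and is what places the construction inside the color-algebra framework.

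There is no real obstacle here: the whole argument is bookkeeping with the two commuting twisting maps. The single point requiring care is the asymmetric placement of $\alpha$ versus $\beta$ in $\mu_{\alpha,\beta}$ — one must consistently feed $\alpha$ to the left argument and $\beta$ to the right, and then invoke $\alpha\beta=\beta\alpha$ at exactly the right moment so that the two sides of the associativity identity line up with the same iterated twists.
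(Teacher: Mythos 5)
Your proof is correct: the multiplicativity check and the reduction of BiHom-associativity for $\mu_{\alpha,\beta}$ to the ordinary associativity of $\mu$ applied to $\alpha^2(x),\alpha\beta(y),\beta^2(z)$ is exactly the standard twisting argument, and the observation that $\epsilon$ plays no role in \eqref{BiHom-ass-color-identity} is accurate. The paper itself gives no proof for this proposition (it is quoted from \cite{Yaling-Yan}), so there is nothing further to compare; your computation is the expected one.
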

\begin{defi}
Let $(\mathfrak{g},\mu,\epsilon,\alpha,\beta)$ be a BiHom-associative color algebra. A left representation of $\mathfrak{g}$ (also called left $\mathfrak{g}$-module) is a quadruple $(V,\mathfrak l,\alpha_V,\beta_V)$ consisting of a $\Gamma$-graded vector space $V=\displaystyle\bigoplus\limits_{\gamma\in\Gamma}V_\gamma$, an even linear map 
$$\mathfrak l:\mathfrak{g}\otimes V\to V,\;x\otimes u\mapsto x\cdot u=\mathfrak l(x)u$$ and two even linear maps $\alpha_V,\beta_V:V\to V$, such that, for all $x,y\in\mathcal{H}(\mathfrak{g}),\;u\in\mathcal{H}(V)$,
\begin{align}
\alpha_V\circ\beta_V&=\beta_V\circ\alpha_V ,\label{cond-left-mod-asso-color1}\\
\alpha_V(\mathfrak{l}(x\otimes u))&=\alpha(x)\cdot\alpha_V(u),\label{cond-left-mod-asso-color2}\\
\beta_V(\mathfrak{l}(x\otimes u))&=\beta(x)\cdot\beta_V(u).\label{cond-left-mod-asso-color3}
\end{align}
A right representation $(V,\mathfrak r,\alpha_V,\beta_V)$ can be defined in a similar manner.
\end{defi}
\begin{defi}
Let $(\mathfrak{g},\mu,\epsilon,\alpha,\beta)$ be a BiHom-associative color algebra, $V$ be a $\Gamma$-graded vector space, and $\mathfrak l,\;\mathfrak r\;:\;\mathfrak{g}\rightarrow End(V)$,   $\alpha_V,\;\beta_V:V\rightarrow V$ be even linear maps. The tuple $(V,\mathfrak l,\mathfrak r,\alpha_V,\beta_V)$ is called  representation of $\mathfrak{g}$ (also called $\mathfrak{g}$-bimodule) if $(V,\mathfrak l,\alpha_V,\beta_V)$ is a left representation of $\mathfrak{g}$ and $(V,\mathfrak r,\alpha_V,\beta_V)$ is a right representation of $\mathfrak{g}$ such that, for all $x,\;y\in\mathcal{H}(\mathfrak{g})$ and $v\in\mathcal{H}(V)$,
\begin{eqnarray}
\mathfrak l(\alpha(x))\mathfrak l(y)(v)&=&\mathfrak l (\mu(x,y))(\beta_V(v))\label{cond-rep-BiH-ass1}\\
\mathfrak l(\alpha(x))\mathfrak r(y)(v)&=&\epsilon(x,y)\mathfrak r(\beta(y))\mathfrak l(x)(v)\label{cond-rep-BiH-ass2}\\
\mathfrak r(\mu(x,y))\alpha_V(v)&=&\epsilon(x,y)\mathfrak r(\beta(y))\mathfrak r(x)(v)\label{cond-rep-BiH-ass3}
\end{eqnarray}
\end{defi}
\begin{pro}\label{3-BiHom-ass-direct-sum}
Let $(V,\mathfrak l, \mathfrak r, \alpha_V,\beta_V )$ be a bimodule of a BiHom-associative color algebra $(\mathfrak{g},\mu,\epsilon,
\alpha,\beta)$. Then, the direct sum $\mathfrak{g}\oplus V$ of vector spaces $\mathfrak{g}$ and $V$ is turned into a BiHom-associative color algebra by defining the multiplications in $\mathfrak{g}\oplus V$ 
for all $x,y\in\mathcal{H}(\mathfrak{g})$ and $u,v\in\mathcal{H}(V)$ by
\begin{eqnarray}
\mu_{\mathfrak{g}\oplus V}((x+u),(y+v))&=&\mu(x,y)+\mathfrak l(x)v+\epsilon(y,u)\mathfrak r(y)u,\label{mu-direct-sum} \\
(\alpha\oplus\alpha_V)(x+u)&=&\alpha(x)+\alpha_V(u),\label{alpha-direct-sum}\\
(\beta\oplus\beta_V)(x+u)&=&\beta(x)+\beta_V(u).\label{beta-direct-sum}
\end{eqnarray}
This BiHom-associative color algebra, denoted by $\mathfrak{g}\ltimes_{\alpha,\beta}^{\mathfrak l,\mathfrak r} V$, is called semi-direct product of $(\mathfrak{g},\mu,\epsilon,
\alpha,\beta)$ and $(V,\mathfrak l, \mathfrak r, \alpha_V,\beta_V )$.
\end{pro}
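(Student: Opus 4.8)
The plan is to verify, one by one, the defining conditions of a BiHom-associative color algebra for the tuple $\big(\mathfrak{g}\oplus V,\ \mu_{\mathfrak{g}\oplus V},\ \epsilon,\ \alpha\oplus\alpha_V,\ \beta\oplus\beta_V\big)$, where $\mathfrak{g}\oplus V$ is equipped with the grading $(\mathfrak{g}\oplus V)_\gamma=\mathfrak{g}_\gamma\oplus V_\gamma$. The preliminary, purely formal, checks come first: evenness of $\mu_{\mathfrak{g}\oplus V}$ follows from evenness of $\mu$, $\mathfrak l$, $\mathfrak r$ (the factor $\epsilon(y,u)$ being a scalar), and evenness of $\alpha\oplus\alpha_V$, $\beta\oplus\beta_V$ is immediate from that of $\alpha,\beta,\alpha_V,\beta_V$. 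The maps $\alpha\oplus\alpha_V$ and $\beta\oplus\beta_V$ commute because $\alpha\beta=\beta\alpha$ on $\mathfrak{g}$ and $\alpha_V\beta_V=\beta_V\alpha_V$ on $V$ by \eqref{cond-left-mod-asso-color1}. Multiplicativity \eqref{asso-multip-cond} of the new product is checked componentwise: the $\mathfrak{g}$-component is \eqref{asso-multip-cond} for $(\mathfrak{g},\mu)$, and the $V$-component reduces to $\alpha_V(\mathfrak l(x)v)=\mathfrak l(\alpha(x))\alpha_V(v)$ together with the right-hand analogue $\alpha_V(\mathfrak r(y)u)=\mathfrak r(\alpha(y))\alpha_V(u)$, which are \eqref{cond-left-mod-asso-color2} for $\mathfrak l$ and for $\mathfrak r$ (evenness of $\alpha$ leaving the scalar $\epsilon(y,u)$ unchanged); the $\beta\oplus\beta_V$ case uses \eqref{cond-left-mod-asso-color3} and its right analogue.

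The substance of the proof is the $\epsilon$-BiHom-associativity \eqref{BiHom-ass-color-identity}. Writing $X=x+u$, $Y=y+v$, $Z=z+w$ with $x,y,z\in\mathcal H(\mathfrak{g})$ and $u,v,w\in\mathcal H(V)$, I would expand
$$ass_{\alpha\oplus\alpha_V,\,\beta\oplus\beta_V}(X,Y,Z)=\mu_{\mathfrak{g}\oplus V}\big((\alpha\oplus\alpha_V)(X),\,\mu_{\mathfrak{g}\oplus V}(Y,Z)\big)-\mu_{\mathfrak{g}\oplus V}\big(\mu_{\mathfrak{g}\oplus V}(X,Y),\,(\beta\oplus\beta_V)(Z)\big)$$
using \eqref{mu-direct-sum}–\eqref{beta-direct-sum}, and separate the outcome into its $\mathfrak{g}$-part and its $V$-part. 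The $\mathfrak{g}$-part is exactly $\mu(\alpha(x),\mu(y,z))-\mu(\mu(x,y),\beta(z))=ass_{\alpha,\beta}(x,y,z)$, which vanishes by \eqref{BiHom-ass-color-identity} for $(\mathfrak{g},\mu,\epsilon,\alpha,\beta)$. The $V$-part is a sum of terms that I would organize according to which of the three ``module'' variables $u$, $v$, $w$ they carry, since each occurrence of $\mathfrak l$ or $\mathfrak r$ acts on exactly one of them.

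These three groups are then eliminated separately: the terms linear in $w$ reduce to $\mathfrak l(\alpha(x))\mathfrak l(y)(w)-\mathfrak l(\mu(x,y))\beta_V(w)$, which is $0$ by \eqref{cond-rep-BiH-ass1}; the terms linear in $v$ combine, after rewriting $\mathfrak l(\alpha(x))\mathfrak r(z)(v)$ by means of \eqref{cond-rep-BiH-ass2}, into a single multiple of $\mathfrak r(\beta(z))\mathfrak l(x)(v)$ whose coefficient collapses upon using the bicharacter identities \eqref{cond-bic1}–\eqref{cond-bic3}; and the terms linear in $u$ combine, after rewriting $\mathfrak r(\mu(y,z))\alpha_V(u)$ by means of \eqref{cond-rep-BiH-ass3}, into a single multiple of $\mathfrak r(\beta(z))\mathfrak r(y)(u)$ whose coefficient again collapses by \eqref{cond-bic1}–\eqref{cond-bic3}. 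Putting the pieces together gives $ass_{\alpha\oplus\alpha_V,\,\beta\oplus\beta_V}(X,Y,Z)=0$, so all the axioms hold and $\mathfrak{g}\ltimes_{\alpha,\beta}^{\mathfrak l,\mathfrak r}V$ is a BiHom-associative color algebra.

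The only genuinely delicate point — and where I expect the main obstacle to lie — is the bookkeeping of commutation factors: each nested application of \eqref{mu-direct-sum} produces an $\epsilon$-coefficient that is a product of $\epsilon$-values of the degrees of $x,y,z,u,v,w$, and one must repeatedly invoke the bilinearity rules \eqref{cond-bic2}–\eqref{cond-bic3} and the inversion rule \eqref{cond-bic1} (remembering that a homogeneous element $x+u$ of $\mathfrak{g}\oplus V$ has $|x|=|u|$) in order to match these coefficients with the ones appearing on the right-hand sides of \eqref{cond-rep-BiH-ass2} and \eqref{cond-rep-BiH-ass3}. Everything else is a direct expansion.
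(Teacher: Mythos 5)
Your proposal follows essentially the same route as the paper's proof: expand both sides of the BiHom-associativity identity for $\mu_{\mathfrak{g}\oplus V}$, note that the $\mathfrak{g}$-component is exactly the BiHom-associativity of $(\mathfrak{g},\mu,\epsilon,\alpha,\beta)$, and identify the $V$-components carrying $w$, $v$ and $u$ with the bimodule conditions \eqref{cond-rep-BiH-ass1}--\eqref{cond-rep-BiH-ass3}, keeping track of the $\epsilon$-factors. The extra preliminary verifications you include (grading, evenness, commutation of $\alpha\oplus\alpha_V$ and $\beta\oplus\beta_V$, multiplicativity via \eqref{cond-left-mod-asso-color2}--\eqref{cond-left-mod-asso-color3}) are routine details the paper leaves implicit, so the two arguments coincide in substance.
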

\begin{proof}
Let $((x,y,z),(u,v,w))\in\mathcal{H}(\mathfrak g)^3\times\mathcal{H}(V)^3$. Then,
\begin{align*}
&\mu_{\mathfrak{g}\oplus V}\Big( \mu_{\mathfrak{g}\oplus V}(x+u,y+v),(\beta\oplus\beta_V)(z+w)\Big) \\ 
&= \mu_{\mathfrak{g}\oplus V}\Big(\mu(x,y)+\mathfrak l(x)v+\epsilon(y,u)\mathfrak r(y)u,\beta(z)+\beta_V(w)\Big)\\
&=\mu(\mu(x,y),\beta(z))+\mathfrak l(\mu(x,y))\beta_V(w)+\epsilon(x+y,z)\mathfrak r(\beta(z))\Big(\mathfrak l(x)v+\epsilon(y,u)\mathfrak r(y)u\Big), \\ 
&\mu_{\mathfrak{g}\oplus V}\Big((\alpha\oplus\alpha_V)(x+u), \mu_{\mathfrak{g}\oplus V}(y+v,z+w)\Big)\\
&=\mu(\alpha(x),\mu(y,z))+\mathfrak l(\alpha(x))\Big(\mathfrak l(y)w+\epsilon(z,v)\mathfrak r(z)v\Big)+\epsilon(u,y+z)\mathfrak r(\mu(y,z))\alpha_V(u).
\end{align*}
Then, the even product $\mu_{\mathfrak{g}\oplus V}$ defines on $\mathfrak{g}\oplus V$ a BiHom-associative color algebra structure if and only if, 
$$\mu_{\mathfrak{g}\oplus V}\Big((\alpha\oplus\alpha_V)(x+u), \mu_{\mathfrak{g}\oplus V}(y+v,z+w)\Big)=\mu_{\mathfrak{g}\oplus V}\Big( \mu_{\mathfrak{g}\oplus V}(x+u,y+v),(\beta\oplus\beta_V)(z+w)\Big),$$
which by using the fact that $(\mathfrak g,\mu,\alpha,\beta)$ is a BiHom-associative color algebra, and by identification of terms, is equivalent to 
\begin{eqnarray}
\mathfrak l(\mu(x,y))\beta_V(w) &=& \mathfrak l(\alpha(x))\mathfrak l(y)w,\\ \epsilon(x+y,z)\mathfrak r(\beta(z))\mathfrak l(x)v &=& \epsilon(z,v)\mathfrak l(\alpha(x))\mathfrak r(z)v, \\
\epsilon(x+y,z)\epsilon(y,u)\mathfrak r(\beta(z))\mathfrak r(y)u &=& \epsilon(u,y+z)\mathfrak r(\mu(y,z))\alpha_V(u),
\end{eqnarray}
which gives the result.
\end{proof}
\begin{defi}
A \textbf{Kupershmidt operator} on a BiHom-associative color algebra $(\mathfrak{g},\mu,\epsilon,
\alpha,\beta)$  with respect to a bimodule $(V,\mathfrak l, \mathfrak r, \alpha_V,\beta_V )$ is defined as an even linear map $T:V\to\mathfrak{g}$ satisfying, for all $u,v\in\mathcal{H}(V)$,
\begin{eqnarray}
& T\circ\alpha_V=\alpha\circ T,\quad  T\circ\beta_V=\beta\circ T,\label{cond-O-op-BiH-ass-color1} \\
& \mu(T(u),T(v)) = T\Big(\mathfrak l(T(u))v+\epsilon(u,v)\mathfrak r(T(v))u\Big). \label{cond-O-op-BiH-ass-color2}
\end{eqnarray}
\end{defi}
\begin{rmk}
A \textbf{Rota-Baxter operator} $\mathcal{R}$ of weight zero on a BiHom-associative color algebra $(\mathfrak{g},\mu,\epsilon,\alpha,\beta)$ is just a Kupershmidt operator associated with the bimodule $(\mathfrak{g},\mathfrak L,\mathfrak R,\alpha,\beta)$  where $\mathfrak L$ and $\mathfrak R$ are the left and right multiplication operators corresponding to the multiplication $\mu$, that is $\mathcal{R}$ commuting with $\alpha$ and $\beta$ and satisfying, for all $x,y\in\mathcal{H}(\mathfrak g)$,
\begin{equation}
\mu(\mathcal{R}(x),\mathcal{R}(y))=\mathcal{R}\Big(\mu(\mathcal{R}(x),y)+\mu(x,\mathcal{R}(y)\Big). \label{RB-op-BiH-ass-color}
\end{equation}
\end{rmk}
\begin{pro}\label{ind-BiH-ass-color-RB}
Let $\mathcal{R}:\mathfrak{g}\to\mathfrak{g}$ be a Rota-Baxter operator of weight zero on a BiHom associative color algebra $(\mathfrak{g},\mu,\epsilon,\alpha,\beta)$. Then, there exists a BiHom associative color algebra structure $\mu_\mathcal{R}$ on $\mathfrak{g}$ given for any $ x,y\in\mathcal{H}(\mathfrak{g})$ by
\begin{equation}\label{str-op-BiH-ass-color-ind}
\mu_\mathcal{R}(x,y)= \mu(\mathcal{R}(x),y)+\mu(x,\mathcal{R}(y)).
\end{equation}
\end{pro}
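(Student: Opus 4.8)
The plan is to verify the three conditions of Definition \ref{dfn-BiHomasscoloralg} for the tuple $(\mathfrak{g},\mu_\mathcal{R},\epsilon,\alpha,\beta)$: that $\mu_\mathcal{R}$ is an even bilinear map, that $\alpha$ and $\beta$ are multiplicative with respect to $\mu_\mathcal{R}$, and that $\mu_\mathcal{R}$ satisfies the $\epsilon$-BiHom-associativity identity \eqref{BiHom-ass-color-identity}. Evenness is immediate: since $\mathcal{R}$ is even and $\mu$ is even, both summands $\mu(\mathcal{R}(x),y)$ and $\mu(x,\mathcal{R}(y))$ lie in $\mathfrak{g}_{|x|+|y|}$ for homogeneous $x,y$, hence so does $\mu_\mathcal{R}(x,y)$. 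Multiplicativity follows from $\mathcal{R}\alpha=\alpha\mathcal{R}$, $\mathcal{R}\beta=\beta\mathcal{R}$ and the multiplicativity \eqref{asso-multip-cond} of $\mu$: for instance $\mu_\mathcal{R}(\alpha(x),\alpha(y))=\mu(\mathcal{R}(\alpha(x)),\alpha(y))+\mu(\alpha(x),\mathcal{R}(\alpha(y)))=\mu(\alpha(\mathcal{R}(x)),\alpha(y))+\mu(\alpha(x),\alpha(\mathcal{R}(y)))=\alpha(\mu(\mathcal{R}(x),y))+\alpha(\mu(x,\mathcal{R}(y)))=\alpha(\mu_\mathcal{R}(x,y))$, and the identity for $\beta$ is obtained in the same way.

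The substantive part is the associativity identity. I would expand $\mu_\mathcal{R}(\alpha(x),\mu_\mathcal{R}(y,z))-\mu_\mathcal{R}(\mu_\mathcal{R}(x,y),\beta(z))$ directly from the definition \eqref{str-op-BiH-ass-color-ind}. The essential simplification is the weight-zero Rota-Baxter identity \eqref{RB-op-BiH-ass-color}, used in the form $\mathcal{R}\big(\mu(\mathcal{R}(a),b)+\mu(a,\mathcal{R}(b))\big)=\mu(\mathcal{R}(a),\mathcal{R}(b))$, applied with $(a,b)=(y,z)$ to collapse the inner $\mathcal{R}$ in the first term and with $(a,b)=(x,y)$ in the second, together with $\mathcal{R}\alpha=\alpha\mathcal{R}$ and $\mathcal{R}\beta=\beta\mathcal{R}$. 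After these substitutions the first term becomes the sum of $\mu(\alpha(\mathcal{R}(x)),\mu(\mathcal{R}(y),z))$, $\mu(\alpha(\mathcal{R}(x)),\mu(y,\mathcal{R}(z)))$ and $\mu(\alpha(x),\mu(\mathcal{R}(y),\mathcal{R}(z)))$, while the second becomes the sum of $\mu(\mu(\mathcal{R}(x),\mathcal{R}(y)),\beta(z))$, $\mu(\mu(\mathcal{R}(x),y),\beta(\mathcal{R}(z)))$ and $\mu(\mu(x,\mathcal{R}(y)),\beta(\mathcal{R}(z)))$.

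Finally I would note that each of the three summands in the first expression cancels exactly one summand in the second by a single application of the $\epsilon$-BiHom-associativity \eqref{BiHom-ass-color-identity} of $\mu$: apply it to $(\mathcal{R}(x),\mathcal{R}(y),z)$, to $(\mathcal{R}(x),y,\mathcal{R}(z))$, and to $(x,\mathcal{R}(y),\mathcal{R}(z))$ respectively. Hence $ass_{\alpha,\beta}^{\mu_\mathcal{R}}(x,y,z)=0$ and $(\mathfrak{g},\mu_\mathcal{R},\epsilon,\alpha,\beta)$ is a BiHom-associative color algebra. I do not expect a genuine obstacle here; the only point demanding care is the bookkeeping in the expansion — making sure the Rota-Baxter identity hits precisely the intended sub-expression and that the three pairs of terms match correctly under BiHom-associativity. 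It is worth observing that the bicharacter $\epsilon$ is inert in this computation, since the BiHom-associativity identity, unlike BiHom-commutativity \eqref{bihom-comm-color-ass-alg}, carries no sign; this is the color-graded analogue of the classical fact that a weight-zero Rota-Baxter operator on an associative algebra produces the doubled product, and the proof is essentially identical.
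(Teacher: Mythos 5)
Your proposal is correct and follows essentially the same route as the paper: expand via \eqref{str-op-BiH-ass-color-ind}, collapse the nested $\mathcal{R}$-terms with the weight-zero Rota--Baxter identity \eqref{RB-op-BiH-ass-color}, and match the three resulting summands pairwise through the $\epsilon$-BiHom-associativity \eqref{BiHom-ass-color-identity} applied to $(\mathcal{R}(x),\mathcal{R}(y),z)$, $(\mathcal{R}(x),y,\mathcal{R}(z))$ and $(x,\mathcal{R}(y),\mathcal{R}(z))$. Your additional explicit checks of evenness and multiplicativity, and the observation that $\epsilon$ plays no role, are harmless refinements of the same argument.
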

\begin{proof}
For $x,y,z\in\mathcal{H}(\mathfrak g)$,
\begin{eqnarray*}
\mu_\mathcal{R}(\alpha(x),\mu_\mathcal{R}(y,z))&=&\mu_\mathcal{R}\big(\alpha(x),\mu(\mathcal{R}(y),z\big)+\mu\big(y,\mathcal{R}(z))\big)\\&=&\mu_\mathcal{R}\big(\alpha(x),\mu(\mathcal{R}(y),z)\big)+\mu_\mathcal{R}\big(\alpha(x),\mu(y,\mathcal{R}(z))\big)\\&=&\mu\big(\mathcal{R}(\alpha(x)),\mu(\mathcal{R}(y),z)\big)+\underbrace{\mu\big(\alpha(x),\mathcal{R}(\mu(\mathcal{R}(y),z))\big)}_{a_1}\\&+&\mu\big(\mathcal{R}(\alpha(x)),\mu(y,\mathcal{R}(z))\big)+\underbrace{\mu\big(\alpha(x),\mathcal{R}(\mu(y,\mathcal{R}(z)))\big)}_{a_2}.
\end{eqnarray*}
Since $\mathcal{R}$ is a Rota-Baxter operator of weight $0$ on $\mathfrak g$, we obtain
\begin{eqnarray*}
a_1+a_2 &=& \mu(\alpha(x),\mu(\mathcal{R}(y),\mathcal{R}(z))), \text{which implies that}\\
\mu_\mathcal{R}\Big(\alpha(x),\mu_\mathcal{R}(y,z)\Big)&=&\mu\Big(\mathcal{R}(\alpha(x)),\mu(\mathcal{R}(y),z)\Big)+\mu\Big(\mathcal{R}(\alpha(x)),\mu(y,\mathcal{R}(z))\Big)\\&+&\mu\Big(\alpha(x),\mu(\mathcal{R}(y),\mathcal{R}(z))\Big)\\&=&\mu\Big(\mu(\mathcal{R}(x),\mathcal{R}(y)),\beta(z)\Big)+\mu\Big(\mu(\mathcal{R}(x),y),\beta(\mathcal{R}(z))\Big)\\&+&\mu\Big(\mu(x,\mathcal{R}(y)),\beta(\mathcal{R}(z))\Big) \\&=&\mu\Big(\mathcal{R}(\mu_{\mathcal{R}}(x,y)),\beta(z)\Big)+\mu\Big(\mu_{\mathcal{R}}(x,y),\mathcal{R}(\beta(z))\Big)\\&=& \mu_{\mathcal{R}}\big(\mu_{\mathcal{R}}(x,y),\beta(z)\big),
\end{eqnarray*}
which proves the result.
\end{proof}
\begin{defi}
{\bf $3$-Lie color algebras} are $\Gamma$-graded vector spaces $\mathfrak{g}=\displaystyle\bigoplus\limits_{\gamma\in\Gamma}\mathfrak{g}_\gamma$ equipped with an even trilinear map $[\cdot,\cdot,\cdot]:\mathfrak{g}\times\mathfrak{g}\times\mathfrak{g}\to\mathfrak{g}$ and a commutation factor (skew-symmetric bicharacter) $\epsilon:\Gamma\times\Gamma\to\mathbb{K}^*=\mathbb{K}\setminus \left\{0\right\}$ such that for any $x,y,z,t,u\in\mathcal{H}(\mathfrak{g})$,
\begin{align}
|[x,y,z]|&=|x|+|y|+|z|,\label{cond-hom}\\
[x,y,z]&=-\epsilon(x,y)[y,x,z]=-\epsilon(y,z)[x,z,y],\quad (\epsilon\text{-skew-symmetry})\label{ColorSkewSym} \\
[x,y,[z,t,u]]&=[[x,y,z],t,u]+\epsilon(x+y,z)[z,[x,y,t],u]\label{color-Jacobi-identity}\\
&+\epsilon(x+y,z+t)[z,t,[x,y,u]], \quad (\epsilon\text{-fundamental identity}). \nonumber
\end{align}
\end{defi}
\begin{defi}\label{Color-3-HomLieAlg}
{\it \bf{A possibly non-multiplicative $3$-Hom-Lie color algebra}} is a quadruple
$(\mathfrak{g},[\cdot,\cdot,\cdot],\epsilon,\alpha)$ consisting of a
$\Gamma$-graded vector space $\mathfrak{g}$, a skew-symmetric bicharacter $\epsilon$, an even trilinear map
$[\cdot,\cdot,\cdot]:\mathfrak{g}\times\mathfrak{g}\times\mathfrak{g}\rightarrow\mathfrak{g}$ {\rm (}that is $([\mathfrak{g}_a,\mathfrak{g}_b,\mathfrak{g}_c]\subseteq\mathfrak{g}_{a+b+c})${\rm )}
and an even linear map $\alpha:\mathfrak{g}\rightarrow\mathfrak{g}$ such that for all $x,y,z,u,v\in\mathcal{H}(\mathfrak{g})$,
\begin{align}
&  [x,y,z]=-\epsilon(x,y)[y,x,z],\;(\epsilon\text{-skew-symmetry})\label{epsilon-antisymm}\\
&  [\alpha(x), \alpha(y), [z,u,v]]=\epsilon(x+y+z,u+v)[ \alpha(u),\alpha(v),[x,y,z]]\label{epsilon-Hom-Nambu}\\
&- \epsilon(x+y,z+v)\epsilon(u,v)
[\alpha(z),\alpha(v),[x,y,u]]\nonumber\\
&+\epsilon(x+y,z+u)[\alpha(z),\alpha(u), [x,y,v]],\; (\epsilon\text{-}3\text{-Hom-Jacobi identity}).\nonumber
\end{align}

{\bf A $3$-Hom-Lie color algebra}
$(\mathfrak{g},[\cdot,\cdot,\cdot],\epsilon,\alpha)$ is said to be multiplicative if it satisfies
$$\alpha([x,y,z])=[\alpha(x),\alpha(y),\alpha(z)],\;\forall x,y,z\in\mathcal{H}(\mathfrak{g}).$$
A multiplicative $3$-Hom-Lie color algebra $(\mathfrak{g},[\cdot,\cdot,\cdot],\epsilon,\alpha)$  is {\bf regular} if $\alpha$  is an even algebra automorphism.
\end{defi}
\begin{rmk}
If $\alpha=id_{\mathfrak{g}}$, we recover $3$-Lie color algebras structures.    
\end{rmk}
\begin{defi}\label{Color-3-BiHomLieAlg}
{\it \bf{A possibly non-multiplicative $3$-BiHom-Lie color algebra}} 
$(\mathfrak{g},[\cdot,\cdot,\cdot],\epsilon,\alpha,\beta)$ consists of a
$\Gamma$-graded vector space $\mathfrak{g}$, a skew-symmetric bicharacter $\epsilon$, an even trilinear map
$[\cdot,\cdot,\cdot]:\mathfrak{g}\times\mathfrak{g}\times\mathfrak{g}\rightarrow\mathfrak{g}$ 
and two even linear maps $\alpha,\beta:\mathfrak{g}\rightarrow\mathfrak{g}$ such that for all $x,y,z,u,v\in\mathcal{H}(\mathfrak{g})$,
\begin{align}
&  [\beta(x),\beta(y),\alpha(z)]=-\epsilon(x,y)[\beta(y),\beta(x),\alpha(z)],\label{Bihom-epsilon-symm1}\\
&  [\beta(x),\beta(y),\alpha(z)]=-\epsilon(y,z)[\beta(x),\beta(z),\alpha(y)],\label{Bihom-epsilon-symm2}\\
&  [\beta^2(x), \beta^2(y), [\beta(z),\beta(u),\alpha(v)]]=\epsilon(x+y+z,u+v)[ \beta^2(u),\beta^2(v),[\beta(x),\beta(y),\alpha(z)]]\label{epsilon-BiHom-Nambu}\\
&- \epsilon(x+y,z+v)\epsilon(u,v)
[\beta^2(z),\beta^2(v),[\beta(x),\beta(y),\alpha(u)]]\nonumber\\
&+\epsilon(x+y,z+u)[\beta^2(z),\beta^2(u), [\beta(x),\beta(y),\alpha(v)]],\; (\epsilon\text{-}3\text{-BiHom-Jacobi identity}).\nonumber
\end{align}
The conditions \eqref{Bihom-epsilon-symm1} and \eqref{Bihom-epsilon-symm2} are called  $\epsilon$-BiHom-skew-symmetry of $[\cdot,\cdot,\cdot]$.
\end{defi}

\noindent \begin{rmk}
\begin{enumerate}[label=\upshape{\arabic*)}, ref=\upshape{\arabic*}, labelindent=*, leftmargin=*]
\item If $\alpha=\beta$ where $\alpha$ is surjective, we recover possibly non-multiplicative 3-Hom-Lie color algebras structures.
\item The $3$-Lie-color algebras can be viewed as $3$-BiHom-Lie color algebras with $\alpha=\beta=id_\mathfrak{g}$   
because the $\epsilon$-$3$-BiHom-Jacobi identity reduces to the $\epsilon$-$3$-Jacobi identity when $\alpha=\beta=id_\mathfrak{g}$.
\item If $\epsilon(x,y)=1$ or $\epsilon(x,y)=(-1)^{|x||y|}$, then the $3$-BiHom-Lie color algebra is a classical $3$-BiHom-Lie algebra or $3$-BiHom-Lie superalgebra.
\end{enumerate}
\end{rmk}
\begin{defi} \label{def-3-BiHom-color-alg}
{\bf $3$-BiHom-Lie color algebra}
$(\mathfrak{g},[\cdot,\cdot,\cdot],\epsilon,\alpha,\beta)$ is said to be multiplicative when
$\alpha$ and $\beta$ satisfy $\alpha\circ\beta=\beta\circ\alpha$, and for all $x,\;y,\;z\in\mathcal{H}(\mathfrak{g})$,
$$\alpha([x,y,z])=[\alpha(x),\alpha(y),\alpha(z)], \quad \beta([x,y,z])=[\beta(x),\beta(y),\beta(z)].$$
A multiplicative $3$-BiHom-Lie color algebra $(\mathfrak{g},[\cdot,\cdot,\cdot],\epsilon,\alpha,\beta)$  is {\bf regular} if $\alpha$ and $\beta$ are even algebra automorphisms.
\end{defi}
\begin{ex}
Let $\Gamma=\mathbb{Z}_2,\;\epsilon(i,j)=(-1)^{ij}$. Let $\mathfrak{g} = \mathfrak{g}_0\oplus\mathfrak{g}_1$ where, $\mathfrak{g}_0=<e_1,e_2>$ and $\mathfrak{g}_1=<e_3>$.
Define the even linear maps $\alpha,\beta:\mathfrak{g}\to\mathfrak{g}$ by the following matrices
$$\alpha=\left(
\begin{array}{ccc}
0 & 1 & 0  \\
0 & 1 & 0  \\
0 & 0 & 1  \\
\end{array}\right),\;\;\;~~~~\beta=\left(
\begin{array}{ccc}
0 & -1 & 0  \\
0 & -1 & 0  \\
0 & 0 & 1  \\
\end{array}\right).$$
Consider the ternary bracket $[\cdot,\cdot,\cdot]:\mathfrak{g}\times\mathfrak{g}\times\mathfrak{g}\to\mathfrak{g}$ defined by  $[e_1,e_2,e_3]=2e_3$, where the other brackets are zero. Then $(\mathfrak{g}, [\cdot, \cdot,\cdot],\epsilon,\alpha,\beta)$ is a non-multiplicative $3$-BiHom-Lie color algebra.
\end{ex}

A morphism $f:(\mathfrak{g},[\cdot,\cdot,\cdot],\epsilon,\alpha,\beta)\to(\mathfrak{g}',[\cdot,\cdot,\cdot]',\epsilon,\alpha',\beta')$ of $3$-BiHom-Lie color algebras is an even linear map $f:\mathfrak{g}\to\mathfrak{g}'$ such that $f\circ\alpha=\alpha'\circ f,\;f\circ\beta=\beta'\circ f$ and $f([x,y,z])=[f(x),f(y),f(z)]'$ for all $x,y,z\in\mathcal{H}(\mathfrak{g})$.
\begin{thm}\label{thm-twist-assoc-color}
If $(\mathfrak{g},[\cdot,\cdot,\cdot],\epsilon)$ be a $3$-Lie color algebra and $\alpha,\beta:\mathfrak{g}\to\mathfrak{g}$ are two even commuting algebra morphisms on $\mathfrak{g}$, then $(\mathfrak{g},[\cdot,\cdot,\cdot]_{\alpha,\beta},\epsilon,\alpha,\beta)$ is a multiplicative $3$-BiHom-Lie color algebra, where for all $x,y,z\in\mathcal{H}(\mathfrak{g})$,
\begin{equation}\label{twist-3-BiHom-Lie}
[x,y,z]_{\alpha,\beta}=[\alpha(x),\alpha(y),\beta(z)].
\end{equation}
\end{thm}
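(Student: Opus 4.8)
The plan is to check, in order, the three requirements in Definitions \ref{Color-3-BiHomLieAlg} and \ref{def-3-BiHom-color-alg}: multiplicativity of the new bracket under $\alpha$ and $\beta$ (commutativity $\alpha\beta=\beta\alpha$ being part of the hypothesis), the $\epsilon$-BiHom-skew-symmetry \eqref{Bihom-epsilon-symm1}--\eqref{Bihom-epsilon-symm2}, and the $\epsilon$-$3$-BiHom-Jacobi identity \eqref{epsilon-BiHom-Nambu}. Throughout I will use two elementary facts: since $\alpha,\beta$ are even, $|\alpha(w)|=|\beta(w)|=|w|$ for homogeneous $w$, so any composite of $\alpha$ and $\beta$ leaves all values of $\epsilon$ unchanged; and since $\alpha\beta=\beta\alpha$ and both are algebra morphisms of $[\cdot,\cdot,\cdot]$, any composite of them commutes with the bracket and with itself.

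For multiplicativity: $\alpha([x,y,z]_{\alpha,\beta})=\alpha([\alpha(x),\alpha(y),\beta(z)])=[\alpha^2(x),\alpha^2(y),\alpha\beta(z)]$, while $[\alpha(x),\alpha(y),\alpha(z)]_{\alpha,\beta}=[\alpha^2(x),\alpha^2(y),\beta\alpha(z)]$, and these agree because $\alpha\beta=\beta\alpha$; the argument for $\beta$ is identical. For skew-symmetry, I first rewrite $[\beta(x),\beta(y),\alpha(z)]_{\alpha,\beta}=[\alpha\beta(x),\alpha\beta(y),\beta\alpha(z)]=[\alpha\beta(x),\alpha\beta(y),\alpha\beta(z)]$; then \eqref{Bihom-epsilon-symm1} and \eqref{Bihom-epsilon-symm2} follow from the two instances of $\epsilon$-skew-symmetry \eqref{ColorSkewSym} of the original bracket applied to $\alpha\beta(x),\alpha\beta(y),\alpha\beta(z)$, using $\epsilon(\alpha\beta(x),\alpha\beta(y))=\epsilon(x,y)$ and (for \eqref{Bihom-epsilon-symm2}) the equality $\beta\alpha(\cdot)=\alpha\beta(\cdot)$ to match slots.

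The heart of the argument is \eqref{epsilon-BiHom-Nambu}. Set $A:=\alpha\beta^2$, which is even. The key reduction is that for all homogeneous $a,b,c,d,e$,
\[
[\beta^2(a),\beta^2(b),[\beta(c),\beta(d),\alpha(e)]_{\alpha,\beta}]_{\alpha,\beta}=[A(a),A(b),[A(c),A(d),A(e)]],
\]
which I obtain by first noting $[\beta(c),\beta(d),\alpha(e)]_{\alpha,\beta}=[\alpha\beta(c),\alpha\beta(d),\alpha\beta(e)]$, then applying the outer twisted bracket to get $[\alpha\beta^2(a),\alpha\beta^2(b),\beta([\alpha\beta(c),\alpha\beta(d),\alpha\beta(e)])]$, and finally pushing $\beta$ through the inner bracket via the morphism property. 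Applying this to each of the four nested brackets in \eqref{epsilon-BiHom-Nambu} with the substitutions made there, and using that $A$ preserves degrees so that none of the $\epsilon$-coefficients change, the desired identity becomes exactly
\[
[A(x),A(y),[A(z),A(u),A(v)]]=\epsilon(x+y+z,u+v)[A(u),A(v),[A(x),A(y),A(z)]]
\]
\[
-\epsilon(x+y,z+v)\epsilon(u,v)[A(z),A(v),[A(x),A(y),A(u)]]+\epsilon(x+y,z+u)[A(z),A(u),[A(x),A(y),A(v)]],
\]
i.e.\ the identity \eqref{epsilon-Hom-Nambu} with $\alpha=\mathrm{id}$ evaluated at $A(x),\dots,A(v)$.

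It then remains to observe that in any $3$-Lie color algebra the identity \eqref{epsilon-Hom-Nambu} with $\alpha=\mathrm{id}$ is an equivalent reformulation of the $\epsilon$-fundamental identity \eqref{color-Jacobi-identity}: starting from \eqref{color-Jacobi-identity} with $t,u$ renamed $u,v$, one uses $\epsilon$-skew-symmetry \eqref{ColorSkewSym} to move the bracket $[x,y,\cdot]$ into the last argument of the summand $[[x,y,z],u,v]$ (yielding the coefficient $\epsilon(x+y+z,u+v)$ after \eqref{cond-bic2}) and of $[z,[x,y,u],v]$ (yielding $\epsilon(x+y,z+v)\epsilon(u,v)$ after \eqref{cond-bic2}), the third summand being left unchanged; I would record this short rearrangement explicitly. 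Applying this reformulation to the elements $A(x),\dots,A(v)$ of the $3$-Lie color algebra $(\mathfrak g,[\cdot,\cdot,\cdot],\epsilon)$ closes the proof. The only real obstacle is the bookkeeping in the key reduction — making sure that both the inner and the outer twisted brackets collapse to the single renormalized argument $A(\cdot)=\alpha\beta^2(\cdot)$, which is precisely where commutativity $\alpha\beta=\beta\alpha$ and the morphism hypotheses on $\alpha,\beta$ enter — and verifying that evenness of $\alpha,\beta$ prevents any spurious $\epsilon$-factor from appearing.
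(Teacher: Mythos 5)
Your proposal is correct and follows essentially the same route as the paper: reduce every nested twisted bracket to $\alpha\beta^{2}$ applied to the corresponding untwisted nested bracket (using that $\alpha,\beta$ are commuting even morphisms), and then invoke the $\epsilon$-fundamental identity of the original $3$-Lie color algebra. The only difference is that you spell out explicitly the skew-symmetry rearrangement showing that \eqref{epsilon-Hom-Nambu} with $\alpha=\mathrm{id}$ is equivalent to \eqref{color-Jacobi-identity}, a step the paper leaves implicit.
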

\begin{proof}
It is easy to see that, for all $x,y,z\in\mathcal{H(\mathfrak{g})}$,
\begin{align}
[\beta(x),\beta(y),\alpha(z)]_{\alpha,\beta} &=-\epsilon(x,y)[\beta(y),\beta(x),\alpha(z)]_{\alpha,\beta}\\ {} [\beta(x),\beta(y),\alpha(z)]_{\alpha,\beta} &=-\epsilon(y,z)[\beta(x),\beta(z),\alpha(y)]_{\alpha,\beta}.
\end{align}
For $x,y,z,t,u\in\mathcal{H(\mathfrak{g})}$,
\begin{eqnarray*}
[\beta^2(x),\beta^2(y),[\beta(z),\beta(t),\alpha(u)]_{\alpha,\beta}]_{\alpha,\beta}&=&
[\alpha\beta^2(x),\alpha\beta^2(y),[\beta^2(z),\beta^2(t),\alpha\beta(u)]_{\alpha,\beta}]\\
&=&[\alpha\beta^2(x),\alpha\beta^2(y),[\alpha\beta^2(z),\alpha\beta^2(t),\alpha\beta^2(u)]]\\
&=&\alpha\beta^2\Big([x,y,[z,t,u]]\Big),
\end{eqnarray*}
the $\epsilon$-$3$-BiHom-Jacobi identity is satisfied.
\end{proof}
\begin{defi}
Let $(\mathfrak{g},[\cdot,\cdot,\cdot],\epsilon,\alpha,\beta)$ be a $3$-Bihom-Lie color algebra. A representation of $\mathfrak{g}$ on a $\Gamma$-graded vector space $V$  is a triple $(\rho,\alpha_V,\beta_V)$, where  $\alpha_V,~\beta_V\in End(V)$ are two commuting linear maps, and
$\rho:\mathfrak{g}\times\mathfrak{g}\longrightarrow End(V)$ is an even $\epsilon$-skewsymmetric bilinear map, such that for all $u,v,x,y\in\mathcal{H}(\mathfrak{g})$,
\begin{align}
&\rho(\alpha(u),\alpha(v))\circ \alpha_V=\alpha_V\circ\rho(u,v),\label{cond-rep-3-BiHom-Lie1}\\
&\rho(\beta(u),\beta(v))\circ\beta_V=\beta_V\circ\rho(u,v),\label{cond-rep-3-BiHom-Lie2}\\
&\rho(\alpha\beta(u),\alpha\beta(v))\circ\rho(x,y)=\epsilon(x+y,u+v)\rho(\beta(x),\beta(y))\circ\rho(\alpha(u),\alpha(v))\label{cond-rep-3-BiHom-Lie3}\\
&\quad +\rho([\beta(u),\beta(v),x],\beta(y))\circ\beta_V+\epsilon(x,u+v) \rho(\beta(x),[\beta(u),\beta(v),y])\circ\beta_V,\nonumber\\
&\rho([\beta(u),\beta(v),x],\beta(y))\circ\beta_V
=\epsilon(u,x+v)\rho(\alpha\beta(v),\beta(x))\circ\rho(\alpha(u),y)\label{cond-rep-3-BiHom-Lie4}\\
&\quad +\epsilon(x,u+v)\rho(\beta(x),\alpha\beta(u))\circ\rho(\alpha(v),y)+\rho((\alpha\beta(u),\alpha\beta(v))\circ\rho(x,y).\nonumber
\end{align}
Such a representation of $\mathfrak{g}$ is denoted by $(V,\rho,\alpha_V,\beta_V)$, We also say that, $V$ is a $\mathfrak{g}$-module.
\end{defi}

\begin{pro}\label{struc-3-BiHom-semidirect}
Let $(V,\rho,\alpha_V,\beta_V)$ be a representation of a multiplicative $3$-Bihom-Lie color algebra  $(\mathfrak{g},[\cdot,\cdot,\cdot],\epsilon,\alpha,\beta)$,  where $\alpha$ and $\beta_V$ are bijective. 

Then, $\mathfrak{g}\ltimes_\rho V:=(\mathfrak{g}\oplus V,[\cdot,\cdot,\cdot]_{\mathfrak{g}\oplus V},\epsilon,\alpha\oplus\alpha_V,\beta\oplus\beta_V)$ is a multiplicative $3$-Bihom-Lie color algebra, where $\alpha\oplus\alpha_V,~\beta\oplus\beta_V:\mathfrak{g}\oplus V\longrightarrow\mathfrak{g}\oplus V$ are defined by $(\alpha\oplus\alpha_V)(x+u)=\alpha(x)+\alpha_V(u)$ and $(\beta\oplus\beta_V)(x+u)=\beta(x)+\beta_V(u),$ and the bracket
$[\cdot,\cdot,\cdot]_{\mathfrak{g}\oplus V}$ is defined for all $x,y,z\in \mathcal{H}(\mathfrak{g})$ and $u,v,w\in\mathcal{H}(V)$ by 
\begin{align}
[x+u,y+v,z+w]_{\mathfrak{g}\oplus V}&=[x,y,z]+\rho(x,y)(w)-\epsilon(y,z)\rho(x,\alpha^{-1}\beta(z))(\alpha_V\beta_V^{-1}(v))\label{crochet-direct-sum}\\
&\hspace{3cm} +\epsilon(x,y+z)\rho(y,\alpha^{-1}\beta(z))(\alpha_V\beta_V^{-1}(u)).\nonumber
\end{align} 
We call $\mathfrak{g}\ltimes_\rho V$ the semi-direct product of the
$3$-Bihom-Lie color algebra $(\mathfrak{g},[\cdot,\cdot,\cdot],\alpha,\beta)$ and $V$.
\end{pro}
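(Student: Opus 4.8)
The plan is to verify, in order, the four groups of requirements in Definition~\ref{Color-3-BiHomLieAlg} and Definition~\ref{def-3-BiHom-color-alg} for the tuple $\mathfrak{g}\ltimes_\rho V=(\mathfrak{g}\oplus V,[\cdot,\cdot,\cdot]_{\mathfrak{g}\oplus V},\epsilon,\alpha\oplus\alpha_V,\beta\oplus\beta_V)$: commutativity of the two twisting maps, $\epsilon$-BiHom-skew-symmetry of $[\cdot,\cdot,\cdot]_{\mathfrak{g}\oplus V}$, multiplicativity of $\alpha\oplus\alpha_V$ and $\beta\oplus\beta_V$, and finally the $\epsilon$-$3$-BiHom-Jacobi identity \eqref{epsilon-BiHom-Nambu}. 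The first point is immediate: $(\alpha\oplus\alpha_V)\circ(\beta\oplus\beta_V)=(\alpha\beta)\oplus(\alpha_V\beta_V)=(\beta\alpha)\oplus(\beta_V\alpha_V)=(\beta\oplus\beta_V)\circ(\alpha\oplus\alpha_V)$, using $\alpha\beta=\beta\alpha$ on $\mathfrak{g}$ and the commutativity of $\alpha_V,\beta_V$. Note that the bijectivity of $\alpha$ and $\beta_V$ is what makes the twists $\alpha^{-1}\beta$ and $\alpha_V\beta_V^{-1}$ in \eqref{crochet-direct-sum} well defined in the first place.

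For the $\epsilon$-BiHom-skew-symmetry I would substitute $X=x+u$, $Y=y+v$, $Z=z+w$ into \eqref{crochet-direct-sum} and compare $[(\beta\oplus\beta_V)(X),(\beta\oplus\beta_V)(Y),(\alpha\oplus\alpha_V)(Z)]_{\mathfrak{g}\oplus V}$ with $-\epsilon(x,y)[(\beta\oplus\beta_V)(Y),(\beta\oplus\beta_V)(X),(\alpha\oplus\alpha_V)(Z)]_{\mathfrak{g}\oplus V}$ and, separately, with $-\epsilon(y,z)[(\beta\oplus\beta_V)(X),(\beta\oplus\beta_V)(Z),(\alpha\oplus\alpha_V)(Y)]_{\mathfrak{g}\oplus V}$. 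On the $\mathfrak{g}$-component this is exactly \eqref{Bihom-epsilon-symm1}--\eqref{Bihom-epsilon-symm2} for $\mathfrak{g}$; on the $V$-component it reduces, after collecting the bicharacter factors produced by \eqref{cond-bic1}--\eqref{cond-bic3} under the relevant transposition of arguments, to the $\epsilon$-skew-symmetry of the bilinear map $\rho$ together with \eqref{crochet-direct-sum} read off with the arguments permuted; the twists $\alpha^{-1}\beta$ and $\alpha_V\beta_V^{-1}$ enter the matching terms in the same way, so they cause no obstruction. Multiplicativity is then a short check: applying $\alpha\oplus\alpha_V$ to \eqref{crochet-direct-sum}, the $\mathfrak{g}$-part uses $\alpha([x,y,z])=[\alpha(x),\alpha(y),\alpha(z)]$, while each of the three $V$-terms uses \eqref{cond-rep-3-BiHom-Lie1}, i.e. $\alpha_V\circ\rho(x,y)=\rho(\alpha(x),\alpha(y))\circ\alpha_V$, together with the fact that $\alpha$ commutes with $\alpha^{-1}\beta$ and $\alpha_V$ commutes with $\alpha_V\beta_V^{-1}$; the argument for $\beta\oplus\beta_V$ is identical using \eqref{cond-rep-3-BiHom-Lie2}.

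The heart of the proof is the $\epsilon$-$3$-BiHom-Jacobi identity \eqref{epsilon-BiHom-Nambu} for $[\cdot,\cdot,\cdot]_{\mathfrak{g}\oplus V}$. I would evaluate both sides on five homogeneous elements $x_i+u_i$ ($x_i\in\mathcal H(\mathfrak{g})$, $u_i\in\mathcal H(V)$, $i=1,\dots,5$), decompose the outcome into its $\mathfrak{g}$- and $V$-components, and then, by multilinearity, split the $V$-component into the summand linear in each single letter $u_i$. The $\mathfrak{g}$-component is precisely \eqref{epsilon-BiHom-Nambu} on $\mathfrak{g}$. For the $V$-component, the terms in which the active $V$-letter occupies the innermost bracket collapse — after using the already established multiplicativity and cancelling the $\alpha,\beta$-twists against $\alpha^{-1}\beta$, $\alpha_V\beta_V^{-1}$ via the bijectivity of $\alpha$ and $\beta_V$ — to the representation axiom \eqref{cond-rep-3-BiHom-Lie3}, while the terms in which the active $V$-letter sits in one of the outer $\mathfrak{g}$-slots reduce, after relabelling and one application of the $\epsilon$-skew-symmetry of $\rho$ and of the bracket on $\mathfrak{g}$, to \eqref{cond-rep-3-BiHom-Lie4} (equivalently, to a consequence of \eqref{cond-rep-3-BiHom-Lie3}--\eqref{cond-rep-3-BiHom-Lie4}).

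I expect the main obstacle to be purely organisational rather than conceptual: faithfully tracking the commutation factors generated by \eqref{cond-bic1}--\eqref{cond-bic3} through every permutation of the five arguments on both sides of \eqref{epsilon-BiHom-Nambu}, and checking term by term that the twisting maps $\alpha^{-1}\beta$ and $\alpha_V\beta_V^{-1}$ built into \eqref{crochet-direct-sum} cancel exactly against the $\alpha^2,\beta^2,\alpha\beta$-twists appearing in the $3$-BiHom-Jacobi identity — this last matching is where the hypotheses that $\alpha$ and $\beta_V$ are bijective are used in an essential way.
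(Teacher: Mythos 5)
Your plan is correct and follows essentially the same route as the paper's proof: a direct componentwise verification that the commuting of the twist maps, multiplicativity via \eqref{cond-rep-3-BiHom-Lie1}--\eqref{cond-rep-3-BiHom-Lie2}, the $\epsilon$-BiHom-skew-symmetry via the skew-symmetry of $\rho$ and of the bracket on $\mathfrak{g}$, and finally the $\epsilon$-$3$-BiHom-Jacobi identity all hold, with the $V$-component of the Jacobi identity reducing (after the $\alpha^{-1}\beta$, $\alpha_V\beta_V^{-1}$ twists cancel, which is where bijectivity of $\alpha$ and $\beta_V$ enters) to the representation axioms \eqref{cond-rep-3-BiHom-Lie3}--\eqref{cond-rep-3-BiHom-Lie4}. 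The paper's proof is exactly this computation carried out in full; your sketch identifies the same ingredients and where each is used, so nothing essential is missing.
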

\begin{proof} Since $\alpha\circ\beta=\beta\circ\alpha$ and $\alpha_V\circ\beta_V=\beta_V\circ\alpha_V$, it follows that $\alpha\oplus\alpha_V$ and $\beta\oplus\beta_V$ commute.

For $u,v,w\in\mathcal{H}(\mathfrak{g})$ and $x,y,z\in \mathcal{H}(V)$, using
\eqref{cond-rep-3-BiHom-Lie1}, we obtain
\begin{align*}
&(\alpha\oplus\alpha_V)[u+x,v+y,w+z]= (\alpha\oplus\alpha_V)\Big([x,y,z]+\rho(x,y)(w)\\
&\hspace{2cm}-\epsilon(y,z)\rho(x,\alpha^{-1}\beta(z))(\alpha_V\beta_V^{-1}(v))+\epsilon(x,y+z)\rho(y,\alpha^{-1}\beta(z))(\alpha_V\beta_V^{-1}(u))\Big)\\
&=\alpha([x,y,z])+\alpha_V\Big(\rho(x,y)(w)-\epsilon(y,z)\rho(x,\alpha^{-1}\beta(z))(\alpha_V\beta_V^{-1}(v))\\
&\hspace{4cm} +\epsilon(x,y+z)\rho(y,\alpha^{-1}\beta(z))(\alpha_V\beta_V^{-1}(u))\Big) \\
&=[\alpha(x),\alpha(y),\alpha(z)]+ \rho(\alpha(x),\alpha(y))\alpha_V(w)-\epsilon(y,z)\rho(\alpha(x),\beta(z))(\alpha_V^2\beta_V^{-1}(v))\\
&\hspace{4cm} +\epsilon(x,y+z)\rho(\alpha(y),\beta(z))(\alpha_V^2\beta_V^{-1}(u))\\
&=[(\alpha\oplus\alpha_V)(u+x),(\alpha\oplus\alpha_V)(v+y),(\alpha\oplus\alpha_V)(w+z)]_{\mathfrak g\oplus V} .
\end{align*}
Similarly, we have $(\beta\oplus\beta_V)[u+x,v+y,w+z]_{\mathfrak g\oplus V}=[(\beta\oplus\beta_V)(u+x),(\beta\oplus\beta_V)(v+y),(\beta\oplus\beta_V)(w+z)]$, which gives the multiplicativity of $[\cdot,\cdot,\cdot]_{\mathfrak g\oplus V}$.

For any $u,v,w\in\mathcal{H}(\mathfrak{g})$ and $x,y,z\in \mathcal{H}(V)$,
\begin{align*}
&[(\beta\oplus\beta_V)(u+x),(\beta\oplus\beta_V)(v+y),(\alpha\oplus\alpha_V)(w+z)]_{\mathfrak{g}\oplus V}\\
&=[\beta(u)+\beta_V(x),\beta(v)+\beta_V(y),\alpha(w)+\alpha_V(z)]_{\mathfrak{g}\oplus V}\\
&=[\beta(u),\beta(v),\alpha(w)]+\rho(\beta(u),\beta(v))(\alpha_V(z))\\
&\quad -\epsilon(v,w)\rho(\beta(u),\alpha^{-1}\beta\alpha(w))(\alpha_V\beta_V^{-1}\beta_V(y))\\
&\quad +\epsilon(u,v+w)\rho(\beta(v),\alpha^{-1}\beta\alpha(w))(\alpha_V\beta_V^{-1}\beta_V(x))\\
&=-\epsilon(u,v)[\beta(v),\beta(u),\alpha(w)]-\epsilon(u,v)\rho(\beta(v),\beta(u))(\alpha_V(z))\\
&\quad +\epsilon(u,v+w)\rho(\beta(v),\alpha^{-1}\beta\alpha(w))(\alpha_V\beta_V^{-1}\beta_V(x))\\
&\quad -\epsilon(v,w)\rho(\beta(u),\alpha^{-1}\beta\alpha(w))(\alpha_V\beta_V^{-1}\beta_V(y))\\
&=-\epsilon(u,v)([\beta(v),\beta(u),\alpha(w)]+\rho(\beta(v),\beta(u))(\alpha_V(z))\\
&\quad -\epsilon(u,w)\rho(\beta(v),\alpha^{-1}\beta\alpha(w))(\alpha_V\beta_V^{-1}\beta_V(x))\\
&\quad +\epsilon(v,u+w)\rho(\beta(u),\alpha^{-1}\beta\alpha(w))(\alpha_V\beta_V^{-1}\beta_V(y)))\\
&=-\epsilon(u,v)[(\beta\oplus\beta_V)(v+y),(\beta\oplus\beta_V)(u+x),(\alpha\oplus\alpha_V)(w+z)]_{\mathfrak{g}\oplus V}
\end{align*}
Similarly, one can show that
\begin{align*}
& [(\beta\oplus\beta_V)(u+x),(\beta\oplus\beta_V)(v+y),(\alpha\oplus\alpha_V)(w+z)]_{\mathfrak{g}\oplus V}\\
& =-\epsilon(v,w)[(\beta\oplus\beta_V)(u+x),(\beta\oplus\beta_V)(w+z),(\alpha\oplus\alpha_V)(v+y)]_{\mathfrak{g}\oplus V}.
\end{align*}
Then, $[\cdot,\cdot,\cdot]_{\mathfrak g\oplus V}$ satisfies conditions \eqref{Bihom-epsilon-symm1} and \eqref{Bihom-epsilon-symm2}.

For
$u_{i}\in\mathfrak{g},~x_{i}\in V,~1\leq i\leq5$,
\begin{align*}
&[(\beta\oplus\beta_V)^{2}(u_{1}+x_{1}),(\beta\oplus\beta_V)^{2}(u_{2}+x_{2}), \\
& \hspace{1,5cm}
[(\beta\oplus\beta_V)(u_{3}+x_{3}),
(\beta\oplus\beta_V)(u_{4}+x_{4}),(\alpha\oplus\alpha_V)(u_{5}+x_{5})]_{\mathfrak{g}\oplus V}]_{\mathfrak{g}\oplus V}\\
&=[\beta^{2}(u_{1})+\beta_V^{2}(x_{1}),\beta^{2}(u_{2})+\beta_V^{2}(x_{2}), \\
& \hspace{1,5cm}[\beta(u_{3})+\beta_V(x_{3}),\beta(u_{4})+\beta_V(x_{4}),\alpha(u_{5})+\alpha_V(x_{5})]_{\mathfrak{g}\oplus V}]_{\mathfrak{g}\oplus V}\\
&=[\beta^{2}(u_{1})+\beta_V^{2}(x_{1}),\beta^{2}(u_{2})+\beta_V^{2}(x_{2}),[\beta(u_{3}),\beta(u_{4}),\alpha(u_{5})]+\rho(\beta(u_{3}),\beta(u_{4}))(\alpha_V(x_{5}))\\
&\quad-\epsilon(u_4,u_5)\rho(\beta(u_{3}),\beta(u_{5}))(\alpha_V(x_{4}))+\epsilon(u_3,u_4+u_5)\rho(\beta(u_{4}),\beta(u_{5}))(\alpha_V(x_{3}))]_{\mathfrak{g}\oplus V}\\
&=[\beta^{2}(u_{1}),\beta^{2}(u_{2}),[\beta(u_{3}),\beta(u_{4}),\alpha(u_{5})]]+\rho(\beta^{2}(u_{1}),\beta^{2}(u_{2}))(\rho(\beta(u_{3}),\beta(u_{4}))(\alpha_V(x_{5}))\\
&\quad-\epsilon(u_4,u_5)\rho(\beta(u_{3}),\beta(u_{5}))(\alpha_V(x_{4}))+\epsilon(u_3,u_4+u_5)\rho(\beta(u_{4}),\beta(u_{5}))(\alpha_V(x_{3})))\\
&\quad-\epsilon(u_2,u_3+u_4+u_5)\rho(\beta^{2}(u_{1}),\alpha^{-1}\beta([\beta(u_{3}),\beta(u_{4}),\alpha(u_{5})]))(\alpha_V\beta_V(x_{2}))\\
&\quad+\epsilon(u_1,u_2+u_3+u_4+u_5)\rho(\beta^{2}(u_{2}),\alpha^{-1}\beta([\beta(u_{3}),\beta(u_{4}),\alpha(u_{5})]))(\alpha_V\beta_V(x_{1}))\\
&=\epsilon(u_1+u_2+u_3,u_4+u_5)[\beta^{2}(u_{4}),\beta^{2}(u_{5}),[\beta(u_{1}),\beta(u_{2}),\alpha(u_{3})]]\\
&\quad-\epsilon(u_1+u_2,u_3+u_5)\epsilon(u_4,u_5)[\beta^{2}(u_{3}),\beta^{2}(u_{5}),[\beta(u_{1}),\beta(u_{2}),\alpha(u_{4})]]\\
&\quad+\epsilon(u_1+u_2,u_3+u_4)[\beta^{2}(u_{3}),\beta^{2}(u_{4}),[\beta(u_{1}),\beta(u_{2}),\alpha(u_{5})]]\\
&\quad+\epsilon(u_1+u_2,u_3+u_4)\rho(\beta^{2}(u_{3}),\beta^{2}(u_{4}))\rho(\beta(u_{1}),\beta(u_{2}))(\alpha_V(x_{5}))\\
&\quad+\rho(\alpha^{-1}\beta([\beta(u_{1}),\beta(u_{2}),\alpha(u_{3})]),\beta^{2}(u_{4}))(\alpha_V\beta_V)(x_{5}))\\
&\quad+\epsilon(u_3,u_1+u_2)\rho(\beta^{2}(u_{3}),\alpha^{-1}\beta([\beta(u_{1}),\beta(u_{2}),\alpha(u_{4})]))(\alpha_V\beta_V(x_{5}))\\
&\quad-\epsilon(u_1+u_2,u_3+u_5)\epsilon(u_4,u_5)\rho(\beta^{2}(u_{3}),\beta^{2}(u_{5}))\rho(\beta(u_{1}),\beta(u_{2}))(\alpha_V(x_{4}))\\
&\quad-\epsilon(u_4,u_5)\rho(\alpha^{-1}\beta([\beta(u_{1}),\beta(u_{2}),\alpha(u_{3})]),\beta^{2}(u_{5}))(\alpha_V)\beta_V)(x_{4}))\\
&\quad-\epsilon(u_1+u_2,u_3)\epsilon(u_4,u_5)\rho(\beta^{2}(u_{3}),\alpha^{-1}\beta([\beta(u_{1}),\beta(u_{2}),\alpha(u_{5})]))(\alpha_V\beta_V(x_{4}))\\
&\quad+\epsilon(u_1+u_2,u_3+u_4)\epsilon(u_3,u_4+u_5)\rho(\beta^{2}(u_{4}),\beta^{2}(u_{5}))\rho(\beta(u_{1}),\beta(u_{2}))(\alpha_V(x_{3}))\\
&\quad+\epsilon(u_3,u_4+u_5)\rho(\alpha^{-1}\beta([\beta(u_{1}),\beta(u_{2}),\alpha(u_{4})]),\beta^{2}(u_{5}))(\alpha_V)\beta_V)(x_{5}))\\
&\quad+\epsilon(u_1+u_2,u_3)\epsilon(u_3,u_4+u_5)\rho(\beta^{2}(u_{4}),\alpha^{-1}\beta([\beta(u_{1}),\beta(u_{2}),\alpha(u_{5})]))(\alpha_V\beta_V(x_{3}))\\
&\quad+\epsilon(u_1+u_2,u_3+u_4+u_5)\rho(\alpha^{-1}\beta([\beta(u_{3}),\beta(u_{4}),\alpha(u_{5})]),\beta^{2}(u_{1}))(\alpha_V\beta_V(x_{2}))\\
&\quad-\epsilon(u_1+u_2,u_3+u_4+u_5)\epsilon(u_1,u_2)\rho(\alpha^{-1}\beta([\beta(u_{3}),\beta(u_{4}),\alpha(u_{5})]),\beta^{2}(u_{2}))(\alpha_V\beta_V(x_{1}))\\
&=\epsilon(u_1+u_2+u_3,u_4+u_5)[\beta^{2}(u_{4}),\beta^{2}(u_{5}),[\beta(u_{1}),\beta(u_{2}),\alpha(u_{3})]]
\\
&\quad-\epsilon(u_1+u_2,u_3+u_5)\epsilon(u_4,u_5)[\beta^{2}(u_{3}),\beta^{2}(u_{5}),[\beta(u_{1}),\beta(u_{2}),\alpha(u_{4})]]\\
&\quad+\epsilon(u_1+u_2,u_3+u_4)[\beta^{2}(u_{3}),\beta^{2}(u_{4}),[\beta(u_{1}),\beta(u_{2}),\alpha(u_{5})]]\\
&\quad+\epsilon(u_1+u_2,u_3+u_4)\rho(\beta^{2}(u_{3}),\beta^{2}(u_{4}))\rho(\beta(u_{1}),\beta(u_{2}))(\alpha_V(x_{5}))\\
&\quad+\rho(\alpha^{-1}\beta([\beta(u_{1}),\beta(u_{2}),\alpha(u_{3})]),\beta^{2}(u_{4}))(\alpha_V)\beta_V)(x_{5}))\\
&\quad+\epsilon(u_3,u_1+u_2)\rho(\beta^{2}(u_{3}),\alpha^{-1}\beta([\beta(u_{1}),\beta(u_{2}),\alpha(u_{4})]))(\alpha_V\beta_V(x_{5}))\\
&\quad-\epsilon(u_1+u_2,u_3+u_5)\epsilon(u_4,u_5)\rho(\beta^{2}(u_{3}),\beta^{2}(u_{5}))\rho(\beta(u_{1}),\beta(u_{2}))(\alpha_V(x_{4}))\\
&\quad-\epsilon(u_4,u_5)\rho(\alpha^{-1}\beta([\beta(u_{1}),\beta(u_{2}),\alpha(u_{3})]),\beta^{2}(u_{5}))(\alpha_V)\beta_V)(x_{4}))\\
&\quad-\epsilon(u_1+u_2,u_3)\epsilon(u_4,u_5)\rho(\beta^{2}(u_{3}),\alpha^{-1}\beta([\beta(u_{1}),\beta(u_{2}),\alpha(u_{5})]))(\alpha_V\beta_V(x_{4}))\\
&\quad+\epsilon(u_1+u_2,u_3+u_4)\epsilon(u_3,u_4+u_5)\rho(\beta^{2}(u_{4}),\beta^{2}(u_{5}))\rho(\beta(u_{1}),\beta(u_{2}))(\alpha_V(x_{3}))\\
&\quad+\epsilon(u_3,u_4+u_5)\rho(\alpha^{-1}\beta([\beta(u_{1}),\beta(u_{2}),\alpha(u_{4})]),\beta^{2}(u_{5}))(\alpha_V)\beta_V)(x_{3}))\\
&\quad+\epsilon(u_1+u_2,u_3)\epsilon(u_3,u_4+u_5)\rho(\beta^{2}(u_{4}),\alpha^{-1}\beta([\beta(u_{1}),\beta(u_{2}),\alpha(u_{5})]))(\alpha_V\beta_V(x_{3}))\\
&\quad+\epsilon(u_1+u_2+u_3,u_4+u_5)\epsilon(u_3,u_1+u_2)\rho(\beta^{2}(u_{4}),\beta^{2}(u_{5}))\rho(\beta(u_{3}),\beta(u_{1}))(\alpha_V(x_{2}))\\
&\quad+\epsilon(u_1+u_2,u_3+u_4+u_5)\epsilon(u_5,u_3+u_4)\rho(\beta^{2}(u_{5}),\beta^{2}(u_{3}))\rho(\beta(u_{4}),\beta(u_{1}))(\alpha_V(x_{2}))\\
&\quad+\epsilon(u_1+u_2,u_3+u_4+u_5)\rho(\beta^{2}(u_{3}),\beta^{2}(u_{4}))\rho(\beta(u_{5}),\beta(u_{1}))(\alpha_V(x_{2}))\\
&\quad-\epsilon(u_1+u_2+u_3,u_4+u_5)\epsilon(u_1,u_2+u_3)\rho(\beta^{2}(u_{4}),\beta^{2}(u_{5}))\rho(\beta(u_{3}),\beta(u_{2}))(\alpha_V(x_{1}))\\
&\quad-\epsilon(u_1+u_2,u_3+u_4+u_5)\epsilon(u_5,u_3+u_4)\epsilon(u_1,u_2)\rho(\beta^{2}(u_{5}),\beta^{2}(u_{3}))\rho(\beta(u_{4}),\beta(u_{2}))(\alpha_V(x_{1}))\\
&\quad-\epsilon(u_1+u_2,u_3+u_4+u_5)\rho(\beta^{2}(u_{3}),\beta^{2}(u_{4}))\rho(\beta(u_{5}),\beta(u_{2}))(\alpha_V(x_{1}))\\
&=\epsilon(u_1+u_2+u_3,u_4+u_5)[\beta^{2}(u_{4}),\beta^{2}(u_{5}),[\beta(u_{1}),\beta(u_{2}),\alpha(u_{3})]]\\
&\quad+\rho(\beta^{2}(u_{4}),\beta^{2}(u_{5}))(\epsilon(u_1+u_2+u_3,u_4+u_5)\rho(\beta(u_{1}),\beta(u_{2}))(\alpha_V(x_{3}))\\
&\quad-\epsilon(u_1+u_2+u_3,u_4+u_5)\epsilon(u_2,u_3)\epsilon(u_1,u_2)\rho(\beta(u_{1}),\beta(u_{3}))(\alpha_V(x_{2}))\\
&\quad+\epsilon(u_1+u_2+u_3,u_4+u_5)\epsilon(u_1,u_2+u_3)\rho(\beta(u_{2}),\beta(u_{3}))(\alpha_V(x_{1})))\\
&\quad-\epsilon(u_1+u_2+u_3,u_4)\rho(\beta^{2}(u_{4}),\alpha^{-1}\beta([\beta(u_{1}),\beta(u_{2}),\alpha(u_{3})]))(\alpha_V\beta_V(x_{5}))\\
&\quad-\epsilon(u_1+u_2+u_3+u_4,u_5)\rho(\beta^{2}(u_{5}),\alpha^{-1}\beta([\beta(u_{1}),\beta(u_{2}),\alpha(u_{3})]))(\alpha_V\beta_V(x_{4}))\\
&\quad-\epsilon(u_1+u_2,u_3+u_5)\epsilon(u_4,u_5)[\beta^{2}(u_{3}),\beta^{2}(u_{5}),[\beta(u_{1}),\beta(u_{2}),\alpha(u_{4})]]\\
&\quad-\rho(\beta^{2}(u_{3}),\beta^{2}(u_{5}))(\epsilon(u_1+u_2,u_3+u_5)\epsilon(u_4,u_5)(\rho(\beta(u_{1}),\beta(u_{2}))(\alpha_V(x_{4}))\\
&\quad-\epsilon(u_1+u_2,u_3+u_5)\epsilon(u_4,u_2+u_5)\rho(\beta(u_{1}),\beta(u_{4}))(\alpha_V(x_{2}))\\
&\quad+\epsilon(u_1+u_2,u_3+u_5)\epsilon(u_4,u_2+u_5)\epsilon(u_1,u_2)\rho(\beta(u_{2}),\beta(u_{4}))(\alpha_V(x_{1})))\\
&\quad+\epsilon(u_1+u_2,u_3)\rho(\beta^{2}(u_{3}),\alpha^{-1}\beta([\beta(u_{1}),\beta(u_{2}),\alpha(u_{4})]))(\alpha_V\beta_V(x_{5}))\\
&\quad-\epsilon(u_1+u_2+u_3+u_4,u_5)\epsilon(u_3,u_4)\rho(\beta^{2}(u_{5}),\alpha^{-1}\beta([\beta(u_{1}),\beta(u_{2}),\alpha(u_{4})]))(\alpha_V\beta_V(x_{3}))\\
&\quad+\epsilon(u_1+u_2,u_3+u_4)[\beta^{2}(u_{3}),\beta^{2}(u_{4}),[\beta(u_{1}),\beta(u_{2}),\alpha(u_{5})]]\\
&\quad+\rho(\beta^{2}(u_{3}),\beta^{2}(u_{4}))(\epsilon(u_1+u_2,u_3+u_4)\rho(\beta(u_{1}),\beta(u_{2}))(\alpha_V(x_{5}))\\
&\quad-\epsilon(u_1+u_2,u_3+u_4)\epsilon(u_2,u_5)\rho(\beta(u_{1}),\beta(u_{5}))(\alpha_V(x_{2}))\\
&\quad+\epsilon(u_1+u_2,u_3+u_4)\epsilon(u_1,u_5)\rho(\beta(u_{2}),\beta(u_{5}))(\alpha_V(x_{1}))\\
&\quad-\epsilon(u_1+u_2,u_3)\epsilon(u_4,u_5)\rho(\beta^{2}(u_{3}),\alpha^{-1}\beta([\beta(u_{1}),\beta(u_{2}),\alpha(u_{5})]))(\alpha_V\beta_V(x_{4}))\\
&\quad+\epsilon(u_1+u_2,u_4)\epsilon(u_3,u_4+u_5)\rho(\beta^{2}(u_{4}),\alpha^{-1}\beta([\beta(u_{1}),\beta(u_{2}),\alpha(u_{4})]))(\alpha_V\beta_V(x_{3}))
\\
&=\epsilon(u_1+u_2+u_3,u_4+u_5)[(\beta\oplus\beta_V)^{2}(u_{4}+x_{4}),(\beta\oplus\beta_V)^{2}(u_{5}+x_{5}),\\
&\hspace{2cm} [(\beta\oplus\beta_V)(u_{1}+x_{1}),
(\beta\oplus\beta_V)(u_{2}+x_{2}),(\alpha\oplus\alpha_V)(u_{3}+x_{3})]_{\mathfrak{g}\oplus V}]_{\mathfrak{g}\oplus V}\\
&\quad-\epsilon(u_1+u_,u_3+u_5)\epsilon(u_4,u_5)[(\beta\oplus\beta_V)^{2}(u_{3}+x_{3}),(\beta\oplus\beta_V)^{2}(u_{5}+x_{5}),\\
&\hspace{2cm} [(\beta\oplus\beta_V)(u_{1}+x_{1}),
(\beta\oplus\beta_V)(u_{2}+x_{2}),(\alpha\oplus\alpha_V)(u_{4}+x_{4})]_{\mathfrak{g}\oplus V}]_{\mathfrak{g}\oplus V}\\
&\quad+\epsilon(u_1+u_2,u_3+u_4)[(\beta\oplus\beta_V)^{2}(u_{3}+x_{3}),(\beta\oplus\beta_V)^{2}(u_{4}+x_{4}),[(\beta\oplus\beta_V)(u_{1}+x_{1}),
\\
&(\beta\oplus\beta_V)(u_{2}+x_{2}),(\alpha\oplus\alpha_V)(u_{5}+x_{5})]_{\mathfrak{g}\oplus V}]_{\mathfrak{g}\oplus V}.
\end{align*}
Then, the condition \eqref{epsilon-BiHom-Nambu} is satisfied by $[\cdot,\cdot,\cdot]_{\mathfrak g\oplus V},\;\alpha\oplus\alpha_V$ and $\beta\oplus\beta_V$, which implies that   $\mathfrak{g}\ltimes_\rho V:=(\mathfrak{g}\oplus V,[\cdot,\cdot,\cdot]_{\mathfrak g\oplus V},\alpha\oplus\alpha_V,\beta\oplus\beta_V)$ is a $3$-Bihom-Lie color algebra.
\end{proof}
In the following, we introduce the notion of $\alpha^r\beta^s$-adjoint representation of a regular $3$-BiHom-Lie color algebra.
\begin{defi}
For any integer $r,s$, the {\it \bf{$\alpha^r\beta^s$-adjoint representation}} of a $3$-BiHom-Lie color algebra $(\mathfrak{g},[\cdot,\cdot,\cdot],\epsilon,\alpha,\beta)$, denoted by $ad_{r,s}$, is defined for all $x,y,z\in\mathcal{H}(\mathfrak{g})$ by
$$ad_{r,s}(x,y)(z)=[\alpha^r\beta^s(x),\alpha^r\beta^s(y),z].$$
\end{defi}
\begin{rmk}
For $(r,s)=(0,0)$ we have the adjoint representation of a $3$-BiHom-Lie color algebra denoted simply by $ad$.
\end{rmk}

\begin{pro}
The $\alpha^r\beta^s$-adjoint representation of a regular multiplicative $3$-BiHom-Lie color algebra $(\mathfrak{g},[\cdot,\cdot,\cdot],\epsilon,\alpha,\beta)$ is satisfying conditions \eqref{cond-rep-3-BiHom-Lie1}-\eqref{cond-rep-3-BiHom-Lie4}.
\end{pro}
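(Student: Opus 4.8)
The plan is to verify directly the four representation axioms \eqref{cond-rep-3-BiHom-Lie1}--\eqref{cond-rep-3-BiHom-Lie4} for the data $\rho=ad_{r,s}$, $\alpha_V=\alpha$, $\beta_V=\beta$. First I would record the facts used throughout. Because the algebra is regular and multiplicative, $\alpha$ and $\beta$ are commuting even automorphisms that intertwine the bracket; hence $\psi:=\alpha^r\beta^s$ is an even automorphism commuting with $\alpha$ and $\beta$, with $\psi([x,y,z])=[\psi(x),\psi(y),\psi(z)]$, and its inverse exists. Substituting $\beta^{-1}(x),\beta^{-1}(y),\alpha^{-1}(z)$ into \eqref{Bihom-epsilon-symm1}--\eqref{Bihom-epsilon-symm2} (legitimate by regularity, and leaving the bicharacter values unchanged since $\alpha,\beta$ are even) yields the untwisted skew-symmetry $[x,y,z]=-\epsilon(x,y)[y,x,z]=-\epsilon(y,z)[x,z,y]$. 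This already gives that $\alpha_V,\beta_V$ commute, that $ad_{r,s}$ is even, and that $ad_{r,s}$ is $\epsilon$-skew-symmetric, so $(\mathfrak{g},ad_{r,s},\alpha,\beta)$ has the right shape.

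For \eqref{cond-rep-3-BiHom-Lie1} and \eqref{cond-rep-3-BiHom-Lie2} I would simply compute, for $z\in\mathcal{H}(\mathfrak{g})$,
\[
ad_{r,s}(\alpha(u),\alpha(v))(\alpha(z))=[\psi\alpha(u),\psi\alpha(v),\alpha(z)]=[\alpha\psi(u),\alpha\psi(v),\alpha(z)]=\alpha\big([\psi(u),\psi(v),z]\big)=\big(\alpha\circ ad_{r,s}(u,v)\big)(z),
\]
using $\psi\alpha=\alpha\psi$ and multiplicativity of $\alpha$; the computation for $\beta$ is identical. So these two axioms cost nothing beyond commutativity and multiplicativity of the twisting maps.

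The substantive step is \eqref{cond-rep-3-BiHom-Lie3} and \eqref{cond-rep-3-BiHom-Lie4}. Here I would evaluate both sides on an arbitrary $z\in\mathcal{H}(\mathfrak{g})$, unfold all occurrences of $ad_{r,s}$, and push every $\alpha$, $\beta$, $\psi$ through the nested brackets by multiplicativity; these maps being even, the colour factors are untouched, so applying $\psi^{-1}$ to both sides (possible by regularity) reduces both identities to the case $r=s=0$. In that case \eqref{cond-rep-3-BiHom-Lie3}, read on $z$, asserts exactly that for the acting pair $(u,v)$ in the leading slot the outer bracket $[\,\cdot,\cdot,[x,y,z]\,]$ equals the sum of the three brackets obtained by letting $(u,v)$ act in turn on $x$, on $y$, and on $z$; this is the $\epsilon$-$3$-BiHom-Jacobi identity \eqref{epsilon-BiHom-Nambu} after substituting suitably $\alpha^{-1}$- and $\beta^{-1}$-twisted homogeneous arguments (so that the inner bracket takes the form $[\beta(\cdot),\beta(\cdot),\alpha(\cdot)]$ and the outer slots take $\beta^{2}$), reordering the summands of \eqref{epsilon-BiHom-Nambu} by the untwisted skew-symmetry, and collecting the colour coefficients via the bicharacter relations \eqref{cond-bic1}--\eqref{cond-bic3}. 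Condition \eqref{cond-rep-3-BiHom-Lie4} (in the reduced case) then results from a further manipulation of the same kind, combining \eqref{epsilon-BiHom-Nambu} once more with the untwisted skew-symmetry so as to bring the bracketed argument $[\beta(u),\beta(v),x]$ into a leading slot on the left-hand side.

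The hard part is the bookkeeping: confirming that the exact powers of $\alpha$ and $\beta$ attached to each argument slot of \eqref{cond-rep-3-BiHom-Lie3}--\eqref{cond-rep-3-BiHom-Lie4} are precisely those produced by composing $ad_{r,s}$ with itself and with $\alpha,\beta,\alpha\beta$, and that all accumulated factors $\epsilon(\cdot,\cdot)$ match on the two sides after repeated use of \eqref{cond-bic1}--\eqref{cond-bic3} and the skew-symmetry signs. No identity beyond \eqref{epsilon-BiHom-Nambu}, the $\epsilon$-BiHom-skew-symmetry \eqref{Bihom-epsilon-symm1}--\eqref{Bihom-epsilon-symm2}, multiplicativity and regularity enters, so once the reduction to $r=s=0$ is in place the verification is routine.
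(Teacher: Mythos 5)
Your proposal is correct and takes essentially the same route as the paper's proof: conditions \eqref{cond-rep-3-BiHom-Lie1}--\eqref{cond-rep-3-BiHom-Lie2} follow from multiplicativity and the commutation of the twisting maps, and conditions \eqref{cond-rep-3-BiHom-Lie3}--\eqref{cond-rep-3-BiHom-Lie4} are obtained by repositioning arguments with the $\epsilon$-BiHom-skew-symmetry \eqref{Bihom-epsilon-symm1}--\eqref{Bihom-epsilon-symm2} (regularity supplying the needed $\alpha^{-1},\beta^{-1}$ twists) and then applying the $\epsilon$-$3$-BiHom-Jacobi identity \eqref{epsilon-BiHom-Nambu}. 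The only deviation is your preliminary reduction to $r=s=0$ via the automorphism $\alpha^{r}\beta^{s}$, which is a legitimate change of variables that lightens the bookkeeping, whereas the paper performs the same computation carrying the powers $\alpha^{r}\beta^{s}$ through explicitly.
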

\begin{proof}
First, $ad_{r,s}$ satisfies condition \eqref{cond-rep-3-BiHom-Lie1} since for $x,y,z\in\mathcal{H}(\mathfrak{g})$,
\begin{eqnarray*}
ad_{r,s}(\alpha(x),\alpha(y))\circ\alpha(z)&=&[\alpha^{r+1}\beta^s(x),\alpha^{r+1}\beta^s(y),\alpha(z)]\\&=&\alpha([\alpha^r\beta^s(x),\alpha^r\beta^s(y),z])\\&=& \alpha\circ ad_{r,s}(x,y)(z),
\end{eqnarray*}
Similarly, one can show that $ad_{r,s}$ satisfies condition \eqref{cond-rep-3-BiHom-Lie2}.

For $x,y,z,u,v\in\mathcal{H}(\mathfrak{g})$, by applying   \eqref{Bihom-epsilon-symm1}-\eqref{epsilon-BiHom-Nambu}, we obtain
\begin{align*}
&ad_{r,s}(\alpha\beta(u),\alpha\beta(v))\circ ad_{r,s}(x,y)(t)=[\alpha^{r+1}\beta^{s+1}(u),\alpha^{r+1}\beta^{s+1}(v),[\alpha^r\beta^s(x),\alpha^r\beta^s(y),t]]\\
&=-\epsilon(y,t)[\alpha^{r+1}\beta^{s+1}(u),\alpha^{r+1}\beta^{s+1}(v),[\alpha^r\beta^s(x),\alpha^{-1}\beta(t),\alpha^{r+1}\beta^{s-1}(y)]]\\
&=-\epsilon(y+t,x+u+v)[\alpha^{-1}\beta^2(t),\alpha^r\beta^{s+1}(y),[\alpha^{r+1}\beta^s(u),\alpha^{r+1}\beta^s(v),\alpha^{r+1}\beta^{s-1}(x)]]\\
&\quad +\epsilon(x+y,u+v+)[\alpha^r\beta^{s+1}(x),\alpha^r\beta^{s+1}(y),[\alpha^{r+1}\beta^s(u),\alpha^{r+1}\beta^s(v),t]]\\
&\quad-\epsilon(u+v,t+x)\epsilon(y,t)[\alpha^r\beta^{s+1}(x),\alpha^{-1}\beta(t),[\alpha^{r+1}\beta^s(u),\alpha^{r+1}\beta^s(v),\alpha^{r+1}\beta^{s-1}(y)]]\\
&=[[\alpha^r\beta^{s+1}(u),\alpha^r\beta^{s+1}(v),\alpha^r\beta^s(x)],\alpha^r\beta^{s+1}(y),\beta(t)]\\
&\quad+\epsilon(x+y,u+v)[\alpha^r\beta^{s+1}(x),\alpha^r\beta^{s+1}(y),[\alpha^{r+1}\beta^s(u),\alpha^{r+1}\beta^s(v),t]])\\
&\quad+\epsilon(x,u+v)[\alpha^r\beta^{s+1}(x),[\alpha^r\beta^{s+1}(u),\alpha^r\beta^{s+1}(v),\alpha^r\beta^s(y)],t]\\
&=
ad_{r,s}([\beta(u),\beta(v),x],\beta(y))\beta(t)+\epsilon(x+y,u+v)ad_{r,s}(\beta(x),\beta(y))ad_{r,s}(\alpha(u),\alpha(v))(t)\\
&\quad+\epsilon(x,u+v)ad_{r,s}(\beta(x),[\beta(u),\beta(v),y])\beta(t),
\end{align*}
and hence $ad_{r,s}$ satisfies condition \eqref{cond-rep-3-BiHom-Lie3}. In the same way,
\begin{align*}
& ad_{r,s}([\beta(u),\beta(v),x],\beta(y))\circ\beta(t)=[[\alpha^r\beta^{s+1}(u),\alpha^r\beta^{s+1}(v),\alpha^r\beta^s(x)],\alpha^r\beta^{s+1}(y),\beta(t)]\\
&=\varepsilon(y+t,x+u+v) [\alpha^r\beta^{s+1}(y),\alpha^{-1}\beta^2(t),[\alpha^{r+1}\beta^s(u),\alpha^{r+1}\beta^s(v),\alpha^{r+1}\beta^{s-1}(x)]]\\
&=\varepsilon(u,x+y+t+v)[\alpha^{r+1}\beta^{s+1}(v),\alpha^r\beta^{s+1}(x),[\alpha^r\beta^s(y),\alpha^{-1}\beta(t),\alpha^{r+2}\beta^{s-1}(u)]]\\
&\quad- \varepsilon(v,x+y+t)[\alpha^{r+1}\beta^{s+1}(u),\alpha^r\beta^{s+1}(x),[\alpha^r\beta^s(y),\alpha^{-1}\beta(t),\alpha^{r+2}\beta^{s-1}(v)]] \\
&\quad+\varepsilon(x,y+t)[\alpha^{r+1}\beta^{s+1}(u),\alpha^{r+1}\beta^{s+1}(v),[\alpha^r\beta^s(y),\alpha^{-1}\beta(t),\alpha^{r+1}\beta^{s-1}(x)]]\\
&=\varepsilon(u,x+v)[\alpha^{r+1}\beta^{s+1}(v),\alpha^r\beta^{s+1}(x),[\alpha^{r+1}\beta^s(u),\alpha^r\beta^s(y),t]]\\
&\quad- \varepsilon(x,u+v)[\alpha^{r+1}\beta^{s+1}(x),\alpha^r\beta^{s+1}(u),[\alpha^{r+1}\beta^s(v),\alpha^r\beta^s(y),t]] \\
&\quad+[\alpha^{r+1}\beta^{s+1}(u),\alpha^{r+1}\beta^{s+1}(v),[\alpha^r\beta^s(x),\alpha^r\beta^s(y),t]]\\
&=\varepsilon(u,x+v)ad_{r,s}(\alpha\beta(v),\beta(x))\circ ad_{r,s}(\alpha(u),y)(t)\\
&\quad-\varepsilon(x,u+v)ad_{r,s}(\alpha\beta(x),\beta(u))\circ ad_{r,s}(\alpha(v),y)(t)\\
&\quad+ad_{r,s}(\alpha\beta(u),\alpha\beta(v))ad_{r,s}(x,y)(t),
\end{align*}
yields that $ad_{r,s}$ satisfies condition \eqref{cond-rep-3-BiHom-Lie4}. The proposition is proved.
\end{proof}
In the following, we introduce the notion of Kupershmidt operators on $3$-BiHom-Lie color algebras.
\begin{defi}
Let $(\mathfrak{g},[\cdot,\cdot,\cdot],\epsilon,\alpha,\beta)$ be a $3$-BiHom-Lie color algebra and $(V,\rho,\alpha_V,\beta_V)$ be a representation of $\mathfrak{g}$. Assume that $\alpha_V$ and $\beta_V$ are bijective. An even linear map $T:V\to\mathfrak{g}$ is called {\it \bf{Kupershmidt operator}} on $\mathfrak{g}$ with respect to the representation $(V,\rho,\alpha_V,\beta_V)$ if it satisfies, for all 
$u,v,w\in\mathcal{H}(V)$,
\begin{align}
\alpha\circ T=T\circ\alpha_V\;\;&\text{and}\;\;\beta\circ T=T\circ\beta_V,\label{cond-O-op-3-BiH-Lie-color1}\\
[T(u),T(v),T(w)]&=T\Big(\rho(T(u),T(v))w-\epsilon(v,w)\rho(T(u),T(\alpha_V^{-1}\beta_V(w)))\alpha_V\beta_V^{-1}(v)\label{cond-O-op-3-BiH-Lie-color2}\\&+\epsilon(u,v+w)\rho(T(v),T(\alpha_V^{-1}\beta_V(w)))\alpha_V\beta_V^{-1}(u)\Big).\nonumber
\end{align}
\end{defi}
\begin{ex}
Let $(\mathfrak{g},[\cdot,\cdot,\cdot],\epsilon,\alpha,\beta)$ be a $3$-BiHom-Lie color algebra.
A Rota-Baxter operator of weight zero on $\mathfrak{g}$ is just a Kupershmidt operator on $\mathfrak{g}$ with respect to the representation $(\mathfrak{g},ad,\alpha,\beta)$, that is, $\mathcal{R}$ commuting with $\alpha$ and $\beta$ satisfying, for all $x,y,z\in\mathcal{H}(\mathfrak{g})$, 
\begin{equation}
[\mathcal{R}(x),\mathcal{R}(y),\mathcal{R}(z)]=\mathcal{R}\Big([\mathcal{R}(x),\mathcal{R}(y),z]+[\mathcal{R}(x),y,\mathcal{R}(z)]+[x,\mathcal{R}(y),\mathcal{R}(z)]\Big).\label{RB-op-3-BiH-Lie-color}
\end{equation}
\end{ex}
\begin{thm}
An even linear map $T:V\to\mathfrak{g}$ is a Kupershmidt operator on a $3$-BiHom-Lie color algebra $(\mathfrak{g},[\cdot,\cdot,\cdot],\epsilon,\alpha,\beta)$ with respect to a representation $(V,\rho,\alpha_V,\beta_V)$ if and only if, the graph $Gr(T)=\{(T(u),u),\;u\in\mathcal{H}(V)\}$ is a subalgebra of the smi-direct product $3$-BiHom-Lie color algebra $(\mathfrak{g}\oplus V,[\cdot,\cdot,\cdot]_{\mathfrak{g}\oplus V},\epsilon,\alpha\oplus\alpha_V,\beta\oplus\beta_V)$ defined in Proposition \ref{struc-3-BiHom-semidirect}.
\end{thm}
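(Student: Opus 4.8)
The plan is to unravel directly what it means for the graph $Gr(T)=\{(T(u),u):u\in\mathcal{H}(V)\}$ to be a subalgebra of the semi-direct product $\mathfrak{g}\ltimes_\rho V$ and to observe that the two defining conditions \eqref{cond-O-op-3-BiH-Lie-color1}--\eqref{cond-O-op-3-BiH-Lie-color2} of a Kupershmidt operator fall out term by term. Since $T$ is an even linear map, $Gr(T)$ is automatically a $\Gamma$-graded subspace of $\mathfrak{g}\oplus V$, and an element $g+v$ of $\mathfrak{g}\oplus V$ lies in $Gr(T)$ if and only if $g=T(v)$. Thus only two things need checking: stability of $Gr(T)$ under the twisting maps $\alpha\oplus\alpha_V$ and $\beta\oplus\beta_V$, and closure of $Gr(T)$ under the bracket $[\cdot,\cdot,\cdot]_{\mathfrak{g}\oplus V}$.

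First I would treat the twisting maps. We have $(\alpha\oplus\alpha_V)(T(u)+u)=\alpha(T(u))+\alpha_V(u)$ and $(\beta\oplus\beta_V)(T(u)+u)=\beta(T(u))+\beta_V(u)$; by the characterization of $Gr(T)$ just recalled, these lie in $Gr(T)$ for all $u\in\mathcal{H}(V)$ if and only if $\alpha\circ T=T\circ\alpha_V$ and $\beta\circ T=T\circ\beta_V$, which is precisely \eqref{cond-O-op-3-BiH-Lie-color1}. Under these relations together with the standing bijectivity hypotheses on $\alpha$, $\alpha_V$ and $\beta_V$ (inherited from Proposition \ref{struc-3-BiHom-semidirect} and from the definition of a Kupershmidt operator), one obtains the auxiliary identity $\alpha^{-1}\beta\circ T=T\circ\alpha_V^{-1}\beta_V$, which is used in the next step.

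Next, using the explicit bracket \eqref{crochet-direct-sum} and the fact that $|T(x)|=|x|$ for homogeneous $x$, I would compute, for $u,v,w\in\mathcal{H}(V)$,
\begin{align*}
&[T(u)+u,\,T(v)+v,\,T(w)+w]_{\mathfrak{g}\oplus V}\\
&=[T(u),T(v),T(w)]+\rho(T(u),T(v))(w)-\epsilon(v,w)\rho(T(u),\alpha^{-1}\beta(T(w)))(\alpha_V\beta_V^{-1}(v))\\
&\qquad+\epsilon(u,v+w)\rho(T(v),\alpha^{-1}\beta(T(w)))(\alpha_V\beta_V^{-1}(u)).
\end{align*}
Replacing $\alpha^{-1}\beta(T(w))$ by $T(\alpha_V^{-1}\beta_V(w))$ via the auxiliary identity, the $V$-component of this bracket is exactly the element $\Xi\in V$ appearing as the argument of $T$ on the right-hand side of \eqref{cond-O-op-3-BiH-Lie-color2}, while the $\mathfrak{g}$-component is $[T(u),T(v),T(w)]$. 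Since $([T(u),T(v),T(w)],\Xi)\in Gr(T)$ is equivalent to $[T(u),T(v),T(w)]=T(\Xi)$, closure of $Gr(T)$ under $[\cdot,\cdot,\cdot]_{\mathfrak{g}\oplus V}$ is precisely identity \eqref{cond-O-op-3-BiH-Lie-color2}.

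Combining the two steps yields both implications: if $T$ satisfies \eqref{cond-O-op-3-BiH-Lie-color1}--\eqref{cond-O-op-3-BiH-Lie-color2}, then $Gr(T)$ is stable under $\alpha\oplus\alpha_V$, $\beta\oplus\beta_V$ and under the ternary bracket, hence a subalgebra; conversely, if $Gr(T)$ is a subalgebra, then restricting stability to triples from $Gr(T)$ and to the images of the twisting maps forces exactly these two conditions. There is no genuine obstacle here; the only points requiring a little care are noting that $Gr(T)$ is graded (immediate, $T$ being even) and justifying $\alpha^{-1}\beta\circ T=T\circ\alpha_V^{-1}\beta_V$ from \eqref{cond-O-op-3-BiH-Lie-color1} and the bijectivity assumptions, after which the argument is a routine matching of $\mathfrak{g}$- and $V$-components.
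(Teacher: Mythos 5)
Your proposal is correct and takes essentially the same route as the paper: compute $[T(u)+u,\,T(v)+v,\,T(w)+w]_{\mathfrak{g}\oplus V}$ via \eqref{crochet-direct-sum}, use that $T$ is even to carry the bicharacter signs over, and observe that closure of $Gr(T)$ under the bracket is exactly identity \eqref{cond-O-op-3-BiH-Lie-color2}. In fact your write-up is slightly more complete than the paper's, since you make explicit that invariance of $Gr(T)$ under $\alpha\oplus\alpha_V$ and $\beta\oplus\beta_V$ is equivalent to \eqref{cond-O-op-3-BiH-Lie-color1} and that this, together with the bijectivity hypotheses, justifies the substitution $\alpha^{-1}\beta(T(w))=T(\alpha_V^{-1}\beta_V(w))$, a step the paper performs silently in its displayed computation.
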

\begin{proof}
Let $T:V\to\mathfrak{g}$ be an even linear map. For all $u,v,w\in\mathcal{H}(V)$, 
\begin{multline*}
[(T(u),u),(T(v),v),(T(w),w)]= \\
\Big([T(u),T(v),T(w)]+\rho(T(u),T(v))w-\epsilon(v,w)\rho(T(u),T(\alpha_V^{-1}\beta_V(w)))\alpha_V\beta_V^{-1}(v)\\
+\epsilon(u,v+w)\rho(T(v),T(\alpha_V^{-1}\beta_V(w)))\alpha_V\beta_V^{-1}(u)\Big),
\end{multline*}
which implies that the graph $Gr(T)$ is a subalgebra of the  semi-direct product $3$-BiHom-Lie color algebra $(\mathfrak{g}\oplus V,[\cdot,\cdot,\cdot]_{\mathfrak{g}\oplus V},\epsilon,\alpha\oplus\alpha_V,\beta\oplus\beta_V)$, if and only if $T$ satisfies
\begin{align*}
[T(u),T(v),T(w)]&=T\Big(\rho(T(u),T(v))w-\epsilon(v,w)\rho(T(u),T(\alpha_V^{-1}\beta_V(w)))\alpha_V\beta_V^{-1}(v)\\&+\epsilon(u,v+w)\rho(T(v),T(\alpha_V^{-1}\beta_V(w)))\alpha_V\beta_V^{-1}(u)\Big),
\end{align*}
for all $u,v,w\in\mathcal{H}(V)$, which yields that $T$ is a Kupershmidh operator on $\mathfrak{g}$ with respect to the representation $(V,\rho,\alpha_V,\beta_V)$.
\end{proof}

\begin{cor}
Let $T:V\to\mathfrak{g}$ be a Kupershmidt operator on a $3$-BiHom-Lie color algebra $(\mathfrak{g},[\cdot,\cdot,\cdot],\epsilon,\alpha,\beta)$ with respect to a representation $(V,\rho,\alpha_V,\beta_V)$. Then, there exists a $3$-BiHom-Lie color algebra structure $[\cdot,\cdot,\cdot]_T$ on $V$ given for all $u,v,w\in\mathcal{H}(V)$ by
\begin{align}
[u,v,w]_T=&\rho(T(u),T(v))w-\epsilon(v,w)\rho(T(u),T(\alpha_V^{-1}\beta_V(w)))\alpha_V\beta_V^{-1}(v)\label{3-BiH-Lie-color-kup}\\&+\epsilon(u,v+w)\rho(T(v),T(\alpha_V^{-1}\beta_V(w)))\alpha_V\beta_V^{-1}(u).\nonumber
\end{align}
\end{cor}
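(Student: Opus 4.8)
The plan is to deduce the statement from the preceding theorem by transporting the $3$-BiHom-Lie color algebra structure along the canonical bijection between $V$ and the graph $Gr(T)$.

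First I would observe that the linear map $\pi\colon V\to Gr(T)$, $u\mapsto(T(u),u)$, is a bijection and that $Gr(T)$ is stable under the twisting maps of the semi-direct product $\mathfrak{g}\ltimes_\rho V$ of Proposition \ref{struc-3-BiHom-semidirect}: using \eqref{cond-O-op-3-BiH-Lie-color1},
\[
(\alpha\oplus\alpha_V)(T(u),u)=(\alpha(T(u)),\alpha_V(u))=(T(\alpha_V(u)),\alpha_V(u))\in Gr(T),
\]
and likewise for $\beta\oplus\beta_V$; the same identities give $\pi\circ\alpha_V=(\alpha\oplus\alpha_V)\circ\pi$ and $\pi\circ\beta_V=(\beta\oplus\beta_V)\circ\pi$ on $V$. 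By the preceding theorem $Gr(T)$ is a subalgebra of $\mathfrak{g}\ltimes_\rho V$, and a subalgebra of a $3$-BiHom-Lie color algebra that is closed under the two commuting twisting maps is again a $3$-BiHom-Lie color algebra (the $\epsilon$-BiHom-skew-symmetry \eqref{Bihom-epsilon-symm1}--\eqref{Bihom-epsilon-symm2} and the $\epsilon$-$3$-BiHom-Jacobi identity \eqref{epsilon-BiHom-Nambu}, as well as multiplicativity, are inherited by restriction). Hence $Gr(T)$, with the restricted bracket $[\cdot,\cdot,\cdot]_{\mathfrak{g}\oplus V}$ and the restricted twisting maps, is a $3$-BiHom-Lie color algebra.

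Next I would pull this structure back along $\pi$. From \eqref{crochet-direct-sum}, evaluated on the triple $((T(u),u),(T(v),v),(T(w),w))$ exactly as in the proof of the preceding theorem, the $V$-component of $[(T(u),u),(T(v),v),(T(w),w)]_{\mathfrak{g}\oplus V}$ is
\[
\rho(T(u),T(v))w-\epsilon(v,w)\rho(T(u),T(\alpha_V^{-1}\beta_V(w)))\alpha_V\beta_V^{-1}(v)+\epsilon(u,v+w)\rho(T(v),T(\alpha_V^{-1}\beta_V(w)))\alpha_V\beta_V^{-1}(u),
\]
which is precisely $[u,v,w]_T$, while the $\mathfrak{g}$-component is $T$ applied to the same expression (this is the subalgebra condition). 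Therefore $\pi([u,v,w]_T)=[\pi(u),\pi(v),\pi(w)]_{\mathfrak{g}\oplus V}$, so $\pi$ is an isomorphism of $3$-BiHom-Lie color algebras, and consequently $(V,[\cdot,\cdot,\cdot]_T,\epsilon,\alpha_V,\beta_V)$ is a $3$-BiHom-Lie color algebra.

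The only point requiring any thought is the identification of the transported bracket with $[\cdot,\cdot,\cdot]_T$, and this is already contained in the displayed computation in the proof of the preceding theorem, so no new calculation is needed. One could instead verify the two defining identities for $[\cdot,\cdot,\cdot]_T$ directly --- $\epsilon$-BiHom-skew-symmetry from the $\epsilon$-skew-symmetry of $\rho$ together with \eqref{cond-O-op-3-BiH-Lie-color1}, and the $\epsilon$-$3$-BiHom-Jacobi identity by unwinding it, via repeated use of \eqref{cond-rep-3-BiHom-Lie1}--\eqref{cond-rep-3-BiHom-Lie4} and the Kupershmidt identity \eqref{cond-O-op-3-BiH-Lie-color2}, into the $\epsilon$-$3$-BiHom-Jacobi identity for $\mathfrak{g}$ applied to $T(u_1),\dots,T(u_5)$ --- but that direct route carries essentially all the bookkeeping, which is exactly what the graph argument avoids.
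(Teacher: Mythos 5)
Your proposal is correct and follows the same route the paper intends: the corollary is exactly the transport of the bracket on the subalgebra $Gr(T)\subseteq\mathfrak{g}\ltimes_\rho V$ (the preceding theorem) through the bijection $u\mapsto(T(u),u)$, using $T\circ\alpha_V=\alpha\circ T$, $T\circ\beta_V=\beta\circ T$ to see that the graph is closed under the twisting maps and that $\alpha^{-1}\beta(T(w))=T(\alpha_V^{-1}\beta_V(w))$, so the $V$-component of the restricted bracket is precisely $[\cdot,\cdot,\cdot]_T$. No gap; your identification of the transported bracket matches the formula in the statement.
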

\begin{cor}\label{ind-3-BiH-Lie-color-RB}
Let $\mathcal{R}:\mathfrak{g}\to\mathfrak{g}$ be Rota-Baxter operator of weight $0$ on a $3$-BiHom-Lie color algebra $(\mathfrak{g},[\cdot,\cdot,\cdot],\epsilon,\alpha,\beta)$. Then, there exists a $3$-BiHom-Lie color algebra structure $[\cdot,\cdot,\cdot]_\mathcal{R}$ on $\mathfrak{g}$ given for all $x,y,z\in\mathcal{H}(\mathfrak{g})$ by
\begin{equation}\label{3-BiH-Lie-color-RB}
[x,y,z]_\mathcal{R}= [\mathcal{R}(x),\mathcal{R}(y),z]+[\mathcal{R}(x),y,\mathcal{R}(z)]+[x,\mathcal{R}(y),\mathcal{R}(z)].
\end{equation}
\end{cor}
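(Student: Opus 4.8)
The plan is to obtain this corollary as a special case of the preceding corollary (the one producing the bracket $[\cdot,\cdot,\cdot]_T$ on $V$), combined with the Example identifying weight-zero Rota--Baxter operators with Kupershmidt operators for the adjoint representation. First I would invoke that Example to regard $\mathcal{R}$ as a Kupershmidt operator on $(\mathfrak{g},[\cdot,\cdot,\cdot],\epsilon,\alpha,\beta)$ with respect to the representation $(\mathfrak{g},ad,\alpha,\beta)$; this presupposes that $\alpha,\beta$ are bijective (regularity), which is what makes $ad$ a representation in the sense used here and what the definition of a Kupershmidt operator on a $3$-BiHom-Lie color algebra requires. Applying the preceding corollary with $V=\mathfrak{g}$, $\rho=ad$, $T=\mathcal{R}$, $\alpha_V=\alpha$, $\beta_V=\beta$ then immediately produces a $3$-BiHom-Lie color algebra structure on $\mathfrak{g}$, and it only remains to check that \eqref{3-BiH-Lie-color-kup} collapses to \eqref{3-BiH-Lie-color-RB}.

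For that verification I would substitute $ad(a,b)(c)=[a,b,c]$ and use that $\mathcal{R}$ commutes with $\alpha$ and $\beta$, hence with $\alpha^{-1}\beta$, so that $T(\alpha_V^{-1}\beta_V(w))=\alpha^{-1}\beta(\mathcal{R}(w))$. The first summand of \eqref{3-BiH-Lie-color-kup} is then already $[\mathcal{R}(x),\mathcal{R}(y),z]$. For the second summand $-\epsilon(y,z)\,[\mathcal{R}(x),\alpha^{-1}\beta\mathcal{R}(z),\alpha\beta^{-1}(y)]$, I would bring the arguments into the twisted shape $[\beta(a),\beta(b),\alpha(c)]$ with $a=\beta^{-1}\mathcal{R}(x)$, $b=\alpha^{-1}\mathcal{R}(z)$, $c=\beta^{-1}(y)$, apply the $\epsilon$-BiHom-skew-symmetry \eqref{Bihom-epsilon-symm2}, note that since $\mathcal{R},\alpha,\beta$ are even the degrees of $b,c$ are $|z|,|y|$, and observe that the emerging factor $\epsilon(z,y)$ cancels $\epsilon(y,z)$ by \eqref{cond-bic1}, leaving $[\mathcal{R}(x),y,\mathcal{R}(z)]$. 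The third summand is treated the same way: one application of \eqref{Bihom-epsilon-symm2} moves $\mathcal{R}(z)$ into the last slot and one application of \eqref{Bihom-epsilon-symm1} swaps the first two entries, and the accumulated factor $\epsilon(x,y+z)\epsilon(z,x)\epsilon(y,x)$ reduces to $1$ via \eqref{cond-bic2} and \eqref{cond-bic1}, leaving $[x,\mathcal{R}(y),\mathcal{R}(z)]$. Adding the three pieces yields exactly \eqref{3-BiH-Lie-color-RB}.

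I expect the only genuine difficulty to be clerical rather than conceptual: keeping the commutation-factor exponents and the placement of $\alpha,\beta$ straight while invoking \eqref{Bihom-epsilon-symm1}--\eqref{Bihom-epsilon-symm2}, since those skew-symmetries are available only in the $\beta$-$\beta$-$\alpha$ twisted form, so each term must be conjugated by suitable powers of $\alpha^{\pm1},\beta^{\pm1}$ before the identities apply. Once that bookkeeping is organized, every $\epsilon$ factor cancels by \eqref{cond-bic1}--\eqref{cond-bic3} and the identification with \eqref{3-BiH-Lie-color-RB} is forced. As an alternative, one could prove the statement directly, expanding the $\epsilon$-$3$-BiHom-Jacobi identity for $[\cdot,\cdot,\cdot]_{\mathcal{R}}$ and repeatedly using the Rota--Baxter identity \eqref{RB-op-3-BiH-Lie-color} to pull $\mathcal{R}$ outside, in the spirit of the argument in Proposition \ref{ind-BiH-ass-color-RB}; this is more self-contained but considerably longer, so I would prefer the reduction above.
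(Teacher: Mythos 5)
Your proposal is correct and follows essentially the route the paper intends: the corollary is obtained by specializing the preceding Kupershmidt corollary to the adjoint representation via the Example identifying weight-zero Rota--Baxter operators with Kupershmidt operators for $(\mathfrak{g},ad,\alpha,\beta)$, under the (implicit) regularity/multiplicativity hypotheses needed for $ad$ to be a representation. Your explicit check that \eqref{3-BiH-Lie-color-kup} collapses to \eqref{3-BiH-Lie-color-RB} via \eqref{Bihom-epsilon-symm1}--\eqref{Bihom-epsilon-symm2} and the bicharacter identities is accurate (the $\epsilon$-factors cancel exactly as you state) and merely makes explicit a simplification the paper leaves tacit in its statement of \eqref{RB-op-3-BiH-Lie-color}.
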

\section{Noncommutative $3$-BiHom-Poisson color algebras} 
\label{sec-noncom3bihompoissoncolalgs}

In this section, we introduce the notion of noncommutative $3$-BiHom-Poisson color algebras and present some properties and related results. The representation theory and Kupershmidt operators of noncommutative $3$-BiHom-Poisson color algebras are also introduced.
\subsection{Noncommutative $3$-BiHom-Poisson color algebras and their representations}
\label{subsec-noncom3bihompoissoncolalgsreps}
\begin{defi}
{\it \bf{A noncommutative $3$-BiHom-Poisson color algebra}} is defined as a $6$-tuple $(\mathfrak{g},\{\cdot,\cdot,\cdot\},\mu,\epsilon,\alpha,\beta)$ such that
\begin{enumerate}[label=\upshape{\arabic*.}, ref=\upshape{\arabic*}, labelindent=5pt, leftmargin=*]
\item $(\mathfrak{g},\mu,\epsilon,\alpha,\beta)$ is a BiHom-associative color algebra.
\item $(\mathfrak{g},\{\cdot,\cdot,\cdot\},\epsilon,\alpha,\beta)$ is a $3$-BiHom-Lie color algebra.
\item  The BiHom-Leibniz rule holds, i.e.,
\begin{equation}\label{BiHom-Leibniz-role}
\{\alpha\beta(x),\alpha\beta(y),\mu(z,t)\}=\mu(\{\beta(x),\beta(y),z\},\beta(t))+\epsilon(x+y,z)\mu(\beta(z),\{\alpha(x),\alpha(y),t\}).
\end{equation}
\end{enumerate}

A $3$-BiHom-Poisson color algebra is a noncommutative $3$-BiHom-Poisson color algebra $(\mathfrak{g},\{\cdot,\cdot,\cdot\},\mu,\epsilon,\alpha,\beta)$ in which $(\mathfrak{g},\mu,\epsilon,\alpha,\beta)$ is a BiHom-commutative BiHom-associative color algebra.\\
\begin{rmk}\
\begin{enumerate}[label=\upshape{\arabic*.}, ref=\upshape{\arabic*}, labelindent=5pt, leftmargin=*]
\item  If $\alpha=\beta=Id_\mathfrak{g}$, we recover the $3$-Poisson color algebra structure.
\item If $\alpha=\beta$ where $\alpha$ is surjective, we recover the noncommutative $3$-Hom-Poisson color algebras structures.
\item If the twisting maps $\alpha$ and $\beta$ are bijective, we call $(\mathfrak{g},\{\cdot,\cdot,\cdot\},\mu,\epsilon,\alpha,\beta)$ regular.
\end{enumerate}
\end{rmk}
\begin{ex}
Let $\Gamma=\mathbb{Z}_2,\;\epsilon(i,j)=(-1)^{ij}$. Consider the two-dimensional graded vector space $\mathfrak{g} = \mathfrak{g}_0\oplus\mathfrak{g}_1=<e_1>\oplus<e_2>$. Define the even linear maps $\alpha,\beta:\mathfrak{g}\to\mathfrak{g}$ by $$\alpha(e_1)=e_1,\;\alpha(e_2)=0\;\;\text{and}\;\;\beta(e_1)=e_1,\;\beta(e_2)=-e_2,$$ and the bracket $\{\cdot,\cdot,\cdot\}:\mathfrak{g}\times\mathfrak{g}\times\mathfrak{g}\to\mathfrak{g}$ by 
$\{e_1,e_1,e_2\}=e_2$ with the other brackets being zeros. Then $(\mathfrak{g}, [\cdot,\cdot, \cdot],\alpha,\beta)$ is a multiplicative $3$-BiHom-Lie superalgebra.
Let $\mu:\mathfrak{g}\times\mathfrak{g}\to\mathfrak{g}$ be an even bilinear map defined by
$$\mu(e_1,e_1)=e_1.$$
Then $(\mathfrak{g},\{\cdot,\cdot,\cdot\},\mu,\epsilon,\alpha,\beta)$ is a $3$-BiHom-Poisson color algebra.
\end{ex}
\end{defi}
\begin{thm}
If $(\mathfrak{g},\{\cdot,\cdot,\cdot\},\mu,\epsilon)$ is a noncommutative $3$-Poisson color algebra and $\alpha,\beta:\mathfrak{g}\to\mathfrak{g}$ are two even commuting algebra morphisms on $\mathfrak{g}$, 
then $(\mathfrak{g},\{\cdot,\cdot,\cdot\}_{\alpha,\beta},\mu_{\alpha,\beta},\epsilon,\alpha,\beta)$ is a noncommutative $3$-BiHom-Poisson color algebra where $\mu_{\alpha,\beta}$ and $\{\cdot,\cdot,\cdot\}_{\alpha,\beta}$ are defined respectively by   \eqref{bihom-comm-color-ass-alg} and \eqref{twist-3-BiHom-Lie}.
\end{thm}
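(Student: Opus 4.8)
The plan is to obtain the three defining properties of a noncommutative $3$-BiHom-Poisson color algebra in turn: the first two come for free from the twisting results already established, and only the third, the BiHom-Leibniz rule, requires a direct check. Since $\alpha$ and $\beta$ are commuting algebra morphisms of the noncommutative $3$-Poisson color algebra $(\mathfrak{g},\{\cdot,\cdot,\cdot\},\mu,\epsilon)$, they in particular commute and satisfy $\mu(\alpha(x),\alpha(y))=\alpha(\mu(x,y))$ and $\mu(\beta(x),\beta(y))=\beta(\mu(x,y))$, so Proposition \ref{pro-twist-assoc-color} applied to the associative color algebra $(\mathfrak{g},\mu,\epsilon)$ gives that $(\mathfrak{g},\mu_{\alpha,\beta},\epsilon,\alpha,\beta)$, with $\mu_{\alpha,\beta}(x,y)=\mu(\alpha(x),\beta(y))$ as in \eqref{twist-bihom-ass}, is a BiHom-associative color algebra. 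Likewise, Theorem \ref{thm-twist-assoc-color} applied to the $3$-Lie color algebra $(\mathfrak{g},\{\cdot,\cdot,\cdot\},\epsilon)$ gives that $(\mathfrak{g},\{\cdot,\cdot,\cdot\}_{\alpha,\beta},\epsilon,\alpha,\beta)$, with $\{x,y,z\}_{\alpha,\beta}=\{\alpha(x),\alpha(y),\beta(z)\}$ as in \eqref{twist-3-BiHom-Lie}, is a multiplicative $3$-BiHom-Lie color algebra. This settles conditions 1 and 2 in the definition.

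The remaining task is the BiHom-Leibniz rule \eqref{BiHom-Leibniz-role} for the twisted operations. Unwinding the definitions of $\{\cdot,\cdot,\cdot\}_{\alpha,\beta}$ and $\mu_{\alpha,\beta}$, using that $\beta$ is a morphism of $\mu$, and using $\alpha\beta=\beta\alpha$, the left-hand side of \eqref{BiHom-Leibniz-role} collapses to $\{\alpha^2\beta(x),\alpha^2\beta(y),\mu(\alpha\beta(z),\beta^2(t))\}$. I would then apply the classical (untwisted) Leibniz identity of $(\mathfrak{g},\{\cdot,\cdot,\cdot\},\mu,\epsilon)$ --- that is, \eqref{BiHom-Leibniz-role} specialised to $\alpha=\beta=Id_{\mathfrak{g}}$ --- with the arguments $x\mapsto\alpha^2\beta(x)$, $y\mapsto\alpha^2\beta(y)$, $z\mapsto\alpha\beta(z)$, $t\mapsto\beta^2(t)$, noting that $\alpha,\beta$ are even so the commutation factor stays $\epsilon(x+y,z)$. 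This rewrites the left-hand side as
\begin{align*}
&\mu\big(\{\alpha^2\beta(x),\alpha^2\beta(y),\alpha\beta(z)\},\beta^2(t)\big)\\
&\qquad+\epsilon(x+y,z)\,\mu\big(\alpha\beta(z),\{\alpha^2\beta(x),\alpha^2\beta(y),\beta^2(t)\}\big).
\end{align*}
Unwinding the right-hand side of \eqref{BiHom-Leibniz-role} for the twisted operations in the same way --- pushing $\alpha$ and $\beta$ through the bracket, which is legitimate since they are morphisms of $\{\cdot,\cdot,\cdot\}$ as well --- reproduces exactly these two terms: the first from $\mu_{\alpha,\beta}(\{\beta(x),\beta(y),z\}_{\alpha,\beta},\beta(t))$ and the second from $\epsilon(x+y,z)\,\mu_{\alpha,\beta}(\beta(z),\{\alpha(x),\alpha(y),t\}_{\alpha,\beta})$. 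Hence both sides agree, \eqref{BiHom-Leibniz-role} holds, and the three conditions together give the claim.

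Since both inputs to the construction are classical color structures and the twisting maps are honest morphisms, nothing here is genuinely hard; the one place demanding care --- and where an arithmetic slip is easiest --- is the bookkeeping of which power of $\alpha$ and $\beta$ lands on which argument when the twisted operations are composed, which is governed throughout by $\alpha\beta=\beta\alpha$ and by the morphism identities for $\alpha$ and $\beta$ with respect to both $\mu$ and $\{\cdot,\cdot,\cdot\}$. No bijectivity of $\alpha$ or $\beta$ is required. One may additionally observe that if $(\mathfrak{g},\mu,\epsilon)$ is commutative then $(\mathfrak{g},\mu_{\alpha,\beta},\epsilon,\alpha,\beta)$ is BiHom-commutative in the sense of \eqref{bihom-comm-color-ass-alg}, so the same argument then yields a $3$-BiHom-Poisson color algebra and not merely a noncommutative one.
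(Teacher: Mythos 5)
Your proposal is correct and follows essentially the same route as the paper: conditions 1 and 2 are obtained by citing Proposition \ref{pro-twist-assoc-color} and Theorem \ref{thm-twist-assoc-color}, and the BiHom-Leibniz rule is verified by exactly the computation the paper performs, namely reducing the left-hand side to $\{\alpha^2\beta(x),\alpha^2\beta(y),\mu(\alpha\beta(z),\beta^2(t))\}$, applying the untwisted Leibniz identity, and refolding into the twisted operations (the paper's reference to \eqref{bihom-comm-color-ass-alg} for $\mu_{\alpha,\beta}$ is a typo for \eqref{twist-bihom-ass}, which is the definition you correctly use).
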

\begin{proof}
According to Proposition \ref{pro-twist-assoc-color} and Theorem \ref{thm-twist-assoc-color}, we deduce that $\mu_{\alpha,\beta}$ defines a BiHom-associative color algebra structure on $\mathfrak{g}$ and $\{\cdot,\cdot,\cdot\}_{\alpha,\beta}$ defines a $3$-BiHom-Lie color algebra structure on $\mathfrak{g}$. It remains to be proved that the BiHom-Leibniz identity is satisfied. For $x,y,z,t\in\mathcal{H}(\mathfrak{g})$, it holds that 
\begin{align*}
& \{\alpha\beta(x),\alpha\beta(y),\mu_{\alpha,\beta}(z,t)\}_{\alpha,\beta}=
\{\alpha^2\beta(x),\alpha^2\beta(y),\beta\mu(\alpha(z),\beta(t))\} \\
&= \{\alpha^2\beta(x),\alpha^2\beta(y),\mu(\alpha\beta(z),\beta^2(t))\}\\
&=\mu(\{\alpha^2\beta(x),\alpha^2\beta(y),\alpha\beta(z)\},\beta^2(t))+\epsilon(x+y,z) \mu(\alpha\beta(z),
\{\alpha^2\beta(x),\alpha^2\beta(y),\beta^2(t)\}) \\
&= \mu_{\alpha,\beta}(\{\beta(x),\beta(y),z\}_{\alpha,\beta},\beta(t))+\epsilon(x+y,z)\mu_{\alpha,\beta}(\beta(z),
\{\alpha(x),\alpha(y),t\}_{\alpha,\beta}),
\end{align*}
and thus the identity \eqref{BiHom-Leibniz-role} is satisfied. The theorem is proved.
\end{proof}
\begin{defi}
A \textbf{representation} $(V,\rho,\mathfrak l,\mathfrak r,\alpha_V,\beta_V)$ 
of the noncommutative $3$-BiHom-Poisson color algebra $(\mathfrak{g}, \{\cdot,\cdot,\cdot \},\mu,\epsilon,\alpha,\beta)$ 
 consist of a $\Gamma$-graded vector space $V$, two even endomorphisms $\alpha_V,\beta_V\in End(V)$,
two even linear maps $\mathfrak l,\mathfrak r:\mathfrak{g}\to End(V)$ and an even $\epsilon$-skew-symmetric bilinear map $\rho:\mathfrak{g}\times\mathfrak{g}\to End(V)$ such that
\begin{enumerate}[label=\upshape{\arabic*.}, ref=\upshape{\arabic*}, labelindent=5pt, leftmargin=*]
\item $(V,\mathfrak l,\mathfrak r,\alpha_V)$ is a bimodule of the BiHom-associative color algebra $(\mathcal{A},\mu,\epsilon,\alpha,\beta)$;
\item $(V,\rho,\alpha_V,\beta_V)$ is a representation of the $3$-BiHom-Lie color algebra
$(\mathfrak{g}, \{\cdot,\cdot,\cdot\},\epsilon,\alpha,\beta)$;
\item The following equalities hold for all $x,y,z\in\mathcal{H}(\mathfrak{g})$:
\begin{eqnarray}
& \rho(\alpha\beta(x),\alpha\beta(y))\mathfrak l(z)=
\mathfrak l(\{\beta(x),\beta(y),z\})\beta_V+\epsilon(x+y,z)\mathfrak l(\beta(z))\rho(\alpha(x),\alpha(y)),\label{repr-BiHom-colr-Poisson1}\\
& \rho(\alpha\beta(x),\alpha\beta(y))\mathfrak r(z)=\epsilon(x+y,z)\mathfrak r(\beta(z))\rho(\beta(x),\beta(y))+\mathfrak r(\{\alpha(x),\alpha(y),z\})\beta_V,
\label{repr-BiHom-colr-Poisson2}\\
&\rho(\alpha\beta(x),\mu(\beta(y),\beta(z)))\alpha_V^2\beta_V=\epsilon(x+y,z)\mathfrak r(\alpha\beta(z))\rho(x,\beta^2(y))\alpha_V\beta_V\label{repr-BiHom-colr-Poisson3}\\
&+\epsilon(x,y)\mathfrak l(\alpha\beta(y))\rho(\alpha(x),\beta(z))\alpha_V^2.\nonumber
\end{eqnarray}
\end{enumerate}
\end{defi}
\begin{rmk}
If $\alpha$ and $\beta_V$ are bijective, then the condition \eqref{repr-BiHom-colr-Poisson3} is equivalent to
\begin{eqnarray}
&\rho(\alpha\beta(x),\mu(\alpha^{-1}\beta(y),\alpha^{-1}\beta(z)))\alpha_V^2=\epsilon(x+y,z)\mathfrak r(\beta(z))\rho(x,\alpha^{-1}\beta^2(y))\alpha_V\label{equiv-repr-BiHom-colr-Poisson3}\\
&+\epsilon(x,y)\mathfrak l(\beta(y))\rho(\alpha(x),\alpha^{-1}\beta(z))\alpha_V^2\beta_V^{-1}.\nonumber
\end{eqnarray}
\end{rmk}

\begin{thm}
Let $(\mathfrak{g},\{\cdot,\cdot,\cdot\},\mu,\epsilon,\alpha,\beta)$ be a noncommutative $3$-BiHom-Poisson color algebra where $\alpha$ is bijective, $V$ is a $\Gamma$-graded vector space and $(\alpha_V,\beta_V)\in End(V)\times GL(V)$. Let $\rho:\mathfrak{g}\times\mathfrak{g}\rightarrow End(V)$ be an even $\epsilon$-skew-symmetric bilinear map and $\mathfrak l,\mathfrak r:\mathfrak{g}\rightarrow End(V)$ be two even linear maps. Then, the tuple $(V,\rho,\mathfrak l,\mathfrak r,\alpha_V,\beta_V)$ is a representation of
$(\mathfrak{g},\{\cdot,\cdot,\cdot\},\mu,\epsilon,\alpha,\beta)$ if and only if, the direct sum $\mathfrak{g}\oplus V$  turns into a noncommutative $3$-BiHom-Poisson color algebra by defining the linear maps defined by
\eqref{alpha-direct-sum}-\eqref{beta-direct-sum} and two multiplications defined by \eqref{mu-direct-sum}, \eqref{crochet-direct-sum}.
\end{thm}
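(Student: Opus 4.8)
The plan is to check the three defining axioms of a noncommutative $3$-BiHom-Poisson color algebra on $\mathfrak{g}\oplus V$ separately: the first two reduce to results already established in this section, and only the BiHom-Leibniz rule requires a direct computation, which produces precisely the three compatibility identities \eqref{repr-BiHom-colr-Poisson1}--\eqref{repr-BiHom-colr-Poisson3}.

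First I would observe that, equipping $\mathfrak{g}\oplus V$ with the multiplication $\mu_{\mathfrak{g}\oplus V}$ from \eqref{mu-direct-sum} and the twisting maps \eqref{alpha-direct-sum}--\eqref{beta-direct-sum}, Proposition \ref{3-BiHom-ass-direct-sum} (and the ``only if'' reading of its proof) shows that $(\mathfrak{g}\oplus V,\mu_{\mathfrak{g}\oplus V},\epsilon,\alpha\oplus\alpha_V,\beta\oplus\beta_V)$ is a BiHom-associative color algebra if and only if $(V,\mathfrak l,\mathfrak r,\alpha_V,\beta_V)$ is a bimodule of $(\mathfrak{g},\mu,\epsilon,\alpha,\beta)$, that is, axiom $1$ of a representation of the Poisson color algebra. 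Next, since $\alpha$ is bijective and $\beta_V\in GL(V)$, Proposition \ref{struc-3-BiHom-semidirect} applies to the bracket $\{\cdot,\cdot,\cdot\}_{\mathfrak{g}\oplus V}$ from \eqref{crochet-direct-sum} with the same twisting maps, and gives that $(\mathfrak{g}\oplus V,\{\cdot,\cdot,\cdot\}_{\mathfrak{g}\oplus V},\epsilon,\alpha\oplus\alpha_V,\beta\oplus\beta_V)$ is a $3$-BiHom-Lie color algebra if and only if $(V,\rho,\alpha_V,\beta_V)$ is a representation of $(\mathfrak{g},\{\cdot,\cdot,\cdot\},\epsilon,\alpha,\beta)$, that is, axiom $2$. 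Granting axioms $1$ and $2$, it then suffices to prove that the BiHom-Leibniz rule \eqref{BiHom-Leibniz-role} holds on $\mathfrak{g}\oplus V$ if and only if \eqref{repr-BiHom-colr-Poisson1}, \eqref{repr-BiHom-colr-Poisson2} and \eqref{repr-BiHom-colr-Poisson3} hold.

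For this last point I would feed four homogeneous elements $x+u_1,\,y+u_2,\,z+u_3,\,t+u_4$ of $\mathfrak{g}\oplus V$ into both sides of \eqref{BiHom-Leibniz-role}, expanding everything with \eqref{mu-direct-sum}, \eqref{crochet-direct-sum}, the fact that $\alpha,\beta,\alpha^{-1}\beta$ are algebra morphisms (so that $\alpha^{-1}\beta(\mu(z,t))=\mu(\alpha^{-1}\beta(z),\alpha^{-1}\beta(t))$) and $\alpha_V\beta_V=\beta_V\alpha_V$. The $\mathfrak{g}$-components of the two sides coincide because \eqref{BiHom-Leibniz-role} already holds in $\mathfrak{g}$. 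The $V$-components are multilinear in $u_1,u_2,u_3,u_4$, so they agree for all choices of the $u_i$ precisely when, for each $i$, the coefficient of $u_i$ agrees after setting the other three to $0$. The coefficient of $u_4$ (the vector part of the right-hand factor of $\mu(z,t)$) reproduces \eqref{repr-BiHom-colr-Poisson1}; the coefficient of $u_3$ (the vector part of the left-hand factor) reproduces \eqref{repr-BiHom-colr-Poisson2}; and the coefficient of $u_1$ --- equivalently, by the $\epsilon$-skew-symmetry of $\rho$, the coefficient of $u_2$ --- reproduces \eqref{repr-BiHom-colr-Poisson3}, or rather its bijective reformulation \eqref{equiv-repr-BiHom-colr-Poisson3}, once the invertible twist $\beta_V$ is cancelled. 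Reading this chain of equivalences backwards gives the converse, and the theorem follows.

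The part I expect to be delicate is purely the $\epsilon$-bookkeeping: each time a vector entry is commuted past an algebra entry in \eqref{mu-direct-sum} or \eqref{crochet-direct-sum} a commutation factor is generated, and one must also carry the powers of $\alpha_V,\beta_V$ (and of $\alpha^{-1}\beta$) coming from \eqref{crochet-direct-sum}, so that the collected coefficients match the stated identities on the nose --- in particular the twists $\alpha_V^2\beta_V$, $\alpha_V\beta_V$, $\alpha_V^2$ in \eqref{repr-BiHom-colr-Poisson3}. The bijectivity hypotheses on $\alpha$ and $\beta_V$ are used exactly here, to strip off these twists and to pass between \eqref{repr-BiHom-colr-Poisson3} and \eqref{equiv-repr-BiHom-colr-Poisson3}; beyond this bookkeeping no new idea is needed.
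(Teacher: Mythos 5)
Your proposal is correct and follows essentially the same route as the paper: invoke Proposition \ref{3-BiHom-ass-direct-sum} and Proposition \ref{struc-3-BiHom-semidirect} for the BiHom-associative and $3$-BiHom-Lie structures on $\mathfrak{g}\oplus V$, then expand the BiHom-Leibniz rule \eqref{BiHom-Leibniz-role} on homogeneous elements of $\mathfrak{g}\oplus V$ and identify the coefficients of the vector components, which yield exactly \eqref{repr-BiHom-colr-Poisson1}, \eqref{repr-BiHom-colr-Poisson2} and \eqref{repr-BiHom-colr-Poisson3} (the last via its bijective reformulation \eqref{equiv-repr-BiHom-colr-Poisson3}). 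This matches the paper's proof, including the role of the bijectivity of $\alpha$ and $\beta_V$ in handling the twists.
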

\begin{proof}
By Propositions \ref{3-BiHom-ass-direct-sum} and \ref{struc-3-BiHom-semidirect}, we show that $(\mathfrak{g}\oplus V,\mu_{\mathfrak{g}\oplus V},\epsilon,\alpha\oplus\alpha_V,\beta\oplus\beta_V)$ is a BiHom-associative color algebra and $(\mathfrak{g}\oplus V,\{\cdot,\cdot,\cdot\}_{\mathfrak{g}\oplus V},\epsilon,\alpha\oplus\alpha_V,\beta\oplus\beta_V)$ is a $3$-BiHom-Lie color algebra.

Now, we must demonstrate that the Hom-Leibniz identity \eqref{BiHom-Leibniz-role} is satisfied on $\mathfrak{g}\oplus V$. For $x,y,z,t\in\mathcal{H}(\mathfrak{g})$ and $u,v,w,\theta\in\mathcal{H}(V)$, 
\begin{align*}
& \{(\alpha\oplus\alpha_V)(\beta\oplus\beta_V)(x+u),(\alpha\oplus\alpha_V)(\beta\oplus\beta_V)(y+v),\mu_{\mathfrak{g}\oplus V}(z+w,t+\theta)\}_{\mathfrak g\oplus V}\\
& =\{\alpha\beta(x)+\alpha_V\beta_V(u),\alpha\beta(y)+\alpha_V\beta_V(v),\mu(z,t)+\mathfrak l(z)\theta+\mathfrak r(t)w\}_{\mathfrak{g}\oplus V}\\
& =\{\alpha\beta(x),\alpha\beta(y),\mu(z,t)\}+\rho(\alpha\beta(x),\alpha\beta(y))\big(\mathfrak l(z)\theta+\mathfrak r(t)w\big)\\
&\quad -\epsilon(v,z+t)\rho(\alpha\beta(x),\mu(\alpha^{-1}\beta(z),\alpha^{-1}\beta(t)))\alpha_V^2(v)\\
&\quad +\epsilon(u,y+z+t)\rho(\alpha\beta(y),\mu(\alpha^{-1}\beta(z),\alpha^{-1}\beta(t)))\alpha_V^2(u). \\
&\mu_{\mathfrak{g}\oplus V}\Big(\{\beta\oplus\beta_V(x+u),\beta\oplus\beta_V(y+v),z+w\}_{\mathfrak{g}\oplus V},\beta\oplus\beta_V(t+\theta)\Big)\\
&=\mu_{\mathfrak{g}\oplus V}\Big(\{\beta(x),\beta(y),z\}+\rho(\beta(x),\beta(y))w-\rho(x,\alpha^{-1}\beta(z))\alpha_V(v)\\
&\hspace{5cm} +\rho(y,\alpha^{-1}\beta(z))\alpha_V(u),\beta(t)+\beta_V(\theta)\Big)\\
& =\mu(\{\beta(x),\beta(y),z\},\beta(t))+\mathfrak l(\{\beta(x),\beta(y),z\})\beta_V(\theta)\\
&\quad +\epsilon(t,x+y+w)\mathfrak r(\beta(t))\Big(\rho(\beta(x),\beta(y))w-\epsilon(z,v)\rho(x,\alpha^{-1}\beta(z))\alpha_V(v)\\ &\hspace{6cm} +\epsilon(u,y+z)\rho(y,\alpha^{-1}\beta(z))\alpha_V(u)\Big). \\
&  \mu_{\mathfrak{g}\oplus V}\Big(\beta\oplus\beta_V(z+w),
\{\alpha\oplus\alpha_V(x+u), \alpha\oplus\alpha V(y+v), t+\theta\} {\mathfrak{g}\oplus V}\Big)\\
&=  \mu(\beta(z),\{\alpha(x),\alpha(y),t\})+\epsilon(t,w)\mathfrak r(\{\alpha(x),\alpha(y),t\})\beta_V(w)\\
&\quad +\mathfrak l(\beta(z))\Big(\rho(\alpha(x),\alpha(y))\theta-\epsilon(t,v)\rho(\alpha(x),\alpha^{-1}\beta(t))\alpha_V^2\beta_V^{-1}(v)\\
&\quad +\epsilon(u,y+t)\rho(\alpha(y),\alpha^{-1}\beta(t))\alpha_V^2\beta_V^{-1}(u)\Big).
\end{align*}
By the fact that $(\mathfrak{g},\{\cdot,\cdot,\cdot\},\mu,\epsilon,\alpha,\beta)$ is a noncommutative $3$-BiHom-Poisson color algebra, the BiHom-Leibniz identity \eqref{BiHom-Leibniz-role} is satisfied by $\mu_{\mathfrak g\oplus V}$ and $\{\cdot,\cdot,\cdot\}_{\mathfrak g\oplus V}$, if and only if
\begin{align*}
\rho(\alpha\beta(x),\alpha\beta(y))\mathfrak l(z)\theta&=\mathfrak l(\{\beta(x),\beta(y),z\})\beta_V(\theta) +\varepsilon(x+y,z)l(\beta(z))\rho(\alpha(x),\alpha(y))\theta,
\end{align*}
for any $x,y,z\in\mathcal{H}(\mathfrak g)$ and $\theta\in\mathcal{H}(V)$, which implies that the condition \eqref{repr-BiHom-colr-Poisson1} is satisfied. 

Similarly, we obtain, for any $x,y,z,t\in\mathcal{H}(\mathfrak g)$ and $v,w\in\mathcal{H}(V)$,
\begin{align*}
\rho(\alpha\beta(x),\alpha\beta(y))\mathfrak r(t)w &= \varepsilon(t,x+y)\mathfrak r(\beta(t))\rho(\beta(x),\beta(y))w+ \mathfrak r(\{\alpha(x),\alpha(y),t\})\beta_V(w),
\\
\rho(\alpha\beta(x),\mu(\alpha^{-1}\beta(z),\alpha^{-1}\beta(t)))\alpha_V^2(v)&=\varepsilon(t,x+z) \mathfrak r(\beta(t))\rho(x,\alpha^{-1}\beta(z))\alpha_V(v)\\&\quad+\varepsilon(x,z)\mathfrak l(\beta(z))\rho(\alpha(x),\alpha^{-1}\beta(t))\alpha_V^2\beta_V^{-1}(v),
\end{align*}
which gives the conditions \eqref{repr-BiHom-colr-Poisson2} and \eqref{repr-BiHom-colr-Poisson3} respectively. Thus, $(V,\rho,\mathfrak l,\mathfrak r,\alpha_V,\beta_V)$ is a representation of $(\mathfrak{g},\{\cdot,\cdot,\cdot\},\mu,\epsilon,\alpha,\beta)$. 
\end{proof}
\subsection{Kupershmidt operators on noncommutative $3$-BiHom-Poisson color algebras}
\begin{defi}
Let $(V,\rho,\mathfrak l,\mathfrak r,\alpha_V,\beta_V)$ be a representation of the noncommutative $3$-BiHom-Poisson color algebra $(\mathfrak{g},\{\cdot,\cdot,\cdot\},\mu,\epsilon,\alpha,\beta)$. An even linear map $T:V\to\mathfrak{g}$ is called {\it \bf{Kupershmidt operator}} of $\mathfrak{g}$ with respect to $(V,\rho,\mathfrak l,\mathfrak r,\alpha_V,\beta_V)$ if
\begin{enumerate}[label=\upshape{\arabic*)}, ref=\upshape{\arabic*}, labelindent=5pt, leftmargin=*]
\item $T$ is a Kupershmidt operator on the BiHom-associative color algebra $(\mathfrak{g},\mu,\epsilon,\alpha,\beta)$ with respect to the bimodule $(V,\mathfrak l,\mathfrak r,\alpha_V,\beta_V)$;
\item $T$ is a Kupershmidt on the $3$-BiHom-Lie color algebra $(\mathfrak{g},\{\cdot,\cdot,\cdot\},\epsilon,\alpha,\beta)$ with respect to the representation $(V,\rho,\alpha_V,\beta_V)$.
\end{enumerate}
If $\mathcal{R}$ is a Kupershmidh operator of $\mathfrak{g}$ with respect to representation 
$(V,ad,\mathfrak L,\mathfrak R,\alpha,\beta),$ 
then $\mathcal{R}$ is called a Rota-Baxter operator of weight $0$ on $\mathfrak g$, that is
\begin{enumerate}[label=\upshape{\arabic*)}, ref=\upshape{\arabic*}, labelindent=5pt, leftmargin=*]
\item $\mathcal{R}$ is a Rota-Baxter operator of weight $0$ on BiHom-associative color algebra 
$(\mathfrak g,\mu,\epsilon,\alpha,\beta)$;
\item $\mathcal{R}$ is a Rota-Baxter operator of weight $0$ on $3$-BiHom-Lie color algebra 
$(\mathfrak g,\{\cdot,\cdot,\cdot\},\epsilon,\alpha,\beta).$
\end{enumerate}
\end{defi}
\begin{thm}
Let $\mathcal{R}:\mathfrak{g}\to\mathfrak{g}$ be Rota-Baxter operator of weight $0$ on a noncommutative $3$-BiHom-Poisson color algebra $(\mathfrak{g},\{\cdot,\cdot,\cdot\},\mu,\epsilon,\alpha,\beta)$. Then, $(\mathfrak{g},\{\cdot,\cdot,\cdot\}_\mathcal{R},\mu_\mathcal{R},\epsilon,\alpha,\beta)$ is a noncommutative $3$-BiHom-Poisson color algebra where $\{\cdot,\cdot,\cdot\}_\mathcal{R}$ is defined by   \eqref{3-BiH-Lie-color-RB} and $\mu_\mathcal{R}$ given by   \eqref{RB-op-BiH-ass-color}.
\end{thm}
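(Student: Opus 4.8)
The plan is to verify the three conditions in the definition of a noncommutative $3$-BiHom-Poisson color algebra for the tuple $(\mathfrak{g},\{\cdot,\cdot,\cdot\}_\mathcal{R},\mu_\mathcal{R},\epsilon,\alpha,\beta)$. By the very definition of a Rota-Baxter operator of weight $0$ on a noncommutative $3$-BiHom-Poisson color algebra, $\mathcal{R}$ is simultaneously a Rota-Baxter operator of weight $0$ on the BiHom-associative color algebra $(\mathfrak{g},\mu,\epsilon,\alpha,\beta)$ and on the $3$-BiHom-Lie color algebra $(\mathfrak{g},\{\cdot,\cdot,\cdot\},\epsilon,\alpha,\beta)$. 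Hence Proposition \ref{ind-BiH-ass-color-RB} already yields that $(\mathfrak{g},\mu_\mathcal{R},\epsilon,\alpha,\beta)$ is a BiHom-associative color algebra, and Corollary \ref{ind-3-BiH-Lie-color-RB} yields that $(\mathfrak{g},\{\cdot,\cdot,\cdot\}_\mathcal{R},\epsilon,\alpha,\beta)$ is a $3$-BiHom-Lie color algebra. The only remaining point, and the core of the proof, is the BiHom-Leibniz rule \eqref{BiHom-Leibniz-role} for the pair $(\mu_\mathcal{R},\{\cdot,\cdot,\cdot\}_\mathcal{R})$.

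The ingredients I would prepare first are: the commutation relations $\mathcal{R}\circ\alpha=\alpha\circ\mathcal{R}$ and $\mathcal{R}\circ\beta=\beta\circ\mathcal{R}$, and hence $\mathcal{R}\circ\alpha\beta=\alpha\beta\circ\mathcal{R}$; the two defining Rota-Baxter identities \eqref{RB-op-BiH-ass-color} and \eqref{RB-op-3-BiH-Lie-color} rewritten in ``morphism form'' as $\mathcal{R}\big(\mu_\mathcal{R}(a,b)\big)=\mu(\mathcal{R}(a),\mathcal{R}(b))$ and $\mathcal{R}\big(\{a,b,c\}_\mathcal{R}\big)=\{\mathcal{R}(a),\mathcal{R}(b),\mathcal{R}(c)\}$; and the original BiHom-Leibniz rule \eqref{BiHom-Leibniz-role} of $\mathfrak{g}$. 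I would also note at the outset that $\mathcal{R}$ is even, so it preserves $\Gamma$-degrees; consequently the only bicharacter coefficient occurring anywhere in the computation is $\epsilon(x+y,z)$, and the bookkeeping of $\epsilon$-factors is trivial.

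The computation then proceeds by expanding both sides of \eqref{BiHom-Leibniz-role}. On the left side, I would substitute $\mu_\mathcal{R}(z,t)=\mu(\mathcal{R}(z),t)+\mu(z,\mathcal{R}(t))$ into the last argument of $\{\alpha\beta(x),\alpha\beta(y),\mu_\mathcal{R}(z,t)\}_\mathcal{R}$ and expand the outer bracket through \eqref{3-BiH-Lie-color-RB} into its three $\mathcal{R}$-decorated summands; pushing $\mathcal{R}$ through $\alpha\beta$, collapsing $\mathcal{R}\big(\mu_\mathcal{R}(z,t)\big)=\mu(\mathcal{R}(z),\mathcal{R}(t))$ in the two summands where $\mathcal{R}$ lands on the $\mu$-product, and splitting the remaining summand, one obtains four $3$-BiHom-Lie brackets, each having $\alpha\beta$- or $\alpha\beta\circ\mathcal{R}$-images of $x,y$ in the first two slots and a $\mu$-product of $\mathcal{R}$-decorated copies of $z,t$ in the last slot. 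Applying the original BiHom-Leibniz rule \eqref{BiHom-Leibniz-role} to each of these four brackets turns the left side into a sum of eight $\mu$-products of $3$-brackets. On the right side, I would expand $\mu_\mathcal{R}(\{\beta(x),\beta(y),z\}_\mathcal{R},\beta(t))$ and $\epsilon(x+y,z)\,\mu_\mathcal{R}(\beta(z),\{\alpha(x),\alpha(y),t\}_\mathcal{R})$ using \eqref{str-op-BiH-ass-color-ind} and \eqref{3-BiH-Lie-color-RB}, collapsing the subterms $\mathcal{R}\big(\{\beta(x),\beta(y),z\}_\mathcal{R}\big)=\{\beta\mathcal{R}(x),\beta\mathcal{R}(y),\mathcal{R}(z)\}$ and $\mathcal{R}\big(\{\alpha(x),\alpha(y),t\}_\mathcal{R}\big)=\{\alpha\mathcal{R}(x),\alpha\mathcal{R}(y),\mathcal{R}(t)\}$ and again pushing $\mathcal{R}$ past $\alpha$ and $\beta$; this likewise yields eight $\mu$-monomials. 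A term-by-term comparison of the two lists of eight shows that they coincide, which proves \eqref{BiHom-Leibniz-role} and completes the argument.

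The obstacle I anticipate is entirely organizational rather than conceptual. After the expansions, both sides are sums of eight $\mu$-products built from $\{\cdot,\cdot,\cdot\}$, $\mathcal{R}$, $\alpha$ and $\beta$, and the delicate point is to apply the two Rota-Baxter collapses $\mathcal{R}\circ\mu_\mathcal{R}=\mu\circ(\mathcal{R}\otimes\mathcal{R})$ and $\mathcal{R}\circ\{\cdot,\cdot,\cdot\}_\mathcal{R}=\{\cdot,\cdot,\cdot\}\circ(\mathcal{R}\otimes\mathcal{R}\otimes\mathcal{R})$ exactly at the spots where a sub-expression of the form $\mu(\mathcal{R}(\cdot),\mathcal{R}(\cdot))$ or $\{\mathcal{R}(\cdot),\mathcal{R}(\cdot),\mathcal{R}(\cdot)\}$ appears, so that the two lists match with no residual terms left over. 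Keeping careful track of which argument carries $\alpha$, $\beta$, $\alpha\beta$, or $\mathcal{R}$ is where the care is needed; once the collapses are applied consistently and the observation that $\mathcal{R}$ is even is invoked for the $\epsilon$-coefficients, the identity closes with nothing further required.
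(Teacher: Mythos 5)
Your proposal is correct and follows essentially the same route as the paper: it cites Proposition \ref{ind-BiH-ass-color-RB} and Corollary \ref{ind-3-BiH-Lie-color-RB} for the BiHom-associative and $3$-BiHom-Lie parts, and then verifies the BiHom-Leibniz rule \eqref{BiHom-Leibniz-role} by expanding $\{\alpha\beta(x),\alpha\beta(y),\mu_\mathcal{R}(z,t)\}_\mathcal{R}$, collapsing the $\mathcal{R}\mu(\mathcal{R}(z),t)+\mathcal{R}\mu(z,\mathcal{R}(t))$ pairs via \eqref{RB-op-BiH-ass-color}, applying the original Leibniz rule to the resulting four brackets, and regrouping the eight $\mu$-terms into $\mu_\mathcal{R}(\{\beta(x),\beta(y),z\}_\mathcal{R},\beta(t))+\epsilon(x+y,z)\mu_\mathcal{R}(\beta(z),\{\alpha(x),\alpha(y),t\}_\mathcal{R})$ using \eqref{RB-op-3-BiH-Lie-color}, exactly as in the paper's computation.
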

\begin{proof}
By Proposition \ref{ind-BiH-ass-color-RB}, the tuple $(\mathfrak{g},\mu_\mathcal{R},\epsilon,\alpha,\beta)$ is a BiHom-associative color algebra and Corollary \ref{ind-3-BiH-Lie-color-RB} gives that $(\mathfrak{g},\{\cdot,\cdot,\cdot\}_\mathcal{R},\epsilon,\alpha,\beta)$ is a $3$-BiHom-Lie color algebra. It remains to prove that the Leibniz identity \eqref{BiHom-Leibniz-role} is satisfied.
For $x,y,z,t\in\mathcal{H}(\mathfrak g)$,
\begin{align*}
&\{\alpha\beta(x),\alpha\beta(y),\mu_\mathcal{R}(z,t)\}_\mathcal{R}= \{\alpha\beta(x),\alpha\beta(y),\mu(\mathcal{R}(z),t)+\mu(z,\mathcal{R}(t))\}_\mathcal{R}\\&= \{\alpha\beta(x),\alpha\beta(y),\mu(\mathcal{R}(z),t)\}_\mathcal{R}+\{\alpha\beta(x),\alpha\beta(y),\mu(z,\mathcal{R}(t))\}_\mathcal{R}\\&=
\{\alpha\beta(\mathcal{R}x),\alpha\beta(\mathcal{R}y),\mu(\mathcal{R}(z),t)\} +\{\alpha\beta(\mathcal{R}x),\alpha\beta(y),\mathcal{R}\mu(\mathcal{R}(z),t)\}\\
&\quad+\{\alpha\beta(x),\alpha\beta(\mathcal{R}y),\mathcal{R}\mu(\mathcal{R}(z),t)\}+
\{\alpha\beta(\mathcal{R}x),\alpha\beta(\mathcal{R}y),\mu(z,\mathcal{R}t)\}\\&\quad+\{\alpha\beta(\mathcal{R}x),\alpha\beta(y),\mathcal{R}\mu(z,\mathcal{R}t)\}+\{\alpha\beta(x),\alpha\beta(\mathcal{R}y),\mathcal{R}\mu(z,\mathcal{R}t)\}\\&= \{\alpha\beta(\mathcal{R}x),\alpha\beta(\mathcal{R}y),\mu(\mathcal{R}(z),t)\}+ \{\alpha\beta(\mathcal{R}x),\alpha\beta(\mathcal{R}y),\mu(z,\mathcal{R}t)\}\\&\quad+\{\alpha\beta(\mathcal{R}x),\alpha\beta(y),\mu(\mathcal{R}z,\mathcal{R}t)\}+\{\alpha\beta(x),\alpha\beta(\mathcal{R}y),\mu(\mathcal{R}z,\mathcal{R}t)\}\\&=\mu(\{\beta(\mathcal{R}x),\beta(\mathcal{R}y),\mathcal{R}z\},\beta(t))+\varepsilon(z,x+y)\mu(\beta(\mathcal{R}z),\{\alpha(\mathcal{R}x),\alpha(\mathcal{R}y),t\})\\&\quad+\mu(\{\beta(\mathcal{R}x),\beta(\mathcal{R}y),z\},\beta(\mathcal{R}t))+\varepsilon(z,x+y)\mu(\beta(z),\{\alpha(\mathcal{R}x),\alpha(\mathcal{R}y),\mathcal{R}t\})\\&\quad+\mu(\{\beta(\mathcal{R}x),\beta(y),\mathcal{R}z\},\beta(\mathcal{R}t))+\varepsilon(z,x+y)\mu(\beta(\mathcal{R}z),\{\alpha(\mathcal{R}x),\alpha(y),\mathcal{R}t\})\\&\quad+\mu(\{\beta(x),\beta(\mathcal{R}y),\mathcal{R}z\},\beta(\mathcal{R}t))+\varepsilon(z,x+y)\mu(\beta(\mathcal{R}z),\{\alpha(x),\alpha(\mathcal{R}y),\mathcal{R}t\})\\&=\Big[\mu(\{\beta(\mathcal{R}x),\beta(\mathcal{R}y),\mathcal{R}z\},\beta(t))+\mu(\{\beta(\mathcal{R}x),\beta(\mathcal{R}y),z\},\beta(\mathcal{R}t))\\&\quad+\mu(\{\beta(\mathcal{R}x),\beta(y),\mathcal{R}z\},\beta(\mathcal{R}t))+\mu(\{\beta(x),\beta(\mathcal{R}y),\mathcal{R}z\},\beta(\mathcal{R}t))\Big]\\&\quad+\varepsilon(z,x+y)\Big[\mu(\beta(\mathcal{R}z),\{\alpha(\mathcal{R}x),\alpha(\mathcal{R}y),t\})+\mu(\beta(z),\{\alpha(\mathcal{R}x),\alpha(\mathcal{R}y),\mathcal{R}t\})\\&\quad+\mu(\beta(\mathcal{R}z),\{\alpha(\mathcal{R}x),\alpha(y),\mathcal{R}t\})+\mu(\beta(\mathcal{R}z),\{\alpha(x),\alpha(\mathcal{R}y),\mathcal{R}t\})\Big]\\&=\mu(\mathcal{R}\{\beta(x),\beta(y),z\}_\mathcal{R},\beta(t))+\mu(\{\beta(x),\beta(y),z\}_\mathcal{R},\mathcal{R}\beta(t))\\&\quad+\varepsilon(z,x+y)\Big(\mu(\mathcal{R}\beta(z),\{\alpha(x),\alpha(y),t\}_\mathcal{R})+\mu(\beta(z),\mathcal{R}\{\alpha(x),\alpha(y),t\}_\mathcal{R})\Big)\\&=\mu_\mathcal{R}(\{\beta(x),\beta(y),z\}_\mathcal{R},\beta(t))+\varepsilon(z,x+y)\mu_\mathcal{R}(\beta(z),\{\alpha(x),\alpha(y),t\}_\mathcal{R}).
\end{align*}
Then, the Leibniz identity \eqref{BiHom-Leibniz-role} is satisfied by $\mu_\mathcal{R}$ and $\{\cdot,\cdot,\cdot\}_\mathcal{R}$, which gives the Proof.
\end{proof}

\section{Noncommutative $3$-BiHom-pre-Poisson color algebras} 
\label{sec-noncom3bihomprepoisoncolalgs}

\begin{defi}
{\it \bf{$3$-BiHom-pre-Lie color algebras}} are defined as $5$-tuples  $(\mathfrak{g},\{\cdot,\cdot,\cdot\},\epsilon,\alpha,\beta)$, consisting of a linear super-space $\mathfrak{g}$, an even linear map
$\{\cdot,\cdot,\cdot\}:\mathfrak{g}\otimes \mathfrak{g}\otimes \mathfrak{g}\to \mathfrak{g}$ and two commuting linear maps $\alpha,\beta:\mathfrak{g}\to \mathfrak{g}$, such that the following identities hold for $x,y,z, x_i\in\mathcal{H}( \mathfrak{g}), 1\leq i\leq 5$ 
\begin{align}
&\alpha\{x,y,z\}=\{\alpha(x),\alpha(y),\alpha(z)\},\ \beta\{x,y,z\}=\{\beta(x),\beta(y),\beta(z)\},\label{3-BiHom-pre-Lie-color1}  \\
&\{\beta(x),\beta(y),\alpha(z)\}=-\epsilon(x,y)\{\beta(y),\beta(x),\alpha(z)\}, \label{3-BiHom-pre-Lie-color2}\\
\nonumber  &\{\alpha\beta(x_4),\alpha\beta(x_5),\{x_1,x_2,x_3\}\} =
\{[\beta(x_4),\beta(x_5),x_1]^C,\beta(x_2),\beta(x_3)\}   \\
&+\epsilon(x_1,x_4+x_5)\{\beta(x_1),[\beta(x_4),\beta(x_5),x_2]^C,\beta(x_3)\} \label{3-BiHom-pre-Lie-color3}\\
\nonumber &+\epsilon(x_1+x_2,x_4+x_5)\{\beta(x_1),\beta(x_2),\{\alpha(x_4),\alpha(x_5),x_3\}\},\\
\nonumber
&\{[\beta(x_4),\beta(x_5),x_1]^C,\beta(x_2),\beta(x_3)\} =
\epsilon(x_4,x_1+x_5)\{\alpha\beta(x_5),\beta(x_1),\{ \alpha(x_4),x_2, x_3\}\} \\
&+\epsilon(x_1,x_4+x_5)\{\beta(x_1),\alpha\beta(x_4),\{ \alpha(x_5),x_2,x_3\}\} \label{3-BiHom-pre-Lie-color4}\\
\nonumber &+\{\alpha\beta(x_4),\alpha\beta(x_5),\{ x_1,x_2,x_3\}\},
\end{align}
where the $3$-BiHom-$\epsilon$-commutator $[\cdot,\cdot,\cdot]^C$ is defined by
\begin{equation}\label{eq:3cc}
[x,y,z]^C=\{x,y,z\}-\epsilon(y,z)\{x,\beta\alpha^{-1}(z),\alpha\beta^{-1}(y)\}+\epsilon(x,y+z)\{y,\beta\alpha^{-1}(z),\alpha\beta^{-1}(x)\},
\end{equation}
for all  $x,y,z\in \mathcal{H}(\mathfrak{g})$.
\end{defi}
\begin{rmk}
If $\alpha=\beta=Id_\mathfrak{g}$, we recover the $3$-pre-Lie color algebra structures. If $\alpha=\beta$ and $\alpha$ is surjective, we recover the $3$-Hom-pre-Lie color algebra structures.
\end{rmk}
\begin{pdef}\label{pro:subadj}
Let $( \mathfrak{g},\{\cdot,\cdot,\cdot\},\epsilon,\alpha,\beta)$ be a $3$-BiHom-pre-Lie color algebra. Then, $( \mathfrak{g},[\cdot,\cdot,\cdot]^C,\epsilon,\alpha,\beta)$, where $[\cdot,\cdot,\cdot]^C$ is given by  \eqref{eq:3cc}
is a multiplicative $3$-BiHom-Lie color algebra called the sub-adjacent $3$-BiHom-Lie color algebra of $( \mathfrak{g},\{\cdot,\cdot,\cdot\},\epsilon,\alpha,\beta)$, and denoted by $ \mathfrak{g}^{c}$ and $( \mathfrak{g},\{\cdot,\cdot,\cdot\},\epsilon,\alpha,\beta)$ is called a compatible
$3$-BiHom-pre-Lie color algebra of the $3$-BiHom-Lie color algebra $ \mathfrak{g}^{c}$.
\end{pdef}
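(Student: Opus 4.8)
The plan is to check, one after another, the three defining requirements for $(\mathfrak{g},[\cdot,\cdot,\cdot]^C,\epsilon,\alpha,\beta)$ to be a multiplicative $3$-BiHom-Lie color algebra in the sense of Definition~\ref{def-3-BiHom-color-alg}: multiplicativity of $[\cdot,\cdot,\cdot]^C$ with respect to $\alpha$ and $\beta$, the $\epsilon$-BiHom-skew-symmetry \eqref{Bihom-epsilon-symm1}--\eqref{Bihom-epsilon-symm2}, and the $\epsilon$-$3$-BiHom-Jacobi identity \eqref{epsilon-BiHom-Nambu}. Throughout one uses that $\alpha$ and $\beta$ commute, so that $\alpha^{\pm1}$ and $\beta^{\pm1}$ all commute, and that both are morphisms of $\{\cdot,\cdot,\cdot\}$ by \eqref{3-BiHom-pre-Lie-color1}.

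First, \textbf{multiplicativity} is immediate: applying $\alpha$ (resp. $\beta$) to \eqref{eq:3cc}, pushing it through each term by \eqref{3-BiHom-pre-Lie-color1} and commuting it past $\beta\alpha^{-1}$ and $\alpha\beta^{-1}$, gives $\alpha[x,y,z]^C=[\alpha(x),\alpha(y),\alpha(z)]^C$ and likewise for $\beta$. For the \textbf{$\epsilon$-BiHom-skew-symmetry}, substituting $(x,y,z)\mapsto(\beta(x),\beta(y),\alpha(z))$ in \eqref{eq:3cc} and simplifying $\beta\alpha^{-1}\alpha=\beta$, $\alpha\beta^{-1}\beta=\alpha$ yields the totally skew form
\[
[\beta(x),\beta(y),\alpha(z)]^C=\{\beta(x),\beta(y),\alpha(z)\}-\epsilon(y,z)\{\beta(x),\beta(z),\alpha(y)\}+\epsilon(x,y+z)\{\beta(y),\beta(z),\alpha(x)\}.
\]
Interchanging the first two arguments, applying \eqref{3-BiHom-pre-Lie-color2} to each of the three summands, and regrouping with the bicharacter relations \eqref{cond-bic1}--\eqref{cond-bic3} gives \eqref{Bihom-epsilon-symm1}; the same computation with the last two arguments gives \eqref{Bihom-epsilon-symm2}. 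These are short bookkeeping steps with no real difficulty.

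The core is the \textbf{$\epsilon$-$3$-BiHom-Jacobi identity}, that is, the assertion that a $3$-BiHom-pre-Lie color algebra is $3$-BiHom-Lie-admissible. I would expand the nested bracket $[\beta^2(x_1),\beta^2(x_2),[\beta(x_3),\beta(x_4),\alpha(x_5)]^C]^C$ on the left-hand side of \eqref{epsilon-BiHom-Nambu} by applying \eqref{eq:3cc} first to the inner and then to the outer bracket, using multiplicativity and \eqref{3-BiHom-pre-Lie-color1} to normalise all twistings into the shapes $\alpha\beta(\cdot)$, $\beta(\cdot)$ occurring in \eqref{3-BiHom-pre-Lie-color3}--\eqref{3-BiHom-pre-Lie-color4}; this displays the left-hand side as a linear combination, with explicit bicharacter coefficients, of terms of the form $\{\alpha\beta(-),\alpha\beta(-),\{-,-,-\}\}$. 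To each such term I would apply \eqref{3-BiHom-pre-Lie-color3}, which rewrites it as a term with $[\cdot,\cdot,\cdot]^C$ in the first slot plus a term with $[\cdot,\cdot,\cdot]^C$ in the second slot plus a permuted nested term, and then use \eqref{3-BiHom-pre-Lie-color4} to re-express the first-slot terms. After these two substitutions the collected terms should reorganise --- using the skew-symmetry just established to align signs and \eqref{cond-bic1}--\eqref{cond-bic3} to align coefficients --- into exactly the three summands on the right-hand side of \eqref{epsilon-BiHom-Nambu}.

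The \textbf{main obstacle} is the combinatorial explosion: each use of \eqref{eq:3cc} triples the number of terms, and every transposition of homogeneous elements produces a bicharacter factor that must be carried along precisely. To keep this under control I would first record, as an auxiliary lemma, the antisymmetrisation of \eqref{3-BiHom-pre-Lie-color3} over $(x_1,x_2,x_3)$ --- which already encodes that $[x_4,x_5,-]^C$ acts on $\{\cdot,\cdot,\cdot\}$ by a twisted derivation-type rule --- together with the companion consequence of \eqref{3-BiHom-pre-Lie-color4}; substituting these into the expansion turns the verification of \eqref{epsilon-BiHom-Nambu} into a finite, essentially mechanical matching of coefficients that can be carried out term by term.
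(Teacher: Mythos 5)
Your plan --- multiplicativity from \eqref{3-BiHom-pre-Lie-color1}, the $\epsilon$-BiHom-skew-symmetry by direct expansion of \eqref{eq:3cc}, and the $\epsilon$-$3$-BiHom-Jacobi identity by expanding the nested brackets and invoking \eqref{3-BiHom-pre-Lie-color2}--\eqref{3-BiHom-pre-Lie-color4} together with the bicharacter rules --- is correct and is essentially the paper's own computation, merely run from the left-hand side of \eqref{epsilon-BiHom-Nambu} to the right-hand side rather than the reverse; your explicit appeal to \eqref{3-BiHom-pre-Lie-color4} is in fact needed at the corresponding stage (the paper's text cites only \eqref{3-BiHom-pre-Lie-color2} and \eqref{3-BiHom-pre-Lie-color3}, but the terms carrying a commutator in a first or second slot cannot be converted into nested terms without it). One bookkeeping slip in your description: expanding the outer bracket of the left-hand side produces, besides the nested terms $\{\alpha\beta(\cdot),\alpha\beta(\cdot),\{\cdot,\cdot,\cdot\}\}$, two terms with $\beta\alpha^{-1}[\beta(x_3),\beta(x_4),\alpha(x_5)]^C$ in the second slot; after a swap by \eqref{3-BiHom-pre-Lie-color2} these are exactly first-slot terms of the type governed by \eqref{3-BiHom-pre-Lie-color4}, so they fit your scheme, but they must be carried along rather than omitted.
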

\begin{proof}
The multiplicativity of $[\cdot,\cdot,\cdot]^C$ is deduced directly from condition \eqref{3-BiHom-pre-Lie-color1}.
By condition \eqref{3-BiHom-pre-Lie-color1}, we obtain for $x_i\in\mathcal{H}(\mathfrak{g}),\;1\leq i\leq3$,
\begin{align*}
[\beta(x_2),\beta(x_1),\alpha(x_3)]^C&=\{\beta(x_2),\beta(x_1),\alpha(x_3)\}-\epsilon(x_1,x_3)\{\beta(x_2),\beta(x_3),\alpha(x_1)\}\\&\quad +\epsilon(x_2,x_1+x_3)\{\beta(x_1),\beta(x_3),\alpha(x_2)\}\\&=-\epsilon(x_1,x_2)\Big(\{\beta(x_1),\beta(x_2),\alpha(x_3)\}-\epsilon(x_2,x_3)\{\beta(x_1),\beta(x_3),\alpha(x_2)\}\\&\quad+\epsilon(x_1,x_2+x_3)\{\beta(x_2),\beta(x_3),\alpha(x_1)\}\Big)\\&= -\epsilon(x_1,x_2)[\beta(x_1),\beta(x_2),\alpha(x_3)]^C.
\end{align*}
Similarly,
$[\beta(x_1),\beta(x_2),\alpha(x_3)]^C= -\epsilon(x_2,x_3)[\beta(x_1),\beta(x_3),\alpha(x_2)]^C.$
Then, the conditions \eqref{Bihom-epsilon-symm1} and \eqref{Bihom-epsilon-symm2} are satisfied by $[\cdot,\cdot,\cdot]^C$.

It remains to show that \eqref{epsilon-BiHom-Nambu} is satisfied by $[\cdot,\cdot,\cdot]^C$.
For $x_i\in\mathcal{H}(\mathfrak{g}),\;1\leq i\leq3$,
\begin{align*}
M&=\epsilon(x_1+x_2+x_3,x_4+x_5)[\beta^2(x_4),\beta^2(x_5),[\beta(x_1),\beta(x_2),\alpha(x_3)]^{C}]^C\\
&\quad-\epsilon(x_1+x_2,x_3+x_5)\epsilon(x_4,x_5)[\beta^2(x_3),\beta^2(x_5),[\beta(x_1),\beta(x_2),\alpha(x_4)]^C]^C\\
&\quad+\epsilon(x_1+x_2,x_3+x_4)[\beta^{2}(x_3),\beta^{2}(x_4),[\beta(x_1),\beta(x_2),\alpha(x_5)]^{C}]^C\\
&=\epsilon(x_1+x_2+x_3,x_4+x_5)\Big(\{\beta^2(x_4),\beta^2(x_5),[\beta(x_1),\beta(x_2),\alpha(x_3)]^{C}\}\\
&\quad-\epsilon(x_1+x_2+x_3,x_5)\{\beta^{2}(x_4),\beta\alpha^{-1}[\beta(x_1),\beta(x_2),\alpha(x_3)]^C,\alpha\beta(x_5)\}\\
&\quad+\epsilon(x_4,x_1+x_2+x_3+x_5)\{\beta^{2}(x_5),\beta\alpha^{-1}[\beta(x_1),\beta(x_2),\alpha(x_3)]^C,\alpha\beta(x_4)\}\Big)\\
&\quad-\epsilon(x_1+x_2,x_3+x_5)\epsilon(x_4,x_5)\Big(\{\beta^{2}(x_3),\beta^{2}(x_5),[\beta(x_1),\beta(x_2),\alpha(x_4)]^C\}\\
&\quad-\epsilon(x_5,x_1+x_2+x_4)\{\beta^{2}(x_3),\beta^{3}\alpha^{-1}[\beta(x_1),\beta(x_2),\alpha(x_4)]^C,\alpha\beta(x_5)\}\\
&\quad+\epsilon(x_3,x_1+x_2+x_4+x_5)\{\beta^{2}(x_5),\beta\alpha^{-1}[\beta(x_1),\beta(x_2),\alpha(x_4)]^{C},\alpha\beta(x_3)\}\Big)\\
&\quad+\epsilon(x_1+x_2,x_3+x_4)\Big(\{\beta^{2}(x_3),\beta^{2}(x_4),[\beta(x_1),\beta(x_2),\alpha(x_5)]^{C}\}\\
&\quad-\epsilon(x_4,x_1+x_2+x_5)\{\beta^{2}(x_3),\alpha^{-1}\beta[\beta(x_1),\beta(x_2),\alpha(x_5)]^{C},\alpha\beta(x_4)\}\\
&\quad+\epsilon(x_3,x_1+x_2+x_4+x_5)\{\beta^{2}(x_4),\beta\alpha^{-1}[\beta(x_1),\beta(x_2),\alpha(x_5)]^{C},\alpha\beta(x_3)\}\Big)\\
&=\epsilon(x_1+x_2+x_3,x_4+x_5)\Big(\{\beta^2(x_4),\beta^2(x_5),
\{\beta(x_1),\beta(x_2),\alpha(x_3)\}\}\\
&\quad-\epsilon(x_2,x_3)(-1)^{|x_2||x_3|}\{\beta^2(x_4),\beta^2(x_5),
\{\beta(x_1),\beta(x_3),\alpha(x_2)\}\}\\
&\quad+\epsilon(x_1,x_2+x_3)
\{\beta^2(x_4),\beta^2(x_5),\{\beta(x_2),\beta(x_3),\alpha(x_1)\}\}\\
&\quad-\epsilon(x_5,x_1+x_2+x_3)\{\beta^{2}(x_4),\beta\alpha^{-1}[\beta(x_1),\beta(x_2),\alpha(x_3)]^C,\alpha\beta(x_5)\}\\
&\quad+\epsilon(x_4,x_1+x_2+x_3+x_5)\{\beta^{2}(x_5),\beta\alpha^{-1}[\beta(x_1),\beta(x_2),\alpha(x_3)]^C,\alpha\beta(x_4)\}\Big)\\
&\quad-\epsilon(x_1+x_2+x_4,x_3+x_5)\epsilon(x_4,x_5)\Big(\{\beta^{2}(x_3),\beta^{2}(x_5),[\beta(x_1),\beta(x_2),\alpha(x_4)]^C\}\\
&\quad-\epsilon(x_1+x_2+x_4,x_5)(\{\beta^{2}(x_3),\beta^{3}\alpha^{-1}
\{\beta(x_1),\beta(x_2),\alpha(x_4)\},\alpha\beta(x_5)\}\\
&\quad-\epsilon(x_2,x_4)\{\beta^{2}(x_3),\beta^{3}\alpha^{-1}
\{\beta(x_1),\beta(x_4),\alpha(x_2)\},\alpha\beta(x_5)\}\\
&\quad+\epsilon(x_1,x_2+x_4)\{\beta^{2}(x_3),\beta^{3}\alpha^{-1}
\{\beta(x_4),\beta(x_2),\alpha(x_1)\},\alpha\beta(x_5)\})\\
&\quad+\epsilon(x_3,x_1+x_2+x_4+x_5)\{\beta^{2}(x_5),\beta\alpha^{-1}[\beta(x_1),\beta(x_2),\alpha(x_4)]^{C},\alpha\beta(x_3)\}\Big)\\
&\quad+\epsilon(x_1+x_2,x_3+x_4)\Big(\{\beta^{2}(x_3),\beta^{2}(x_4),
\{\beta(x_1),\beta(x_2),\alpha(x_5)\}\}\\
&\quad-\epsilon(x_2,x_5)\{\beta^{2}(x_3),\beta^{2}(x_4),
\{\beta(x_1),\beta(x_5),\alpha(x_2)\}\}\\
&\quad+\epsilon(x_1,x_2+x_5)\{\beta^{2}(x_3),\beta^{2}(x_4),
\{\beta(x_2),\beta(x_5),\alpha(x_1)\}\}\\
&\quad-\epsilon(x_4,x_1+x_2+x_5)\{\beta^{2}(x_3),\alpha^{-1}\beta[\beta(x_1),\beta(x_2),\alpha(x_5)]^{C},\alpha\beta(x_4)\}\\
&\quad+\epsilon(x_3,x_1+x_2+x_4+x_5)\{\beta^{2}(x_4),\beta\alpha^{-1}[\beta(x_1),\beta(x_2),\alpha(x_5)]^{C},\alpha\beta(x_3)\}\Big).
\end{align*}
Applying \eqref{3-BiHom-pre-Lie-color2} and \eqref{3-BiHom-pre-Lie-color3}, we obtain 
\begin{align*}
&M=\{\beta^2(x_1),\beta^2(x_2),\{\beta(x_3),\beta(x_4),\alpha(x_5)\}\}-\epsilon(x_4,x_5)\{\beta^2(x_1),\beta^2(x_2),\{\beta(x_3),\beta(x_5),\alpha(x_4)\}\}\\&+\epsilon(x_3,x_4+x_5)\{\beta^2(x_1),\beta^2(x_2),\{\beta(x_4),\beta(x_5),\alpha(x_3)\}\}\\
&-\epsilon(x_2,x_3+x_4+x_5)\{\beta^{2}(x_1),\beta\alpha^{-1}[\beta(x_3),\beta(x_4),\alpha(x_5)]^C,\alpha\beta(x_2)\}\\
&+\epsilon(x_1,x_2+x_3+x_4+x_5)\{\beta^{2}(x_2),\beta\alpha^{-1}[\beta(x_3),\beta(x_4),\alpha(x_5)]^C,\alpha\beta(x_1)\}\\&
=\{\beta^2(x_1),\beta^2(x_2),[\beta(x_3),\beta(x_4),\alpha(x_5)]^{C}\}\\
&-\epsilon(x_2,x_3+x_4+x_5)\{\beta^{2}(x_1),\beta\alpha^{-1}[\beta(x_3),\beta(x_4),\alpha(x_5)]^C,\alpha\beta(x_2)\}\\
&+\epsilon(x_1,x_2+x_3+x_4+x_5)\{\beta^{2}(x_2),\beta\alpha^{-1}[\beta(x_3),\beta(x_4),\alpha(x_5)]^C,\alpha\beta(x_1)\}\\&
=[\beta^2(x_1),\beta^2(x_2),[\beta(x_3),\beta(x_4),\alpha(x_5)]^{C}]^{C}.
\end{align*}
Then  $(\mathfrak{g},[\cdot,\cdot,\cdot]^C,\epsilon,\alpha,\beta)$
is a $3$-BiHom-Lie color algebra.
\end{proof}
\begin{defi}
Let $( \mathfrak{g}_1,\{\cdot,\cdot,\cdot\}_1,\epsilon,\alpha_1,\beta_1)$ and $( \mathfrak{g}_2,\{\cdot,\cdot,\cdot\}_2,\epsilon,\alpha_2,\beta_2)$ be two $3$-BiHom-Lie color algebras.
\begin{enumerate}[label=\upshape{\arabic*.}, ref=\upshape{\arabic*}, labelindent=5pt, leftmargin=*]
\item An even linear map $f:\mathfrak{g}_1\to\mathfrak{g}_2$ is called a weak homomorphism of $3$-BiHom-Lie color algebras if, for all $x,y,z\in\mathcal{H}(\mathfrak{g}_1)$,
$$f(\{x,y,z\}_1)=\{f(x),f(y),f(z)\}_2.$$
\item A weak homomorphism $f:\mathfrak{g}_1\to\mathfrak{g}_2$ is called a homomorphism of $3$-BiHom-Lie color algebras if $f$ also satisfies
$\alpha_2\circ f=f\circ\alpha_1\;\;\;\text{and}\;\;\;\beta_2\circ f=f\circ\beta_1.$
\end{enumerate}
\end{defi}
\begin{thm}\label{3-BiH-pre-Lie-twis-thm}
Let $(\mathfrak{g},\{\cdot,\cdot,\cdot\},\epsilon)$ be a $3$-pre-Lie color algebra and $\alpha,\beta$ two commuting algebra automorphisms of $\mathfrak g$. Then, the $5$-tuple $(\mathfrak{g},\{\cdot,\cdot,\cdot\}_{\alpha,\beta},\epsilon,\alpha,\beta)$ is a $3$-BiHom-pre-Lie color algebra, where the even trilinear map $\{\cdot,\cdot,\cdot\}_{\alpha,\beta}$ is defined for all $x,y,z\in\mathcal{H}(\mathfrak{g})$ by
\begin{equation}\label{3-BiH-pre-Lie-twis-cro}
\{x,y,z\}_{\alpha,\beta}=\{\alpha(x),\alpha(y),\beta(z)\}.
\end{equation}
\end{thm}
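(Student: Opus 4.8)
The plan is to follow the Yau-twist pattern already used for Proposition~\ref{pro-twist-assoc-color} and Theorem~\ref{thm-twist-assoc-color}: verify the four defining identities \eqref{3-BiHom-pre-Lie-color1}--\eqref{3-BiHom-pre-Lie-color4} of a $3$-BiHom-pre-Lie color algebra one at a time, pushing the commuting automorphisms $\alpha,\beta$ through the brackets so that each identity collapses onto its $\alpha=\beta=\mathrm{Id}$ instance, which is a defining identity of the underlying $3$-pre-Lie color algebra. The facts I rely on are: $\alpha,\beta$ are bijective (so $\alpha^{-1},\beta^{-1}$ exist and \eqref{eq:3cc} makes sense), $\alpha\beta=\beta\alpha$, each of $\alpha,\beta$ is even (hence $\epsilon(\alpha(x),\alpha(y))=\epsilon(\beta(x),\beta(y))=\epsilon(x,y)$), and each is an algebra automorphism of $\{\cdot,\cdot,\cdot\}$.

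First I would dispatch the easy identities. For \eqref{3-BiHom-pre-Lie-color1}, $\alpha\{x,y,z\}_{\alpha,\beta}=\alpha\{\alpha(x),\alpha(y),\beta(z)\}=\{\alpha^2(x),\alpha^2(y),\alpha\beta(z)\}=\{\alpha(x),\alpha(y),\alpha(z)\}_{\alpha,\beta}$ using $\alpha\beta=\beta\alpha$, and symmetrically for $\beta$; commutativity of $\alpha$ and $\beta$ also gives that they commute as operators. For \eqref{3-BiHom-pre-Lie-color2}, $\{\beta(x),\beta(y),\alpha(z)\}_{\alpha,\beta}=\{\alpha\beta(x),\alpha\beta(y),\alpha\beta(z)\}$, which by the $\epsilon$-skew-symmetry of $\{\cdot,\cdot,\cdot\}$ in its first two slots (the $\alpha=\beta=\mathrm{Id}$ case of \eqref{3-BiHom-pre-Lie-color2}) and the evenness of $\alpha,\beta$ equals $-\epsilon(x,y)\{\alpha\beta(y),\alpha\beta(x),\alpha\beta(z)\}=-\epsilon(x,y)\{\beta(y),\beta(x),\alpha(z)\}_{\alpha,\beta}$.

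Next I would compute, once and for all, the $3$-BiHom-$\epsilon$-commutator attached to $\{\cdot,\cdot,\cdot\}_{\alpha,\beta}$ via \eqref{eq:3cc}. Using $\alpha\beta\alpha^{-1}=\beta$, $\beta\alpha\beta^{-1}=\alpha$ and evenness, a short substitution gives
\[
[x,y,z]^{C}_{\alpha,\beta}=\{\alpha(x),\alpha(y),\beta(z)\}-\epsilon(y,z)\{\alpha(x),\beta(z),\alpha(y)\}+\epsilon(x,y+z)\{\alpha(y),\beta(z),\alpha(x)\}=[\alpha(x),\alpha(y),\beta(z)]^{C},
\]
where on the right $[\cdot,\cdot,\cdot]^{C}$ denotes the ordinary $\epsilon$-commutator of the $3$-pre-Lie color algebra (the $\alpha=\beta=\mathrm{Id}$ case of \eqref{eq:3cc}). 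Since $[\cdot,\cdot,\cdot]^{C}$ is an $\epsilon$-linear combination of $\{\cdot,\cdot,\cdot\}$ and $\alpha,\beta$ are even automorphisms, $\alpha$ and $\beta$ are also algebra morphisms for $[\cdot,\cdot,\cdot]^{C}$, i.e. $\alpha[a,b,c]^{C}=[\alpha(a),\alpha(b),\alpha(c)]^{C}$ and likewise for $\beta$. Now for \eqref{3-BiHom-pre-Lie-color3}: unwinding the left-hand side and applying $\beta$ to $\{x_1,x_2,x_3\}_{\alpha,\beta}$ gives $\{\alpha^2\beta(x_4),\alpha^2\beta(x_5),\{\alpha\beta(x_1),\alpha\beta(x_2),\beta^2(x_3)\}\}$, while the three terms of the right-hand side, using the commutator formula above together with the fact that $\alpha$ is an automorphism of both brackets, become $\{[\alpha^2\beta(x_4),\alpha^2\beta(x_5),\alpha\beta(x_1)]^{C},\alpha\beta(x_2),\beta^2(x_3)\}$, then $\epsilon(x_1,x_4+x_5)\{\alpha\beta(x_1),[\alpha^2\beta(x_4),\alpha^2\beta(x_5),\alpha\beta(x_2)]^{C},\beta^2(x_3)\}$, then $\epsilon(x_1+x_2,x_4+x_5)\{\alpha\beta(x_1),\alpha\beta(x_2),\{\alpha^2\beta(x_4),\alpha^2\beta(x_5),\beta^2(x_3)\}\}$, the $\epsilon$-coefficients being unchanged because $\alpha,\beta$ are even. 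This is exactly the $\alpha=\beta=\mathrm{Id}$ instance of \eqref{3-BiHom-pre-Lie-color3} evaluated at the five elements $\alpha\beta(x_1),\alpha\beta(x_2),\beta^2(x_3),\alpha^2\beta(x_4),\alpha^2\beta(x_5)$, hence holds; identity \eqref{3-BiHom-pre-Lie-color4} follows from its $\alpha=\beta=\mathrm{Id}$ instance by the same reduction.

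The only genuine difficulty is the bookkeeping of the exponents of $\alpha$ and $\beta$: one must check that in every term of \eqref{3-BiHom-pre-Lie-color3} and \eqref{3-BiHom-pre-Lie-color4} the arguments end up carrying exactly the same powers of $\alpha,\beta$ (so that the untwisted identity applies verbatim) and that the $\epsilon$-factors are literally unchanged when each $x_i$ is replaced by $\alpha^{a}\beta^{b}(x_i)$. Both points rest only on $\alpha\beta=\beta\alpha$ and the evenness of $\alpha,\beta$; there is no new $\epsilon$-combinatorics to establish, everything being inherited from the $3$-pre-Lie color algebra axioms.
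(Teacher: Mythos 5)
Your proposal is correct and is essentially the paper's own argument: the paper likewise first establishes $[x,y,z]^{C}_{\alpha,\beta}=[\alpha(x),\alpha(y),\beta(z)]^{C}$ from \eqref{eq:3cc}, checks the $\epsilon$-skew-symmetry \eqref{3-BiHom-pre-Lie-color2} directly, and then verifies \eqref{3-BiHom-pre-Lie-color3}--\eqref{3-BiHom-pre-Lie-color4} by expanding the twisted brackets and applying the untwisted $3$-pre-Lie color identities at the shifted arguments $\alpha\beta(x_1),\alpha\beta(x_2),\beta^2(x_3),\alpha^2\beta(x_4),\alpha^2\beta(x_5)$, exactly as you do (you additionally spell out the easy multiplicativity condition \eqref{3-BiHom-pre-Lie-color1}, which the paper leaves implicit).
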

\begin{proof}
Using \eqref{eq:3cc} and Theorem \ref{thm-twist-assoc-color}, we deduce that the even trilinear map $[\cdot,\cdot,\cdot]^C_{\alpha,\beta}$ define a $3$-BiHom-Lie color algebra structure on $\mathfrak{g}$, where, for all $x,y,z\in\mathcal{H}(\mathfrak{g})$,
\begin{align*} [x,y,z]^C_{\alpha,\beta}&=\{x,y,z\}_{\alpha,\beta}
-\epsilon(y,z)\{x,\beta\alpha^{-1}(z),\alpha\beta^{-1}(y)\}_{\alpha,\beta}\\&+\epsilon(x,y+z)\{y,\beta\alpha^{-1}(z),\alpha\beta^{-1}(x)\}_{\alpha,\beta}\\&=[\alpha(x),\alpha(y),\beta(z)]^C.
\end{align*}

For $x_i\in\mathcal{H}(\mathfrak{g}),\;1\leq i\leq3$,
\begin{align*}
\{\beta(x_1),\beta(x_2),\alpha(x_3)\}_{\alpha,\beta}&=\{\alpha\beta(x_1),\alpha\beta(x_2),\alpha\beta(x_3)\}\\&=-\epsilon(x_1,x_2)\{\alpha\beta(x_2),\alpha\beta(x_1),\alpha\beta(x_3)\}\\&=-\epsilon(x_1,x_2)\{\beta(x_2),\beta(x_1),\alpha(x_3)\}_{\alpha,\beta}.
\end{align*}
Now, we need to show that $\{\cdot,\cdot,\cdot\}_{\alpha,\beta}$ satisfies conditions \eqref{3-BiHom-pre-Lie-color3} and \eqref{3-BiHom-pre-Lie-color4}. Let $x_i\in\mathcal{H}(\mathfrak{g}),\;1\leq i\leq5$. Then, using the fact that $(\mathfrak g,\{\cdot,\cdot,\cdot\},\varepsilon)$ is a $3$-pre-Lie color algebra, we obtain
\begin{align*}
&\{\alpha\beta(x_4),\alpha\beta(x_5),\{x_1,x_2,x_3\}_{\alpha,\beta}\}_{\alpha,\beta}=\{\alpha^{2}\beta(x_4),\alpha^{2}\beta(x_5),\{\alpha\beta(x_1),\alpha\beta(x_2),\beta^{2}(x_3)\}\}\\&=\{[\alpha^{2}\beta(x_4),\alpha^{2}\beta(x_5),\alpha\beta(x_1)]^C,\alpha\beta(x_2),\beta^{2}(x_3)\}\\&\quad +\epsilon(x_1,x_4+x_5)\{\alpha\beta(x_1),[\alpha^{2}\beta(x_4),\alpha^{2}\beta(x_5),\alpha\beta(x_2)]^C,\beta^{2}(x_3)\}\\&\quad+\epsilon(x_1+x_2,x_4+x_5)\{\alpha\beta(x_1),\alpha\beta(x_2),\{\alpha^{2}\beta(x_4),\alpha^{2}\beta(x_5),\beta^{2}(x_3)\}\}\\
&=\{[\beta(x_4),\beta(x_5),x_1]_{\alpha,\beta}^C,\beta(x_2),\beta(x_3)\}_{\alpha,\beta}\\
&\quad+\epsilon(x_1,x_4+x_5)\{\beta(x_1),[\beta(x_4),\beta(x_5),x_2]_{\alpha,\beta}^C,\beta(x_3)\}_{\alpha,\beta}\\
&\quad+\epsilon(x_1+x_2,x_4+x_5)\{\beta(x_1),\beta(x_2),\{\alpha(x_4),\alpha(x_5),x_3\}_{\alpha,\beta}\}_{\alpha,\beta},
\\
&\{[\beta(x_4),\beta(x_5),x_1]_{\alpha,\beta}^C,\beta(x_2),\beta(x_3)\}_{\alpha,\beta} =
\{\alpha[\beta(x_4),\beta(x_5),x_1]_{\alpha,\beta}^C,\alpha\beta(x_2),\beta^2(x_3)\}\\
&=\{[\alpha^2\beta(x_4),\alpha^2\beta(x_5),\alpha\beta(x_1)]^C,
\alpha\beta(x_2),\beta^2(x_3)\}\\&=\epsilon(x_4,x_1+x_5)
\{\alpha^2\beta(x_5),\alpha\beta(x_1),\{ \alpha^2\beta(x_4),\alpha\beta(x_2), \beta^2(x_3)\}\} \\
&\quad+\epsilon(x_1,x_4+x_5)\{\alpha\beta(x_1),\alpha^2\beta(x_4),\{ \alpha^2\beta(x_5),\alpha\beta(x_2),\beta^2(x_3)\}\}\\
&\quad+\{\alpha^2\beta(x_4),\alpha^2\beta(x_5),
\{ \alpha\beta(x_1),\alpha\beta(x_2),\beta^2(x_3)\}\}\\
&=\epsilon(x_4,x_1+x_5)\{\alpha\beta(x_5),\beta(x_1),\{ \alpha(x_4),x_2, x_3\}_{\alpha,\beta}\}_{\alpha,\beta} \\
&\quad+\epsilon(x_1,x_4+x_5)\{\beta(x_1),\alpha\beta(x_4),\{ \alpha(x_5),x_2,x_3\}_{\alpha,\beta}\}_{\alpha,\beta} \\
&\quad+\{\alpha\beta(x_4),\alpha\beta(x_5),\{ x_1,x_2,x_3\}_{\alpha,\beta}\}_{\alpha,\beta},
\end{align*}
which implies that the conditions \eqref{3-BiHom-pre-Lie-color3} and \eqref{3-BiHom-pre-Lie-color4} are satisfied. Then $(\mathfrak{g},\{\cdot,\cdot,\cdot\}_{\alpha,\beta},\epsilon,\alpha,\beta)$ is a $3$-BiHom-pre-Lie color algebra.
\end{proof}
\begin{thm}\label{3-BiH-pre-Lie-idu-Kup}
Let $(\mathfrak{g},[\cdot,\cdot,\cdot],\varepsilon,\alpha,\beta)$ be a $3$-BiHom-Lie color algebra and $(V,\rho,\alpha_V,\beta_V)$ be a representation on $\mathfrak g$. Suppose that, the even linear map $T:V\to\mathfrak{g}$ is a Kupershmidt operator associated with $(V,\rho,\alpha_V,\beta_V)$. Then, there exists a $3$-BiHom-pre-Lie color algebra structure on $V$ given by
$$\{u,v,w\}_V=\rho(T(u),T(v))w,\;\forall u,v,w\in\mathcal{H}(V).$$
\end{thm}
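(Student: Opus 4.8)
The plan is to verify directly that $(V,\{\cdot,\cdot,\cdot\}_V,\epsilon,\alpha_V,\beta_V)$ satisfies the four defining identities \eqref{3-BiHom-pre-Lie-color1}--\eqref{3-BiHom-pre-Lie-color4} of a $3$-BiHom-pre-Lie color algebra. A Kupershmidt operator $T$ on a $3$-BiHom-Lie color algebra comes with $T\circ\alpha_V=\alpha\circ T$ and $T\circ\beta_V=\beta\circ T$ (by \eqref{cond-O-op-3-BiH-Lie-color1}) and with $\alpha_V,\beta_V$ bijective, and $T$, $\alpha$, $\beta$ are even so that $|T(u)|=|u|$; hence $\{\cdot,\cdot,\cdot\}_V$ is well defined and all commutation factors occurring below reduce to $\epsilon$ evaluated on the original degrees.

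First I would dispatch the two easy axioms. For multiplicativity \eqref{3-BiHom-pre-Lie-color1}, I would move $\alpha_V$ (respectively $\beta_V$) inside $\rho$ using \eqref{cond-rep-3-BiHom-Lie1} (respectively \eqref{cond-rep-3-BiHom-Lie2}) and then pull it through $T$ by $\alpha\circ T=T\circ\alpha_V$, getting $\alpha_V\{u,v,w\}_V=\rho(T\alpha_V(u),T\alpha_V(v))\alpha_V(w)=\{\alpha_V(u),\alpha_V(v),\alpha_V(w)\}_V$. The $\epsilon$-skew-symmetry \eqref{3-BiHom-pre-Lie-color2} is immediate from the $\epsilon$-skew-symmetry of $\rho$ together with $|T(\beta_V(u))|=|u|$.

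The crucial observation, which I would isolate as a lemma, is that the $3$-BiHom-$\epsilon$-commutator $[\cdot,\cdot,\cdot]^C_V$ attached to $\{\cdot,\cdot,\cdot\}_V$ via \eqref{eq:3cc} coincides, because $\alpha_V$ and $\beta_V$ commute, with the bracket $[\cdot,\cdot,\cdot]_T$ of \eqref{3-BiH-Lie-color-kup}; consequently the Kupershmidt identity \eqref{cond-O-op-3-BiH-Lie-color2} rewrites as $T\big([u,v,w]^C_V\big)=[T(u),T(v),T(w)]$ for all $u,v,w\in\mathcal{H}(V)$, that is, $T$ intertwines $[\cdot,\cdot,\cdot]^C_V$ with the bracket of $\mathfrak{g}$. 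With this lemma the two remaining identities become substitutions. For \eqref{3-BiHom-pre-Lie-color3} I would take the representation relation \eqref{cond-rep-3-BiHom-Lie3}, specialise its four $\mathfrak{g}$-arguments to $T(x_4),T(x_5),T(x_1),T(x_2)$, evaluate both sides at $x_3\in\mathcal{H}(V)$, and re-read each term: $\rho(\alpha\beta T(x_i),\alpha\beta T(x_j))$ becomes $\rho(T\alpha_V\beta_V(x_i),T\alpha_V\beta_V(x_j))$; a bracket such as $[\beta T(x_4),\beta T(x_5),T(x_1)]$ becomes $T\big([\beta_V(x_4),\beta_V(x_5),x_1]^C_V\big)$ by the lemma; and composing the $\rho$'s yields iterated $\{\cdot,\cdot,\cdot\}_V$. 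The three terms that appear are exactly the three terms of \eqref{3-BiHom-pre-Lie-color3}. Identity \eqref{3-BiHom-pre-Lie-color4} is obtained in the same way from \eqref{cond-rep-3-BiHom-Lie4}, again applying the lemma to the bracket $[\beta T(x_4),\beta T(x_5),T(x_1)]$ on the left and $\alpha T=T\alpha_V$, $\beta T=T\beta_V$ on the right.

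The main obstacle is not conceptual but bookkeeping. One must check that every commutation factor $\epsilon(\cdot,\cdot)$ produced when specialising \eqref{cond-rep-3-BiHom-Lie3} and \eqref{cond-rep-3-BiHom-Lie4} matches the corresponding factor in \eqref{3-BiHom-pre-Lie-color3} and \eqref{3-BiHom-pre-Lie-color4}; this works precisely because $T$, $\alpha$, $\beta$ are degree-preserving. One must also keep straight the interplay $\alpha_V^{-1}\beta_V=\beta_V\alpha_V^{-1}$ when comparing the $\beta\alpha^{-1}$ and $\alpha\beta^{-1}$ appearing in \eqref{eq:3cc} with the $\alpha_V^{-1}\beta_V$ appearing in \eqref{3-BiH-Lie-color-kup}, in order to establish the lemma. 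Once these are in place, the argument closes and $(V,\{\cdot,\cdot,\cdot\}_V,\epsilon,\alpha_V,\beta_V)$ is a $3$-BiHom-pre-Lie color algebra.
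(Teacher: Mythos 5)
Your proposal is correct and follows essentially the same route as the paper: verify \eqref{3-BiHom-pre-Lie-color1} and \eqref{3-BiHom-pre-Lie-color2} from the intertwining relations \eqref{cond-O-op-3-BiH-Lie-color1} and the $\epsilon$-skew-symmetry of $\rho$, observe that the Kupershmidt identity says $T\big([u,v,w]^C_V\big)=[T(u),T(v),T(w)]$ (the paper uses exactly this, your identification of $[\cdot,\cdot,\cdot]^C_V$ with $[\cdot,\cdot,\cdot]_T$ via $\alpha_V\beta_V^{-1}=\beta_V^{-1}\alpha_V$ being implicit there), and then obtain \eqref{3-BiHom-pre-Lie-color3} and \eqref{3-BiHom-pre-Lie-color4} by substituting $T(u),T(v),T(w),T(s)$ into the representation conditions \eqref{cond-rep-3-BiHom-Lie3} and \eqref{cond-rep-3-BiHom-Lie4}. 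No gap; the bookkeeping of commutation factors works out exactly as you describe because $T$, $\alpha$, $\beta$ are even.
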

\begin{proof}
For $u,v,w\in\mathcal{H}(V)$,
\begin{align*}
\alpha_V\{u,v,w\}_V&=\alpha_V\circ\rho(T(u),T(v))w=\rho(\alpha(T(u)),\alpha(T(v))\alpha_V(w)\\
& =\rho(T(\alpha_V(u)),T(\alpha_V(v)))\alpha_V(w)=\{\alpha_V(u),\alpha_V(v),\alpha_V(w)\}_V.
\end{align*}
Similarly, one can show that $\beta_V\{u,v,w\}_V=\{\beta_V(u),\beta_V(v),\beta_V(w)\}_V$,
and condition \eqref{3-BiHom-pre-Lie-color1} is therefore satisfied. Since 
\begin{align*}
\{\beta_V(u),\beta_V(v),\alpha(w)\}_V&=\rho(T(\beta_V(u)),T(\beta_V(v)))\alpha(w)\\&=-\varepsilon(u,v) \rho(T(\beta_V(v)),T(\beta_V(u)))\alpha(w)\\&=-\varepsilon(u,v)\{\beta_V(v),\beta_V(u),\alpha(w)\}_V,
\end{align*}
$\{\cdot,\cdot,\cdot\}_V$ satisfies condition \eqref{3-BiHom-pre-Lie-color2}.\\

Because $T$ is a Kupershmidt operator associated with $(V,\rho,\alpha_V,\beta_V)$, we have
\begin{align*}
T[u,v,w]_V^C&=T\Big(\rho(T(u),T(v))w-\varepsilon(v,w)\rho(T(u),T(\alpha_V^{-1}\beta_V(u)))\alpha_V\beta^{-1}_V(v)\\
&\quad\quad+\varepsilon(u,v+w)\rho(T(v),T((\alpha_V^{-1}\beta_V(w)))\alpha_V\beta_V^{-1}(u)\Big)\\&=[T(u),T(v),T(w)].
\end{align*}
For any $u,v,w,t,s\in\mathcal{H}(V)$, using\eqref{cond-O-op-3-BiH-Lie-color1}, we obtain 
\begin{align*}
\{\alpha_V\beta_V(u),\alpha_V\beta_V(v),\{w,s,t\}_v\}_V&=\rho\Big(T(\alpha_V\beta_V(u)),T(\alpha_V\beta_V(v))\Big)\rho\Big(T(w),T(s)\Big)(t)\\
\{[\beta_V(u),\beta_V(v),w]^C_V,\beta_V(s),\beta_V(t)\}_V&=\rho\Big(T([\beta_V(u),\beta_V(v),w]^C_V),T(\beta_V(s))\Big)\beta_V(t)\\&=\rho\Big([T(\beta_V(u)),T(\beta_V(v)),T(w)],T(\beta_V(s))\Big)\beta_V(t)\\&=\rho\Big([\beta(T(u)),\beta(T(v)),T(w)],\beta(T(s))\Big)\beta_V(t)\\ \{\beta_V(w),[\beta_V(u),\beta_V(v),s]^C_V,\beta_V(t)\}_V&=\rho\Big(T(\beta_V(w)),T([\beta_V(u),\beta_V(v),s]^C_V)\Big)\beta_V(t)\\&=\rho\Big(\beta(T(w)),[\beta(T(u)),\beta(T(v)),T(s)]\Big)\beta_V(t)\\
\{\beta_V(w),\beta_V(s),\{\alpha_V(u),\alpha_V(v),t\}_V\}_V&=\rho\Big(T(\beta_V(w)),T(\beta_V(s))\Big)\rho\Big(T(\alpha_V(u)),T(\alpha_V(v))\Big)(t)\\&=\rho\Big(\beta(T(w)),\beta(T(s))\Big)\rho\Big(\alpha(T(u)),\alpha(T(v))\Big)(t).
\end{align*}
Using the change  $T(u)=U,\;T(v)=V,T(w)=W$ and $T(s)=S$ and applying the fact that $(V,\rho,\alpha_V,\beta_V)$ is a representation of $(\mathfrak{g},[\cdot,\cdot,\cdot],\alpha,\beta)$, we can check that \eqref{3-BiHom-pre-Lie-color3} holds.
In the same manner, one can show that the condition \eqref{3-BiHom-pre-Lie-color4} is satisfied.
\end{proof}
\begin{cor}\label{ind-RB-3-BiH-pre-Lie-color}
Let $\mathcal{R}$ be a Rota-Baxter operator of weight zero on a $3$-BiHom-Lie color algebra $( \mathfrak{g},[\cdot,\cdot,\cdot],\epsilon,\alpha,\beta)$. Then $ \mathfrak{g}_\mathcal{R}=( \mathfrak{g},\{\cdot,\cdot,\cdot\}_\mathcal{R},\epsilon,\alpha,\beta)$ is a $3$-BiHom-pre-Lie color algebra, where $\{x,y,z\}_\mathcal{R}=[\mathcal{R}(x),\mathcal{R}(y),z]$
for all $x,y,z\in\mathcal{H}( \mathfrak{g})$,
\end{cor}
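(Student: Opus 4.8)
The plan is to obtain this corollary immediately from Theorem \ref{3-BiH-pre-Lie-idu-Kup} applied to the adjoint representation. First I would recall, as noted in the Example following the definition of Kupershmidt operators on $3$-BiHom-Lie color algebras, that a Rota-Baxter operator $\mathcal{R}$ of weight zero on $(\mathfrak{g},[\cdot,\cdot,\cdot],\epsilon,\alpha,\beta)$ is exactly a Kupershmidt operator of $\mathfrak{g}$ associated with the adjoint representation $(\mathfrak{g},ad,\alpha,\beta)$; that $(\mathfrak{g},ad,\alpha,\beta)$ is a bona fide representation of the $3$-BiHom-Lie color algebra is the $(r,s)=(0,0)$ case of the Proposition establishing that $ad_{r,s}$ satisfies \eqref{cond-rep-3-BiHom-Lie1}--\eqref{cond-rep-3-BiHom-Lie4}.

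Next I would invoke Theorem \ref{3-BiH-pre-Lie-idu-Kup} with the choices $V=\mathfrak{g}$, $\rho=ad$, $\alpha_V=\alpha$, $\beta_V=\beta$ and $T=\mathcal{R}$. The theorem then yields a $3$-BiHom-pre-Lie color algebra structure on $V=\mathfrak{g}$ whose ternary operation is
\[
\{x,y,z\}_V=\rho(T(x),T(y))z=ad(\mathcal{R}(x),\mathcal{R}(y))z=[\mathcal{R}(x),\mathcal{R}(y),z],
\]
for all $x,y,z\in\mathcal{H}(\mathfrak{g})$, which is precisely the bracket $\{x,y,z\}_\mathcal{R}$ in the statement; the accompanying twisting maps are $\alpha_V=\alpha$ and $\beta_V=\beta$. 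Hence $(\mathfrak{g},\{\cdot,\cdot,\cdot\}_\mathcal{R},\epsilon,\alpha,\beta)$ is a $3$-BiHom-pre-Lie color algebra, as claimed.

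There is no substantive obstacle here: all of the work (verifying \eqref{3-BiHom-pre-Lie-color1}--\eqref{3-BiHom-pre-Lie-color4}) has already been carried out once and for all in Theorem \ref{3-BiH-pre-Lie-idu-Kup}. The only points deserving a word of care are bookkeeping ones. First, the definition of Kupershmidt operator on a $3$-BiHom-Lie color algebra, and hence Theorem \ref{3-BiH-pre-Lie-idu-Kup}, presupposes that $\alpha_V$ and $\beta_V$ are bijective; in the present application this amounts to requiring $\alpha,\beta\in GL(\mathfrak{g})$, so the corollary is to be understood in the regular setting. Second, one may wish to record the consistency check that the sub-adjacent $3$-BiHom-Lie color algebra of $(\mathfrak{g},\{\cdot,\cdot,\cdot\}_\mathcal{R},\epsilon,\alpha,\beta)$, computed via the $\epsilon$-commutator \eqref{eq:3cc} and using that $\mathcal{R}$ commutes with $\alpha,\beta$ together with the $\epsilon$-BiHom-skew-symmetry of $[\cdot,\cdot,\cdot]$, reproduces the bracket $[\cdot,\cdot,\cdot]_\mathcal{R}$ of Corollary \ref{ind-3-BiH-Lie-color-RB}; this last point is optional and not needed for the proof itself.
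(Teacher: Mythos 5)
Your proposal is correct and matches the paper's intent: the corollary is stated without a separate proof precisely because it is the specialization of Theorem \ref{3-BiH-pre-Lie-idu-Kup} to $V=\mathfrak{g}$, $\rho=ad$, $\alpha_V=\alpha$, $\beta_V=\beta$, $T=\mathcal{R}$, using the Example identifying weight-zero Rota--Baxter operators with Kupershmidt operators for the adjoint representation. Your remarks on regularity of $\alpha,\beta$ and the consistency with Corollary \ref{ind-3-BiH-Lie-color-RB} are sensible bookkeeping but do not change the argument.
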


Now, we introduce the notion of BiHom-dendriform color algebras.
\begin{defi}\label{Def-BiHom-color-dend}
A BiHom-dendriform color algebra is a quintuple $(\mathfrak{g}, \prec, \succ,\epsilon,
\alpha,\beta)$ consisting of a $\Gamma$-graded vector space $\mathfrak{g}$ with the operations $\prec, \succ: \mathfrak{g}\otimes \mathfrak{g}\rightarrow
\mathfrak{g}$ and $\alpha,\beta: \mathfrak{g}\rightarrow \mathfrak{g}$ which are even linear maps satisfying, for all $x,y,z\in\mathcal{H}(\mathfrak{g})$,
\begin{align}
\alpha\circ\beta&=\beta\circ\alpha,\nonumber\\
\alpha(x \prec y)&=\alpha(x)\prec\alpha(y),\,\;\;\;
\beta(x \prec y)=\beta(x)\prec\beta(y),\nonumber\\
\alpha(x \succ y)&=\alpha(x)\succ\alpha(y),\;\;\;
\beta(x \succ y)=\beta(x)\succ\beta(y),\nonumber\\
(x \prec y)\prec \beta(z) &= \alpha(x)\prec (y \cdot z),\label{cond-BiHom-color-dend1}\\ (x
\succ y) \prec\beta(z) &= \alpha(x) \succ (y \prec z),\label{cond-BiHom-color-dend2}\\
\alpha(x)\succ (y \succ z ) &= ( x \cdot y) \succ \beta(z),\label{cond-BiHom-color-dend3}
\end{align}
where
\begin{eqnarray}\label{associative-dendriform}
x \cdot y = x \prec y + x \succ y,
\end{eqnarray}
\end{defi}
\begin{rmk}
\begin{enumerate}[label=\upshape{\arabic*.}, ref=\upshape{\arabic*}, labelindent=5pt, leftmargin=*]
\item
If $\alpha=\beta=Id$, we recover the dendriform color algebra structure.
\item If $\alpha=\beta$, we get the Hom-dendriform color algebra structure.
\end{enumerate}
\end{rmk}
\begin{pro}\label{twist-BiH-dend-pro}
Let $(\mathfrak{g}, \prec, \succ,\epsilon)$ be dendriform color algebra and $\alpha,\beta:\mathfrak{g}\to\mathfrak{g}$ be two even commuting algebra homomorphisms, that is
\begin{align*}
& \alpha(x \prec y)=\alpha(x)\prec\alpha(y),\quad
\beta(x \prec y)=\beta(x)\prec\beta(y)\\
& \alpha(x \succ y)=\alpha(x)\succ\alpha(y),\quad
\beta(x \succ y)=\beta(x)\succ\beta(y).
\end{align*}
Define the two binary operations $\prec_{\alpha,\beta},\ \succ_{\alpha,\beta}:\mathfrak{g}\times\mathfrak{g}\to\mathfrak{g}$,
for all $x,y\in\mathcal{H}(\mathfrak{g})$, by
\begin{equation}\label{twist-struc-BiH-dend}
x\prec_{\alpha,\beta}y=\alpha(x)\prec\beta(y)\;\;\;\text{and}\;\;\;x\succ_{\alpha,\beta}y=\alpha(x)\succ\beta(y).
\end{equation}
Then $(\mathfrak{g},\prec_{\alpha,\beta},\succ_{\alpha,\beta},\epsilon,\alpha,\beta)$ is a BiHom-dendriform color algebra.
\end{pro}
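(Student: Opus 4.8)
The plan is to verify the defining axioms of a BiHom-dendriform color algebra for the quintuple $(\mathfrak{g},\prec_{\alpha,\beta},\succ_{\alpha,\beta},\epsilon,\alpha,\beta)$ one at a time, following the same pattern as the earlier twisting results (Proposition~\ref{pro-twist-assoc-color}, Theorem~\ref{thm-twist-assoc-color}, Theorem~\ref{3-BiH-pre-Lie-twis-thm}). The commutativity $\alpha\circ\beta=\beta\circ\alpha$ is part of the hypothesis. The four multiplicativity conditions for $\prec_{\alpha,\beta}$ and $\succ_{\alpha,\beta}$ are immediate: for instance, using that $\alpha$ is a homomorphism for $\prec$ and commutes with $\beta$, one has $\alpha(x\prec_{\alpha,\beta}y)=\alpha\big(\alpha(x)\prec\beta(y)\big)=\alpha^2(x)\prec\alpha\beta(y)=\alpha(x)\prec_{\alpha,\beta}\alpha(y)$, and the remaining three are checked identically.

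The computation worth isolating first is the twisted associative product: from \eqref{associative-dendriform} and \eqref{twist-struc-BiH-dend},
\begin{equation*}
x\cdot_{\alpha,\beta}y = x\prec_{\alpha,\beta}y + x\succ_{\alpha,\beta}y = \alpha(x)\prec\beta(y)+\alpha(x)\succ\beta(y)=\alpha(x)\cdot\beta(y),
\end{equation*}
so $\cdot_{\alpha,\beta}=\cdot\circ(\alpha\otimes\beta)$. With this identity in hand, each of \eqref{cond-BiHom-color-dend1}, \eqref{cond-BiHom-color-dend2}, \eqref{cond-BiHom-color-dend3} collapses onto the corresponding ordinary dendriform axiom after expanding both sides via \eqref{twist-struc-BiH-dend} and pushing the twisting maps inward by multiplicativity. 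For example, for \eqref{cond-BiHom-color-dend1},
\begin{align*}
(x\prec_{\alpha,\beta}y)\prec_{\alpha,\beta}\beta(z) &= \big(\alpha^2(x)\prec\alpha\beta(y)\big)\prec\beta^2(z) = \alpha^2(x)\prec\big(\alpha\beta(y)\cdot\beta^2(z)\big),\\
\alpha(x)\prec_{\alpha,\beta}(y\cdot_{\alpha,\beta}z) &= \alpha^2(x)\prec\beta\big(\alpha(y)\cdot\beta(z)\big)=\alpha^2(x)\prec\big(\beta\alpha(y)\cdot\beta^2(z)\big),
\end{align*}
and the two sides coincide because $\beta\alpha=\alpha\beta$; identities \eqref{cond-BiHom-color-dend2} and \eqref{cond-BiHom-color-dend3} are treated the same way, using the second and third dendriform axioms respectively, each applied with arguments $\alpha^2(x)$, $\alpha\beta(y)$, $\beta^2(z)$.

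There is essentially no genuine obstacle here beyond careful bookkeeping: the whole argument is a substitution followed by an appeal to the untwisted dendriform axioms, and no $\epsilon$-signs intervene since \eqref{cond-BiHom-color-dend1}--\eqref{cond-BiHom-color-dend3} are sign-free. The only point requiring attention is checking that the twisting maps accumulated on the two sides of each identity match up — precisely where the hypothesis $\alpha\circ\beta=\beta\circ\alpha$ enters. This establishes that $(\mathfrak{g},\prec_{\alpha,\beta},\succ_{\alpha,\beta},\epsilon,\alpha,\beta)$ is a BiHom-dendriform color algebra.
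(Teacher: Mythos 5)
Your proposal is correct and follows essentially the same route as the paper: expand both sides of \eqref{cond-BiHom-color-dend1}--\eqref{cond-BiHom-color-dend3} via \eqref{twist-struc-BiH-dend}, push $\alpha,\beta$ inward by multiplicativity, and reduce to the untwisted dendriform axioms applied to $\alpha^2(x),\alpha\beta(y),\beta^2(z)$, with $\alpha\beta=\beta\alpha$ reconciling the two sides. Your explicit isolation of $x\cdot_{\alpha,\beta}y=\alpha(x)\cdot\beta(y)$ and of the multiplicativity checks is only a minor tidying of the paper's argument, not a different method.
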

\begin{proof}
For $x,y,z\in\mathcal{H}(\mathfrak{g})$,
\begin{align*}
(x\prec_{\alpha,\beta}y)\prec_{\alpha,\beta}\beta(z)&=\alpha(\alpha(x)\prec\beta(y))\prec\beta^2(z)=\alpha^2(x)\prec\alpha\beta(y)\prec\beta^2(z)\\&=\alpha^2(x)\prec(\alpha\beta(y)\cdot\beta^2(z))\\&=\alpha(\alpha(x))\prec(\alpha\beta(y)\prec\beta^2(z)+\alpha\beta(y)\succ\beta^2(z)) \\&= \alpha(\alpha(x))\prec(\beta(y)\prec_{\alpha,\beta}\beta(z)+\beta(y)\succ_{\alpha,\beta}\beta(z))\\&= \alpha(x)\prec_{\alpha,\beta}(y\prec_{\alpha,\beta}z+y\succ_{\alpha,\beta}z)\\&=\alpha(x)\prec_{\alpha,\beta}(y\cdot_{\alpha,\beta}z),
\\
(x\succ_{\alpha,\beta} y) \prec_{\alpha,\beta}\beta(z)&= (\alpha^2(x)\succ\alpha\beta(y)) \prec\beta^2(z)= \alpha^2(x)\succ(\alpha\beta(y) \prec\beta^2(z))\\&= \alpha^2(x)\succ\beta(\alpha(y) \prec\beta(z))\\&= \alpha(x)\succ_{\alpha,\beta}(\alpha(y) \prec_{\alpha,\beta}\beta(z)),
\\
\alpha(x)\succ_{\alpha,\beta} (y \succ_{\alpha,\beta} z ) &= \alpha^2(x)\succ(\alpha\beta(y)\succ\beta^2(z))=(\alpha^2(x)\cdot\alpha\beta(y))\succ\beta^2(z))\\&= \alpha(\alpha(x)\cdot\beta(y))\succ\beta^2(z))\\&=(\alpha(x)\cdot\beta(y))\succ_{\alpha,\beta}\beta(z))\\&=(x\cdot_{\alpha,\beta} y)\succ_{\alpha,\beta}\beta(z)).
\end{align*}
Then, the conditions \eqref{cond-BiHom-color-dend1}-\eqref{cond-BiHom-color-dend3} are satisfied. The Proposition is proved.
\end{proof}
\begin{lem}\label{BiH-ass-from-BiH-den}
If $(\mathfrak{g}, \prec, \succ,\epsilon,\alpha,\beta)$ is a BiHom dendriform color algebra. Then, \\
$(\mathfrak{g},\mu:= \prec+\succ,\epsilon,\alpha,\beta)$ is a BiHom-associative color algebra.
\end{lem}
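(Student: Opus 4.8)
The plan is to check directly the two defining conditions of a BiHom-associative color algebra from Definition \ref{dfn-BiHomasscoloralg}, namely the multiplicativity \eqref{asso-multip-cond} and the $\epsilon$-BiHom-associativity \eqref{BiHom-ass-color-identity}, for the product $\mu=\prec+\succ$.

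Multiplicativity is immediate: since $\alpha$ and $\beta$ commute and are multiplicative for both $\prec$ and $\succ$ (these relations are part of the definition of a BiHom-dendriform color algebra), adding $\alpha(x\prec y)=\alpha(x)\prec\alpha(y)$ and $\alpha(x\succ y)=\alpha(x)\succ\alpha(y)$ yields $\alpha(\mu(x,y))=\mu(\alpha(x),\alpha(y))$, and likewise for $\beta$. So \eqref{asso-multip-cond} holds.

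For the associativity identity I would use the notation $x\cdot y=\mu(x,y)=x\prec y+x\succ y$ from \eqref{associative-dendriform} and expand
$$\mu(\alpha(x),\mu(y,z))=\alpha(x)\cdot(y\cdot z)=\alpha(x)\prec(y\cdot z)+\alpha(x)\succ(y\prec z)+\alpha(x)\succ(y\succ z),$$
where the last two summands come from splitting $\alpha(x)\succ(y\cdot z)$ via $y\cdot z=y\prec z+y\succ z$ and bilinearity of $\succ$. Then I would apply one dendriform axiom to each term: \eqref{cond-BiHom-color-dend1} rewrites the first as $(x\prec y)\prec\beta(z)$, \eqref{cond-BiHom-color-dend2} rewrites the second as $(x\succ y)\prec\beta(z)$, and \eqref{cond-BiHom-color-dend3} rewrites the third as $(x\cdot y)\succ\beta(z)$. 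Adding the first two gives $\bigl((x\prec y)+(x\succ y)\bigr)\prec\beta(z)=(x\cdot y)\prec\beta(z)$, so the whole sum equals $(x\cdot y)\prec\beta(z)+(x\cdot y)\succ\beta(z)=(x\cdot y)\cdot\beta(z)=\mu(\mu(x,y),\beta(z))$. Hence $ass_{\alpha,\beta}(x,y,z)=0$, which is exactly \eqref{BiHom-ass-color-identity}.

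There is essentially no obstacle here; the only thing to be careful about is matching each of the three BiHom-dendriform identities with the correct summand, and observing that the commutation factor $\epsilon$ plays no role, since the axioms \eqref{cond-BiHom-color-dend1}--\eqref{cond-BiHom-color-dend3} carry no $\epsilon$. This establishes that $(\mathfrak{g},\mu,\epsilon,\alpha,\beta)$ is a BiHom-associative color algebra.
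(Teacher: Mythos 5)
Your proof is correct and follows essentially the same route as the paper: expand $\mu=\prec+\succ$ in $ass_{\alpha,\beta}(x,y,z)$ and cancel the resulting terms in three groups using the dendriform axioms \eqref{cond-BiHom-color-dend1}--\eqref{cond-BiHom-color-dend3}; your version merely keeps $\alpha(x)\prec(y\cdot z)$ unsplit and additionally records the (trivial) multiplicativity check, which the paper omits. No gap.
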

\begin{proof}
For any $x,y,z\in\mathcal{H}(\mathfrak{g})$, by using   \eqref{cond-BiHom-color-dend1}-\eqref{cond-BiHom-color-dend3}, we obtain
\begin{align*}
ass_{\alpha,\beta}(x,y,z)&=\mu(\alpha(x),\mu(y,z))-\mu(\mu(x,y),\beta(z))\\&=\alpha(x)\prec\mu(y,z)+\alpha(x)\succ\mu(y,z)-\mu(x,y)\prec\beta(z)-\mu(x,y)\succ\beta(z)\\&= \alpha(x)\prec (y\prec z)+\alpha(x)\prec (y\succ z)+\alpha(x)\succ(y\prec z)+\alpha(x)\succ(y\succ z)\\&-(x\prec y)\prec\beta(z)-(x\succ y)\prec\beta(z)-(x\prec y)\succ\beta(z)-(x\succ y)\succ\beta(z)\\&=(\alpha(x)\prec(y\cdot z)-(x\prec y)\prec\beta(z))+(\alpha(x)\succ(y\succ z)-(x\cdot y)\succ\beta(z))\\&+(\alpha(x)\succ(y\prec z)-(x\succ z)\prec\beta(z))=0,
\end{align*}
which gives the result.
\end{proof}
\begin{thm}\label{BiH-dedr-color-ind-Kup}
If $T$ is a Kupershmidt operator on BiHom-associative color algebra $(\mathfrak{g},\mu,\alpha,\beta)$ 
with respect to the bimodule $(V,\mathfrak{l},\mathfrak{r},\alpha)$, then there exists a BiHom-dendriform color algebra structure on $V$ with even bilinear maps $\prec_V,\succ_V$ defined for all $u,v\in\mathcal{H}(V)$ by
$$u\prec_Vv=\epsilon(u,v)\mathfrak{r}\big(T(v)\big)u,\;\;\;\;u\succ_Vv=\mathfrak{l}\big(T(u)\big)v.$$
\end{thm}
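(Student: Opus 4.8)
The plan is to verify directly that the sextuple $(V,\prec_V,\succ_V,\epsilon,\alpha_V,\beta_V)$ satisfies all the axioms of Definition \ref{Def-BiHom-color-dend}. The structural fact that organizes the argument is that the associated product
$$u\cdot_V v:=u\prec_V v+u\succ_V v=\mathfrak{l}(T(u))v+\epsilon(u,v)\mathfrak{r}(T(v))u$$
satisfies, by the Kupershmidt identity \eqref{cond-O-op-BiH-ass-color2}, the relation $T(u\cdot_V v)=\mu(T(u),T(v))$; combined with $\alpha\circ T=T\circ\alpha_V$ and $\beta\circ T=T\circ\beta_V$ from \eqref{cond-O-op-BiH-ass-color1}, this is precisely what will let us transport the $\epsilon$-BiHom-associativity of $\mu$ into the dendriform identities on $V$. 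I would also record at the outset that $u\prec_V v$ and $u\succ_V v$ are homogeneous of degree $|u|+|v|$ (since $\mathfrak{l}$, $\mathfrak{r}$ and $T$ are even), which is needed to keep track of the commutation factors.

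First I would handle the ``twisting'' conditions. The commutativity $\alpha_V\circ\beta_V=\beta_V\circ\alpha_V$ is the bimodule axiom \eqref{cond-left-mod-asso-color1}. For $\succ_V$, using \eqref{cond-left-mod-asso-color2} and $\alpha\circ T=T\circ\alpha_V$,
$$\alpha_V(u\succ_V v)=\alpha_V(\mathfrak{l}(T(u))v)=\mathfrak{l}(\alpha(T(u)))\alpha_V(v)=\mathfrak{l}(T(\alpha_V(u)))\alpha_V(v)=\alpha_V(u)\succ_V\alpha_V(v),$$
and analogously for $\beta_V$ via \eqref{cond-left-mod-asso-color3}; for $\prec_V$ one uses the right-module analogues of \eqref{cond-left-mod-asso-color2}--\eqref{cond-left-mod-asso-color3} together with the evenness of $\alpha_V$ and $\beta_V$, which ensures that the coefficient $\epsilon(u,v)$ is unchanged.

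The core of the proof is the three dendriform identities \eqref{cond-BiHom-color-dend1}--\eqref{cond-BiHom-color-dend3}. For \eqref{cond-BiHom-color-dend3}, expanding both sides gives $\mathfrak{l}(\alpha(T(u)))\mathfrak{l}(T(v))w$ on the left and $\mathfrak{l}(\mu(T(u),T(v)))\beta_V(w)$ on the right, which coincide by \eqref{cond-rep-BiH-ass1}. For \eqref{cond-BiHom-color-dend2}, the left-hand side reduces to $\epsilon(u+v,w)\,\mathfrak{r}(\beta(T(w)))\mathfrak{l}(T(u))v$ and the right-hand side to $\epsilon(v,w)\,\mathfrak{l}(\alpha(T(u)))\mathfrak{r}(T(w))v$; applying \eqref{cond-rep-BiH-ass2} to the latter and expanding $\epsilon(u+v,w)=\epsilon(u,w)\epsilon(v,w)$ by \eqref{cond-bic3} makes the two sides agree. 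For \eqref{cond-BiHom-color-dend1}, the left-hand side reduces to $\epsilon(u+v,w)\epsilon(u,v)\,\mathfrak{r}(\beta(T(w)))\mathfrak{r}(T(v))u$, while the right-hand side, after using $T(v\cdot_V w)=\mu(T(v),T(w))$, becomes $\epsilon(u,v+w)\,\mathfrak{r}(\mu(T(v),T(w)))\alpha_V(u)$; now \eqref{cond-rep-BiH-ass3} rewrites this as $\epsilon(u,v+w)\epsilon(v,w)\,\mathfrak{r}(\beta(T(w)))\mathfrak{r}(T(v))u$, and the two commutation-factor prefactors coincide by \eqref{cond-bic2} and \eqref{cond-bic3}.

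I expect the only real friction to be the bookkeeping of the commutation factors: one must consistently use that $\prec_V$ and $\succ_V$ add degrees and reconcile the $\epsilon$'s produced by nesting the operations with those built into \eqref{cond-rep-BiH-ass2}--\eqref{cond-rep-BiH-ass3}, all of which is mechanical given the bicharacter identities \eqref{cond-bic1}--\eqref{cond-bic3}. (Here the bimodule in the statement is read with its two twisting maps $\alpha_V,\beta_V$, and the resulting algebra is $(V,\prec_V,\succ_V,\epsilon,\alpha_V,\beta_V)$.) Once \eqref{cond-BiHom-color-dend1}--\eqref{cond-BiHom-color-dend3} are verified, $(V,\prec_V,\succ_V,\epsilon,\alpha_V,\beta_V)$ is a BiHom-dendriform color algebra, which is the claim; as a byproduct, Lemma \ref{BiH-ass-from-BiH-den} shows that $(V,\cdot_V,\epsilon,\alpha_V,\beta_V)$ is then a BiHom-associative color algebra, namely the one induced by the Kupershmidt operator $T$.
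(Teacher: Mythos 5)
Your proposal is correct and follows essentially the same route as the paper: a direct verification of the three dendriform identities \eqref{cond-BiHom-color-dend1}--\eqref{cond-BiHom-color-dend3}, where the Kupershmidt identity \eqref{cond-O-op-BiH-ass-color2} (packaged in your observation $T(u\cdot_V v)=\mu(T(u),T(v))$) is combined with the bimodule conditions \eqref{cond-rep-BiH-ass1}--\eqref{cond-rep-BiH-ass3} and the bicharacter rules to match the commutation factors. The only difference is cosmetic: you also spell out the twisting/multiplicativity checks for $\alpha_V,\beta_V$, which the paper leaves implicit.
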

\begin{proof}
Let $u,v,w\in\mathcal{H}(V)$. By the fact that $T$ is a Kupershmidt operator on $(\mathfrak{g},\mu,\alpha,\beta)$, we have
\begin{align*}
&\alpha(u)\prec_V (v \cdot_V w)-(u \prec_V v)\prec_V \beta(w)\\
&=\epsilon(u,v+w)\epsilon(v,w)\alpha(u)\prec_V (v \prec_V w)+\epsilon(u,v+w)\alpha(u)\prec_V (v \succ_V w)\\
&\quad -\epsilon(u,v+w)(u \prec_V v)\prec_V \beta(w)\\&=\epsilon(u,v+w)\epsilon(v,w)\mathfrak{r}\Big(T(\mathfrak{r}(T(w)))v\Big)\alpha_V(u)+\mathfrak{r}\Big(T(\mathfrak{l}(T(v)))w\Big)\alpha_V(u)\\
&\quad-\epsilon(u,v+w)\mathfrak{r}(T(\beta_V(w)))\mathfrak{r}(T(v))u\\
&=\epsilon(u,v+w)\mathfrak{r}\Big(\epsilon(v,w)T(\mathfrak{r}(T(w)))v+T(\mathfrak{l}(T(v)))w\Big)\alpha_V(u)\\
&\quad-\epsilon(u,v+w)\mathfrak{r}(\mu(T(v),T(w)))\alpha_V(u)\\
&=\epsilon(u,v+w)\mathfrak{r}(\mu(T(v),T(w)))\alpha_V(u)-\epsilon(u,v+w)\mathfrak{r}(\mu(T(v),T(w)))\alpha_V(u) =0,
\\
&(u\succ_Vv)\prec_V\beta_V(w)=\varepsilon(u+v,w)\mathfrak{r}\big(T(\beta_V(w))\big)(u\succ_Vv)\\&= \varepsilon(u+v,w)\mathfrak{r}\big(T(\beta_V(w))\big)(\mathfrak{l}(T(u))v)= \varepsilon(u+v,w)\mathfrak{r}\big(\beta T((w))\big)(\mathfrak{l}(T(u))v)\\&=\varepsilon(v,w)\mathfrak{l}(\alpha(Tu))\mathfrak{r}(Tw)v=\mathfrak{l}(T\alpha_V(u))(v\prec_V w)= \alpha_V(u)\succ_V(v\prec_V w).
\end{align*}
Then, the conditions \eqref{cond-BiHom-color-dend1} and \eqref{cond-BiHom-color-dend2} are satisfied by $\prec_V$ and $\succ_V$ on $V$, and in a similar way, we show that, the condition \eqref{cond-BiHom-color-dend3} is satisfied.
\end{proof}
\begin{cor}\label{ind-RB-BiH-dend-color}
Let $\mathcal{R}$ be a Rota-Baxter operator of weight zero on a BiHom-associative color algebra $(\mathfrak{g},\mu,\epsilon,\alpha,\beta)$. Then, there exists a BiHom-dendriform color algebra structure on $\mathfrak{g}$ given by the two even bilinear maps $\prec_{\mathcal{R}}$ and $\succ_{\mathcal{R}}$ defined for all $x,y\in\mathcal{H}(\mathfrak{g})$ by
$$x\prec_{\mathcal{R}}y=\mu(\mathcal{R}(x),y),\;\;\;x\succ_{\mathcal{R}}y=\mu(x,\mathcal{R}(y)).$$
\end{cor}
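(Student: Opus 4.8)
The plan is to obtain Corollary~\ref{ind-RB-BiH-dend-color} as an immediate specialization of Theorem~\ref{BiH-dedr-color-ind-Kup}. Recall from the Remark preceding Proposition~\ref{ind-BiH-ass-color-RB} that a Rota--Baxter operator $\mathcal R$ of weight zero on $(\mathfrak g,\mu,\epsilon,\alpha,\beta)$ is, by definition, a Kupershmidt operator on $(\mathfrak g,\mu,\epsilon,\alpha,\beta)$ associated with the regular bimodule $(\mathfrak g,\mathfrak L,\mathfrak R,\alpha,\beta)$, where $\mathfrak L,\mathfrak R$ are the left and right multiplication operators of $\mu$, the latter incorporating the $\epsilon$-factor that is needed so that the Kupershmidt identity \eqref{cond-O-op-BiH-ass-color2} specializes exactly to the weight-zero Rota--Baxter identity \eqref{RB-op-BiH-ass-color}. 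The first step is therefore to confirm that $(\mathfrak g,\mathfrak L,\mathfrak R,\alpha,\beta)$ is genuinely a bimodule of the BiHom-associative color algebra: conditions \eqref{cond-left-mod-asso-color1}--\eqref{cond-left-mod-asso-color3} follow at once from $\alpha\beta=\beta\alpha$ and the multiplicativity \eqref{asso-multip-cond}, while \eqref{cond-rep-BiH-ass1}--\eqref{cond-rep-BiH-ass3} follow from $\epsilon$-BiHom-associativity \eqref{BiHom-ass-color-identity} after the bicharacter identities \eqref{cond-bic1}--\eqref{cond-bic3} are used to absorb the $\epsilon$-factor carried by $\mathfrak R$.

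Granting this, I would apply Theorem~\ref{BiH-dedr-color-ind-Kup} with $V=\mathfrak g$, $\mathfrak l=\mathfrak L$, $\mathfrak r=\mathfrak R$, $\alpha_V=\alpha$, $\beta_V=\beta$ and $T=\mathcal R$. This immediately yields a BiHom-dendriform color algebra structure on $\mathfrak g$ whose two operations are $\mathfrak l(\mathcal R(x))y=\mu(\mathcal R(x),y)$ and $\epsilon(x,y)\mathfrak r(\mathcal R(y))x$; unwinding the definition of $\mathfrak R$ and cancelling $\epsilon$-factors shows the second operation is $\mu(x,\mathcal R(y))$, and that the associated product \eqref{associative-dendriform} becomes $\mu(\mathcal R(x),y)+\mu(x,\mathcal R(y))=\mu_{\mathcal R}(x,y)$ of Proposition~\ref{ind-BiH-ass-color-RB}. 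Matching these to the slots $\prec_{\mathcal R},\succ_{\mathcal R}$ of Definition~\ref{Def-BiHom-color-dend} then gives the statement, so that \eqref{cond-BiHom-color-dend1}--\eqref{cond-BiHom-color-dend3} hold. Alternatively, and perhaps more transparently for the reader, one can bypass the general theorem and verify \eqref{cond-BiHom-color-dend1}--\eqref{cond-BiHom-color-dend3} for $\prec_{\mathcal R},\succ_{\mathcal R}$ directly: each axiom, after substituting the definitions together with $x\cdot_{\mathcal R}y=\mu_{\mathcal R}(x,y)$, collapses by $\epsilon$-BiHom-associativity \eqref{BiHom-ass-color-identity}, multiplicativity \eqref{asso-multip-cond}, and the Rota--Baxter relation \eqref{RB-op-BiH-ass-color} (equivalently, the defining identity \eqref{cond-O-op-BiH-ass-color2} with $T=\mathcal R$), mirroring the computation in the proof of Theorem~\ref{BiH-dedr-color-ind-Kup}.

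The only real obstacle here is bookkeeping rather than anything conceptual: one must keep track of the $\epsilon$-factor hidden in the right regular action $\mathfrak R$ so that the abstract product $\epsilon(x,y)\mathfrak r(\mathcal R(y))x$ genuinely collapses to the clean expression $\mu(x,\mathcal R(y))$, and one must attach the two induced operations to the correct of the two slots $\prec_{\mathcal R},\succ_{\mathcal R}$ so that axioms \eqref{cond-BiHom-color-dend1}--\eqref{cond-BiHom-color-dend3} come out in the form required by Definition~\ref{Def-BiHom-color-dend}. Beyond this, there is essentially nothing to prove: the entire content is contained in Theorem~\ref{BiH-dedr-color-ind-Kup} and in the identification of weight-zero Rota--Baxter operators with Kupershmidt operators for the regular bimodule.
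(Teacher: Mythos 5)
Your argument is exactly the paper's (implicit) proof: the corollary is stated without a separate proof precisely because it is the specialization of Theorem~\ref{BiH-dedr-color-ind-Kup} to the regular bimodule $(\mathfrak{g},\mathfrak{L},\mathfrak{R},\alpha,\beta)$, for which a weight-zero Rota--Baxter operator is by definition the associated Kupershmidt operator, and your bookkeeping of the $\epsilon$-factor hidden in $\mathfrak{R}$ is correct. The only point to make explicit is that this specialization yields $x\succ y=\mu(\mathcal{R}(x),y)$ and $x\prec y=\mu(x,\mathcal{R}(y))$, so the formulas displayed in the corollary have $\prec_{\mathcal{R}}$ and $\succ_{\mathcal{R}}$ interchanged (as printed they fail \eqref{cond-BiHom-color-dend1} and \eqref{cond-BiHom-color-dend3}); your final ``matching to the slots'' step should therefore be recorded as this correction of the printed statement rather than as literal agreement with it.
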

\begin{defi}
Non-commutative $3$-BiHom-pre-Poisson color algebras 
$(\mathfrak{g},\{\cdot,\cdot,\cdot\},\prec,\succ,\epsilon,\alpha,\beta)$ consist of
a $3$-BiHom-pre-Lie color algebra $(\mathfrak{g},\{\cdot,\cdot,\cdot\},\epsilon,\alpha,\beta)$ and a BiHom-dendriform color algebra $(\mathfrak{g},\prec,\succ,\epsilon,\alpha,\beta)$ satisfying the following compatibility conditions:
\begin{align}
& \{\alpha\beta^2(x),\alpha\beta^2(y),z\prec \alpha(t)\}-\{\beta^2(x),\beta^2(y),z\}\prec \alpha\beta(t) \nonumber\\
&=\epsilon(z,x+y)\beta(z)\prec\Big(\{\alpha\beta(x),\alpha\beta(y),\alpha(t)\}
-\epsilon(y,t)\{\alpha\beta(x),\beta(t),\alpha^2(y)\}
\label{cond-3-BiH-pre-Poiss-color1}\\
&\hspace{5cm} +\epsilon(x,y+t)\{\alpha\beta(y),\beta(t),\alpha^2(x)\}\Big),\nonumber \\
& \{\alpha\beta(x),\alpha^2\beta(y),\alpha(z)\succ t\}-\epsilon(x+y,z)\alpha\beta(z)\succ\{\alpha(x),\alpha^2(y),t\}\nonumber\\
&=\Big(\{\beta(x),\alpha\beta(y),\alpha(z)\}-\epsilon(y,z)\{\beta(x),\beta(z),\alpha^2(y)\}
\label{cond-3-BiH-pre-Poiss-color2}\\
&\hspace{3cm}
+\epsilon(x,y+z)\{\alpha\beta(y),\beta(z),\alpha(x)\}\Big)\succ\beta(t),\nonumber\\
&\{\alpha\beta(x),\beta(y\prec z),\alpha^2\beta(t)\}+\{\alpha\beta(x),\beta(y\succ z),\alpha^2\beta(t)\}\label{cond-3-BiH-pre-Poiss-color3}\\
&=\epsilon(x,y)\alpha\beta(y)\succ\{\alpha(x),\beta(z),\alpha^2(t)\}
+\epsilon(z,t)\{\beta(x),\beta(y),\alpha\beta(t)\}\prec\alpha\beta(z). \nonumber
\end{align}

\end{defi}
\begin{rmk}
If $\alpha=\beta=Id$ (resp. $\alpha=\beta$ ), we recover the noncommutative $3$-pre-Poisson (resp. $3$-Hom-pre-Poisson) color algebras structures.
\end{rmk}
\begin{thm}
Let $(\mathfrak{g},\{\cdot,\cdot,\cdot\},\prec,\succ,\epsilon)$ be a noncommutative $3$-pre-Poisson color algebra and $\alpha,\;\beta$ be two even commuting algebra automorphisms of $\mathfrak{g}$. Then, \\
$(\mathfrak{g},\{\cdot,\cdot,\cdot\}_{\alpha,\beta},\prec_{\alpha,\beta},
\succ_{\alpha,\beta},\epsilon,\alpha,\beta)$ is a $3$-BiHom-pre-Poisson color algebra, where $\{\cdot,\cdot,\cdot\}_{\alpha,\beta},\;\prec_{\alpha,\beta}$ and $\succ_{\alpha,\beta}$ are defined respectively by the   \eqref{3-BiH-pre-Lie-twis-cro} and \eqref{twist-struc-BiH-dend}.
\end{thm}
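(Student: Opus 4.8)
The plan is to treat the two constituent structures separately and then reduce the three compatibility conditions to their untwisted counterparts. First, by Theorem \ref{3-BiH-pre-Lie-twis-thm}, since $(\mathfrak{g},\{\cdot,\cdot,\cdot\},\epsilon)$ underlies a $3$-pre-Lie color algebra and $\alpha,\beta$ are commuting algebra automorphisms, the tuple $(\mathfrak{g},\{\cdot,\cdot,\cdot\}_{\alpha,\beta},\epsilon,\alpha,\beta)$ is a $3$-BiHom-pre-Lie color algebra, where $\{\cdot,\cdot,\cdot\}_{\alpha,\beta}$ is given by \eqref{3-BiH-pre-Lie-twis-cro}. Second, by Proposition \ref{twist-BiH-dend-pro}, the underlying dendriform color algebra $(\mathfrak{g},\prec,\succ,\epsilon)$ together with $\alpha,\beta$ yields a BiHom-dendriform color algebra $(\mathfrak{g},\prec_{\alpha,\beta},\succ_{\alpha,\beta},\epsilon,\alpha,\beta)$, with $\prec_{\alpha,\beta},\succ_{\alpha,\beta}$ given by \eqref{twist-struc-BiH-dend}. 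It then remains only to check that the three compatibility identities \eqref{cond-3-BiH-pre-Poiss-color1}--\eqref{cond-3-BiH-pre-Poiss-color3} hold for the twisted operations.

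For each of these three conditions I would substitute the definitions $\{a,b,c\}_{\alpha,\beta}=\{\alpha(a),\alpha(b),\beta(c)\}$, $a\prec_{\alpha,\beta}b=\alpha(a)\prec\beta(b)$ and $a\succ_{\alpha,\beta}b=\alpha(a)\succ\beta(b)$, and then repeatedly use that $\alpha$ and $\beta$ are commuting algebra morphisms to pull every occurrence of $\alpha$ and $\beta$ to the outside of each term. The arrangement of the twisting powers $\alpha\beta^2,\ \alpha^2\beta,\ \alpha^2$, etc., in \eqref{cond-3-BiH-pre-Poiss-color1}--\eqref{cond-3-BiH-pre-Poiss-color3} is designed precisely so that, after this rewriting, every term on both sides acquires the same prefactor $\alpha^{p}\beta^{q}$ (for \eqref{cond-3-BiH-pre-Poiss-color1}, the factor $\alpha^{2}\beta^{2}$ on the bracket slots carrying $x,y$, the factor $\alpha\beta^{2}$ on $t$, and $\alpha\beta$ on $z$). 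Since $\alpha$ and $\beta$ are automorphisms, this common prefactor can be cancelled, and the resulting identity is exactly the specialization of \eqref{cond-3-BiH-pre-Poiss-color1} (resp. \eqref{cond-3-BiH-pre-Poiss-color2}, \eqref{cond-3-BiH-pre-Poiss-color3}) to $\alpha=\beta=Id$, evaluated on arbitrary homogeneous elements; the arguments may be taken arbitrary because $\alpha$ and $\beta$ are surjective. These specializations are precisely the defining compatibility conditions of the noncommutative $3$-pre-Poisson color algebra $(\mathfrak{g},\{\cdot,\cdot,\cdot\},\prec,\succ,\epsilon)$, hence they hold, which finishes the proof.

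Throughout, the $\epsilon$-factors are untouched by the substitutions because $\alpha$ and $\beta$ are even, so that $\epsilon(\alpha^{p}\beta^{q}(x),\cdot)=\epsilon(x,\cdot)$; this is what guarantees that the commutation coefficients in the twisted identities match those in the untwisted ones. I expect the main obstacle to be purely clerical: carefully bookkeeping, in each of the three conditions, that the $\alpha$- and $\beta$-powers genuinely line up term by term so that a single common prefactor can be factored out. In particular one must double-check that expansions such as $\{\alpha\beta^{2}(x),\alpha\beta^{2}(y),z\prec_{\alpha,\beta}\alpha(t)\}_{\alpha,\beta}$ and $\{\beta^{2}(x),\beta^{2}(y),z\}_{\alpha,\beta}\prec_{\alpha,\beta}\alpha\beta(t)$ produce the identical triple of twisted arguments, and similarly on the right-hand sides. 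Once this alignment is verified for \eqref{cond-3-BiH-pre-Poiss-color1}, the same pattern handles \eqref{cond-3-BiH-pre-Poiss-color2} and \eqref{cond-3-BiH-pre-Poiss-color3} in the same way, and the theorem follows.
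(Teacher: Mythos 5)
Your proposal is correct and follows essentially the same route as the paper: invoke Theorem \ref{3-BiH-pre-Lie-twis-thm} and Proposition \ref{twist-BiH-dend-pro} for the two constituent structures, then verify \eqref{cond-3-BiH-pre-Poiss-color1}--\eqref{cond-3-BiH-pre-Poiss-color3} by substituting the twisted operations and reducing to the untwisted compatibility conditions (the paper carries this out as an explicit term-by-term expansion, which is exactly the prefactor alignment you describe). Your per-variable prefactors for \eqref{cond-3-BiH-pre-Poiss-color1} match the paper's computation, and the bijectivity/evenness argument for renaming arguments and preserving the $\epsilon$-factors is sound.
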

\begin{proof}
By Theorem \ref{3-BiH-pre-Lie-twis-thm} and Proposition \ref{twist-BiH-dend-pro} respectively, $(\mathfrak{g},\{\cdot,\cdot,\cdot\}_{\alpha,\beta},\epsilon,\alpha,\beta)$ is a $3$-BiHom-pre-Lie color algebra and $(\mathfrak{g},\prec_{\alpha,\beta},\succ_{\alpha,\beta},\epsilon,\alpha,\beta)$ is a BiHom-dendriform color algebra.
We must show that the compatibility conditions \eqref{cond-3-BiH-pre-Poiss-color1}-\eqref{cond-3-BiH-pre-Poiss-color3}  are satisfied.
Since $(\mathfrak{g},\{\cdot,\cdot,\cdot\},\prec,\succ,\epsilon)$ is a noncommutative $3$-pre-Poisson color algebra, for $x,y,z,t\in\mathcal{H}(\mathfrak{g})$, we have
\begin{align*}
& \{\alpha\beta^2(x),\alpha\beta^2(y),z\prec_{\alpha,\beta} \alpha(t)\}_{\alpha,\beta}-
\{\beta^2(x),\beta^2(y),z\}_{\alpha,\beta}\prec_{\alpha,\beta} \alpha\beta(t)\\&=
\{\alpha^2\beta^2(x),\alpha^2\beta^2(y),\alpha\beta(z)\prec\alpha\beta^2(t)\}-\{\alpha^2\beta^2(x),\alpha^2\beta^2(y),\alpha\beta(z)\}\prec\alpha\beta^2(t)\\&= \alpha\beta(z)\prec\Big(\{\alpha^2\beta^2(x),\alpha^2\beta^2(y),\alpha\beta^2(t)\}-\epsilon(y,t)\{\alpha^2\beta^2(x),\alpha\beta^2(t),\alpha^2\beta^2(y)\}\\
&\quad +\epsilon(x,y+t)\{\alpha^2\beta^2(y),\alpha\beta^2(t),\alpha^2\beta^2(x)\}\Big)\\&= \beta(z)\prec_{\alpha,\beta}\Big(\{\alpha^2\beta(x),\alpha^2\beta(y),\alpha\beta(t)\}-\epsilon(y,t)\{\alpha^2\beta(x),\alpha\beta(t),\alpha^2\beta(y)\}\\
&\quad +\epsilon(x,y+t)\{\alpha^2\beta(y),\alpha\beta(t),\alpha^2\beta(x)\}\Big)\\&= \beta(z)\prec_{\alpha,\beta}\Big(\{\alpha\beta(x),\alpha\beta(y),\alpha(t)\}_{\alpha,\beta}
-\epsilon(y,t)\{\alpha\beta(x),\beta(t),\alpha^2(y)\}_{\alpha,\beta}\\
&\quad
+\epsilon(x,y+t)\{\alpha\beta(y),\beta(t),\alpha(x)\}_{\alpha,\beta}\Big),
\\
&\{\alpha\beta(x),\alpha^2\beta(y),\alpha(z)\succ t\}_{\alpha,\beta}-\epsilon(x+y,z)\alpha\beta(z)\succ_{\alpha,\beta}\{\alpha(x),\alpha^2(y),t\}_{\alpha,\beta}\\
&=\{\alpha^2\beta(x),\alpha^3\beta(y),\alpha^2\beta(z)\succ \beta^2(t)\}-\epsilon(x+y,z)\alpha^2\beta(z)\succ\{\alpha^2\beta(x),\alpha^3\beta(y),\beta^2(t)\}\\
&=\Big(\{\alpha^2\beta(x),\alpha^3\beta(y),\alpha^2\beta(z)\}-\epsilon(y,z)\{\alpha^2\beta(x),\alpha^2\beta(z),\alpha^3\beta(y)\} \\
&\quad
+\epsilon(x,y+z)\{\alpha^3\beta(y),\alpha^2\beta(z),\alpha^2\beta(x)\}\Big)\succ\beta^2(t)\\&=\Big(\alpha\{\beta(x),\alpha\beta(y),\alpha(z)\}_{\alpha,\beta}-\epsilon(y,z)\alpha\{\beta(x),\beta(z),\alpha^2(y)\}_{\alpha,\beta} \\
&\quad
+\epsilon(x,y+z)\alpha
\{\alpha\beta(y),\beta(z),\alpha(x)\}_{\alpha,\beta}\Big)\succ\beta^2(t)\\
&=\Big(\{\beta(x),\alpha\beta(y),\alpha(z)\}_{\alpha,\beta}-\epsilon(y,z)\{\beta(x),\beta(z),\alpha^2(y)\}_{\alpha,\beta} \\
&\quad +\epsilon(x,y+z)
\{\alpha\beta(y),\beta(z),\alpha(x)\}_{\alpha,\beta}\Big)\succ_{\alpha,\beta}\beta(t).
\end{align*}
Then, the conditions \eqref{cond-3-BiH-pre-Poiss-color1} and \eqref{cond-3-BiH-pre-Poiss-color2} are satisfied. Similarly, the condition \eqref{cond-3-BiH-pre-Poiss-color3} is also satisfied, which ends the proof.
\end{proof}
\begin{pro}
Let $(\mathfrak{g},\{\cdot,\cdot,\cdot\},\prec,\succ,\epsilon,\alpha,\beta)$ be a noncommutative $3$-BiHom-pre-Poisson color algebra. Then, $(\mathfrak{g},[\cdot,\cdot,\cdot]^C,\mu,\epsilon,\alpha,\beta)$ is a noncommutative $3$-BiHom-Poisson color algebra, where $[\cdot,\cdot,\cdot]^C$ is defined by \eqref{eq:3cc} and $\mu$ is defined by Lemma \ref{BiH-ass-from-BiH-den}.

We call $(\mathfrak{g},[\cdot,\cdot,\cdot]^C,\mu,\epsilon,\alpha,\beta)$ the sub-adjacent noncommutative $3$-BiHom-Poisson color algebra of
$(\mathfrak{g},\{\cdot,\cdot,\cdot\},\prec,\succ,\epsilon,\alpha,\beta)$ and denote it by $\mathfrak{g}^C$, and call $(\mathfrak{g},\{\cdot,\cdot,\cdot\},\prec,\succ,\epsilon,\alpha,\beta)$ the compatible noncommutative $3$-BiHom-pre-Poisson color algebra of $\mathfrak{g}^C$.
\end{pro}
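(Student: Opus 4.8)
The plan is to check the three defining ingredients of a noncommutative $3$-BiHom-Poisson color algebra one at a time. First, by Lemma \ref{BiH-ass-from-BiH-den}, the pair $(\mathfrak{g},\mu,\epsilon,\alpha,\beta)$ with $\mu=\prec+\succ$ is a BiHom-associative color algebra, so item~1 of the definition holds. Second, by Proposition-Definition \ref{pro:subadj}, the tuple $(\mathfrak{g},[\cdot,\cdot,\cdot]^C,\epsilon,\alpha,\beta)$ is a (multiplicative) $3$-BiHom-Lie color algebra, so item~2 holds. Hence the only thing that still needs to be verified is the BiHom-Leibniz rule \eqref{BiHom-Leibniz-role}, now read for the commutator bracket $[\cdot,\cdot,\cdot]^C$ and the product $\mu$, i.e.
\[
[\alpha\beta(x),\alpha\beta(y),\mu(z,t)]^C=\mu\big([\beta(x),\beta(y),z]^C,\beta(t)\big)+\epsilon(x+y,z)\,\mu\big(\beta(z),[\alpha(x),\alpha(y),t]^C\big)
\]
for all $x,y,z,t\in\mathcal{H}(\mathfrak{g})$ (it suffices to check it on homogeneous elements by multilinearity).

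To verify this identity I would substitute the definition \eqref{eq:3cc} of $[\cdot,\cdot,\cdot]^C$ into every commutator occurring on both sides and the relation \eqref{associative-dendriform} into every occurrence of $\mu$. Since $\alpha$ and $\beta$ are commuting automorphisms compatible with $\prec$ and $\succ$ (hence with $\mu$ and with $[\cdot,\cdot,\cdot]^C$), all powers $\alpha^{\pm1},\beta^{\pm1}$ can be pushed onto the innermost arguments. After this expansion the left-hand side becomes a sum of terms of three types: a bracket $\{\cdot,\cdot,\cdot\}$ with a product $z\prec t$ (up to twists) in its third slot, one with $z\succ t$ in its third slot, and brackets in which — because the last two summands of \eqref{eq:3cc} transpose the third argument into the middle slot — a dendriform product of (twisted) $z$ and $t$ sits in the middle slot. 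The right-hand side splits analogously: the outer $\mu(-,\beta(t))$ and $\mu(\beta(z),-)$ become $\prec$- and $\succ$-products, and the inner commutators unfold via \eqref{eq:3cc} into three $\{\cdot,\cdot,\cdot\}$-brackets each.

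Then the terms are grouped and matched. After renaming $x,y,z,t$ by suitable $\alpha^{i}\beta^{j}$-images, the terms carrying a factor $(\cdot)\prec(\cdot)$ in the third argument of a bracket are precisely the two sides of the compatibility condition \eqref{cond-3-BiH-pre-Poiss-color1}; the terms carrying $(\cdot)\succ(\cdot)$ in the third argument reproduce \eqref{cond-3-BiH-pre-Poiss-color2}; and the terms in which a product $\mu(y,z)=y\prec z+y\succ z$ appears in the middle argument of a bracket reproduce \eqref{cond-3-BiH-pre-Poiss-color3}. To bring the individual terms into the exact shape of these conditions one uses the $\epsilon$-BiHom-skew-symmetry \eqref{3-BiHom-pre-Lie-color2} and the multiplicativity \eqref{3-BiHom-pre-Lie-color1} of $\{\cdot,\cdot,\cdot\}$, together with the BiHom-dendriform axioms \eqref{cond-BiHom-color-dend1}-\eqref{cond-BiHom-color-dend3} and Lemma \ref{BiH-ass-from-BiH-den}; the $3$-BiHom-pre-Lie identities \eqref{3-BiHom-pre-Lie-color3}-\eqref{3-BiHom-pre-Lie-color4} are not needed at this stage, having already been consumed in Proposition-Definition \ref{pro:subadj}. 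The main obstacle is purely computational: keeping exact track of the commutation factors $\epsilon(\cdot,\cdot)$ produced every time two homogeneous arguments are transposed, and of the accumulating powers of $\alpha$ and $\beta$, so as to confirm that the three compatibility conditions account for all the terms with nothing left over. Once this bookkeeping closes, the BiHom-Leibniz rule holds, and therefore $(\mathfrak{g},[\cdot,\cdot,\cdot]^C,\mu,\epsilon,\alpha,\beta)$ is a noncommutative $3$-BiHom-Poisson color algebra, as claimed.
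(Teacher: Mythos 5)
Your proposal follows the paper's proof essentially verbatim: invoke Lemma \ref{BiH-ass-from-BiH-den} and Proposition-Definition \ref{pro:subadj} for the BiHom-associative and $3$-BiHom-Lie structures, then check the BiHom-Leibniz rule by expanding $[\alpha\beta(x),\alpha\beta(y),z\prec t+z\succ t]^C$ via \eqref{eq:3cc} and absorbing the resulting terms with the three compatibility conditions \eqref{cond-3-BiH-pre-Poiss-color1}--\eqref{cond-3-BiH-pre-Poiss-color3}, exactly as in the paper's displayed computation. The only difference is that you outline the bookkeeping of $\epsilon$-factors and $\alpha,\beta$-powers rather than writing it out, but the grouping you describe (the $\prec$-third-slot terms matching \eqref{cond-3-BiH-pre-Poiss-color1}, the $\succ$-third-slot terms matching \eqref{cond-3-BiH-pre-Poiss-color2}, the middle-slot product terms matching \eqref{cond-3-BiH-pre-Poiss-color3}) is precisely how the paper's calculation closes.
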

\begin{proof}
By Proposition \ref{pro:subadj}, the even trilinear operation $[\cdot,\cdot,\cdot]^C$ gives a $3$-BiHom-Lie color algebra structure on $\mathfrak{g}$ and Lemma \ref{BiH-ass-from-BiH-den} gives that $(\mathfrak{g},\mu,\epsilon,\alpha,\beta)$ is a noncommutative BiHom-associative color algebra.

By definition of $[\cdot,\cdot,\cdot]^C$ and using  \eqref{cond-3-BiH-pre-Poiss-color1}-\eqref{cond-3-BiH-pre-Poiss-color3}, we obtain, for $x,y,z,t\in\mathcal{H}(\mathfrak{g})$,
\begin{align*}
& [\alpha\beta(x),\alpha\beta(y),\mu(z,t)]^C= [\alpha\beta(x),\alpha\beta(y),z\prec t]^C+[\alpha\beta(x),\alpha\beta(y),z\succ t]^C\\
&=\{\alpha\beta(x),\alpha\beta(y),z\prec t\}-\epsilon(y,z+t)\{\alpha\beta(x),\alpha^{-1}\beta(z)\prec\alpha^{-1}\beta(t),\alpha^2(y)\}\\
&\quad +\epsilon(x,y+z+t)\{\alpha\beta(y),\alpha^{-1}\beta(z)\prec\alpha^{-1}\beta(t),\alpha^2(x)\}+\{\alpha\beta(x),\alpha\beta(y),z\succ t\}\\
&\quad -\epsilon(y,z+t)\{\alpha\beta(x),\alpha^{-1}\beta(z)\succ\alpha^{-1}\beta(t),\alpha^2(y)\}\\
&\quad +\epsilon(x,y+z+t)
\{\alpha\beta(y),\alpha^{-1}\beta(z)\succ\alpha^{-1}\beta(t),\alpha^2(x)\}\\
&=\beta(z)\prec\Big(\{\alpha(x),\alpha(y),t\}-\epsilon(y,t)\{\alpha(x),\alpha^{-1}\beta(t),\alpha^2\beta^{-1}(y)\}\\
&\quad +\epsilon(x,y+t)\{\alpha(y),\alpha^{-1}\beta(t),\alpha^2\beta^{-1}(x)\}\Big)+\{\beta(x),\beta(y),z\}\prec\beta(t)\\
&\quad +\Big(\{\beta(x),\beta(y),z\}-\epsilon(y,z)\{\beta(x),\alpha^{-1}\beta(z),\alpha(y)\}+\epsilon(x,y+z)\{\beta(y),\alpha^{-1}\beta(z),\alpha(x)\}\Big)\succ\beta(t)\\
&\quad +\epsilon(x+y,z)\beta(z)\succ\{\alpha(x),\alpha(y),t\}-\epsilon(y,z+t)\epsilon(x,z)\beta(z)\succ\{\alpha(x),\alpha^{-1}\beta(t),\alpha^2\beta^{-1}(y)\}\\
&\quad -\epsilon(y,z)
\{\beta(x),\alpha^{-1}\beta(z),\alpha(y)\}\prec\beta(t)
+\epsilon(x,y+z+t)\epsilon(y,z)\beta(z)\succ\{\alpha(y),\alpha^{-1}\beta(t),\alpha^2\beta^{-1}(x)\}\\
&\quad +\epsilon(x,y+z)\{\beta(y),\alpha^{-1}\beta(z),\alpha(x)\}\prec\beta(t)\\
&=\Big(\{\beta(x),\beta(y),z\}-\epsilon(y,z)\{\beta(x),\alpha^{-1}\beta(z),\alpha(y)\}+\epsilon(x,y+z)\{\beta(y),\alpha^{-1}\beta(z),\alpha(x)\}\Big)\prec\beta(t)\\
&\quad+\Big(\{\beta(x),\beta(y),z\}-\epsilon(y,z)\{\beta(x),\alpha^{-1}\beta(z),\alpha(y)\}+\epsilon(x,y+z)\{\beta(y),\alpha^{-1}\beta(z),\alpha(x)\}\Big)\succ\beta(t)\\
&\quad+\beta(z)\prec\Big(\{\alpha(x),\alpha(y),t\}-\epsilon(y,t)\{\alpha(x),\alpha^{-1}\beta(t),\alpha^2\beta^{-1}(y)\}\\
&\hspace{4cm}+\epsilon(x,y+t)\{\alpha(y),\alpha^{-1}\beta(z),\alpha^2\beta^{-1}(x)\}\Big)\\
&\quad +\beta(z)\succ\Big(\{\alpha(x),\alpha(y),t\}-\epsilon(y,t)\{\alpha(x),\alpha^{-1}\beta(t),\alpha^2\beta^{-1}(y)\}\\
&\hspace{4cm} +\epsilon(x,y+t)\{\alpha(x),\alpha^{-1}\beta(z),\alpha^2\beta^{-1}(x)\}\Big)\\
&=\mu\big([\beta(x),\beta(y),z]^C,\beta(t)\big)+\epsilon(x+y,z)\mu\big(\beta(z),[\alpha(x),\alpha(y),t]^C\big).
\end{align*}
Then, the BiHom Leibniz rule \eqref{BiHom-Leibniz-role} is satisfied by $[\cdot,\cdot,\cdot]^C$ and $\mu$ wich gives the result.
\end{proof}
\begin{thm}
Let $(\mathfrak{g},\{\cdot,\cdot,\cdot\},\mu,\epsilon,\alpha,\beta)$ be a noncommutative $3$-BiHom-Poisson color algebra and $T:V\to\mathfrak{g}$ be a Kupershmidt operator on $\mathfrak{g}$ with respect to representation $(V,\rho,\mathfrak l,\mathfrak r,\alpha_V,\beta_V)$. Then, the operations
$\{\cdot,\cdot,\cdot\}_V,\;\prec_V$ and $\succ_V$ define a noncommutative $3$-BiHom-pre-Poisson color algebra structure on $V$, where $\{\cdot,\cdot,\cdot\}_V$ is given by Theorem \ref{3-BiH-pre-Lie-idu-Kup} and $\prec_V,\;\succ_V$ are given by Theorem \ref{BiH-dedr-color-ind-Kup}.
\end{thm}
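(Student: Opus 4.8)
The plan is to reduce the statement to what has already been proved for the Lie part and the associative part separately, and then to check only the three compatibility conditions. First, unwinding the definition of a Kupershmidt operator on a noncommutative $3$-BiHom-Poisson color algebra, $T$ is simultaneously a Kupershmidt operator on the $3$-BiHom-Lie color algebra $(\mathfrak{g},\{\cdot,\cdot,\cdot\},\epsilon,\alpha,\beta)$ with respect to $(V,\rho,\alpha_V,\beta_V)$ and a Kupershmidt operator on the BiHom-associative color algebra $(\mathfrak{g},\mu,\epsilon,\alpha,\beta)$ with respect to the bimodule $(V,\mathfrak l,\mathfrak r,\alpha_V,\beta_V)$. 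Therefore Theorem \ref{3-BiH-pre-Lie-idu-Kup} gives that $(V,\{\cdot,\cdot,\cdot\}_V,\epsilon,\alpha_V,\beta_V)$ is a $3$-BiHom-pre-Lie color algebra and Theorem \ref{BiH-dedr-color-ind-Kup} gives that $(V,\prec_V,\succ_V,\epsilon,\alpha_V,\beta_V)$ is a BiHom-dendriform color algebra, so all that remains is to verify the compatibility conditions \eqref{cond-3-BiH-pre-Poiss-color1}--\eqref{cond-3-BiH-pre-Poiss-color3} for these operations on $V$.

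Before that I would record the bridging identities used throughout. From the Kupershmidt conditions, $T\circ\alpha_V=\alpha\circ T$ and $T\circ\beta_V=\beta\circ T$. Since $\alpha_V$ and $\beta_V$ commute, the $3$-BiHom-$\epsilon$-commutator $[\cdot,\cdot,\cdot]^C$ attached to $\{\cdot,\cdot,\cdot\}_V$ by \eqref{eq:3cc} coincides with the bracket $[\cdot,\cdot,\cdot]_T$ of \eqref{3-BiH-Lie-color-kup}, and hence \eqref{cond-O-op-3-BiH-Lie-color2} reads exactly $T([u,v,w]_T)=[T(u),T(v),T(w)]$. Likewise \eqref{cond-O-op-BiH-ass-color2} gives $T(u\succ_V v+u\prec_V v)=\mu(T(u),T(v))$, so the BiHom-associative product $\mu_V:=\prec_V+\succ_V$ on $V$ of Lemma \ref{BiH-ass-from-BiH-den} satisfies $T\circ\mu_V=\mu\circ(T\otimes T)$.

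Then, in each of \eqref{cond-3-BiH-pre-Poiss-color1}--\eqref{cond-3-BiH-pre-Poiss-color3} I would substitute $\{u,v,w\}_V=\rho(T(u),T(v))w$, $u\prec_V v=\epsilon(u,v)\mathfrak r(T(v))u$ and $u\succ_V v=\mathfrak l(T(u))v$, move all occurrences of $\alpha_V$ and $\beta_V$ outside the $T$'s by $T\circ\alpha_V=\alpha\circ T$, $T\circ\beta_V=\beta\circ T$, and collapse every composite $T$-expression using the bridging identities: the bracket combinations on the right-hand sides are $[\cdot,\cdot,\cdot]^C_V=[\cdot,\cdot,\cdot]_T$ evaluated at twisted arguments and so push down under $T$ to the bracket of $\mathfrak{g}$, while the sums $y\prec_V z+y\succ_V z$ appearing in \eqref{cond-3-BiH-pre-Poiss-color3} push down to $\mu$. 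After these reductions, \eqref{cond-3-BiH-pre-Poiss-color1} becomes the right-module axiom \eqref{repr-BiHom-colr-Poisson2}, \eqref{cond-3-BiH-pre-Poiss-color2} becomes the left-module axiom \eqref{repr-BiHom-colr-Poisson1}, and \eqref{cond-3-BiH-pre-Poiss-color3} becomes the mixed axiom \eqref{repr-BiHom-colr-Poisson3} (equivalently \eqref{equiv-repr-BiHom-colr-Poisson3}, since $\alpha$ and $\beta_V$ are bijective), each evaluated at $T(u),T(v),T(s)$ and acted on the remaining vector of $V$. These hold because $(V,\rho,\mathfrak l,\mathfrak r,\alpha_V,\beta_V)$ is a representation of $(\mathfrak{g},\{\cdot,\cdot,\cdot\},\mu,\epsilon,\alpha,\beta)$, which finishes the proof.

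The part that needs care is not conceptual but bookkeeping: after the substitutions one must reconcile, on the two sides of each condition, the exact powers of $\alpha_V,\beta_V$ carried by each argument and the commutation-factor prefactors, using \eqref{cond-bic1}--\eqref{cond-bic3} and $\alpha_V\beta_V=\beta_V\alpha_V$. I expect this to be the main obstacle; I would handle it by first rewriting the bracket combinations on the right-hand sides as single $[\cdot,\cdot,\cdot]^C_V$-terms to keep the number of summands small, and then comparing term by term with the representation axioms, after which each of the three verifications is short.
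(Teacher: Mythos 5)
Your proposal is correct and follows essentially the same route as the paper: invoke Theorems \ref{3-BiH-pre-Lie-idu-Kup} and \ref{BiH-dedr-color-ind-Kup} for the $3$-BiHom-pre-Lie and BiHom-dendriform structures, then verify \eqref{cond-3-BiH-pre-Poiss-color1}--\eqref{cond-3-BiH-pre-Poiss-color3} by substituting the induced operations and reducing, via $T\circ\alpha_V=\alpha\circ T$, $T\circ\beta_V=\beta\circ T$ and the Kupershmidt identities \eqref{cond-O-op-BiH-ass-color2}, \eqref{cond-O-op-3-BiH-Lie-color2}, to the representation axioms \eqref{repr-BiHom-colr-Poisson1}--\eqref{repr-BiHom-colr-Poisson3}. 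Your matching of which compatibility condition collapses to which axiom (namely \eqref{cond-3-BiH-pre-Poiss-color1} to \eqref{repr-BiHom-colr-Poisson2}, \eqref{cond-3-BiH-pre-Poiss-color2} to \eqref{repr-BiHom-colr-Poisson1}, \eqref{cond-3-BiH-pre-Poiss-color3} to \eqref{repr-BiHom-colr-Poisson3}) is exactly what the paper's computation does.
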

\begin{proof}
By using Theorems \ref{3-BiH-pre-Lie-idu-Kup} and \ref{BiH-dedr-color-ind-Kup}, we deduce that $(V,\{\cdot,\cdot,\cdot\}_V,\epsilon,\alpha_V,\beta_V)$ is a $3$-BiHom-pre-Lie color algebra and $(V,\prec_V,\succ_V,\epsilon,\alpha_V,\beta_V)$ is a BiHom-dendriform color algebra. It remains to show that conditions \eqref{cond-3-BiH-pre-Poiss-color1}-\eqref{cond-3-BiH-pre-Poiss-color3} are satisfied.
For any $x,y,z,t\in\mathcal{H}(V)$, by using the definitions of
$\{\cdot,\cdot,\cdot\}_V$ and $\prec_V$ and the fact that $T$ is Kupershmidt of the $3$-BiHom-Poisson color algebra $\mathfrak{g}$, we obtain
\begin{align*}
&\{\alpha_V\beta_V(x),\alpha_V\beta_V(y),z\prec_Vt\}_V-
\{\beta_V(x),\beta_V(y),z\}_V\prec_V\beta_V(t) \\
&=\epsilon(z,t)\{\alpha_V\beta_V(x),\alpha_V\beta_V(y),\mathfrak{r}(T(t))z\}_V-\epsilon(t,x+y+z)\mathfrak{r}(T(\beta_V(t)))\{\beta_V(x),\beta_V(y),z\}_V\\
&= \epsilon(z,t)\rho(\alpha\beta(T(x)),\alpha\beta(T(y)))\mathfrak{r}(T(t))z- \epsilon(t,x+y+z)\mathfrak{r}(\beta(T(t)))\rho(\beta(T(x)),\beta(T(y)))z\\
&=\epsilon(t,x+y+z)\mathfrak{r}(\beta(T(t)))\rho(\beta(T(x)),\beta(T(y)))z\\
&\quad -\epsilon(t,x+y+z)\mathfrak{r}(\beta(T(t)))\rho(\beta(T(x)),\beta(T(y)))z+\epsilon(z,t)\mathfrak{r}(\{\alpha(T(x)),\alpha(T(y)),T(t)\})\beta_V(z)\\
&= \epsilon(z,t)\mathfrak{r}(\{T(\alpha_V(x)),T(\alpha_V(y)),T(t)\})\beta_V(z)\\
&=\epsilon(z,t)\mathfrak{r}\Big(T\Big(\rho(T(\alpha_V(x)),T(\alpha_V(y)))t-\epsilon(y,t)\rho(T(\alpha_V(x)),T(\alpha_V^{-1}\beta_V(t)))\alpha_V^2\beta_V^{-1}(y)\\
&\quad +\epsilon(x,y+t)\rho(T(\alpha_V(y)),T(\alpha_V^{-1}\beta_V(t)))\alpha_V^2\beta_V^{-1}(x)\Big)\Big)\beta_V(z)\\
&=\epsilon(x+y,z)\beta_V(z)\prec_V\rho(T(\alpha_V(x)),T(\alpha_V(y)))t\\
&\quad -\epsilon(x+y,z)\epsilon(y,t)\beta_V(z)\prec_V\rho(T(\alpha_V(x)),T(\alpha_V^{-1}\beta_V(t)))\alpha_V^2\beta_V^{-1}(y)\\
&\quad +\epsilon(x,y+z+t)\epsilon(x,t)\epsilon(z,t)\beta_V(z)\prec_V\rho(T(\alpha_V(y)),T(\alpha_V^{-1}\beta_V(t)))\alpha_V^2\beta_V^{-1}(x)\\
&=\epsilon(z,x+y)\beta_V(z)\prec_V\Big(\{\alpha_V(x),\alpha_V(y),t\}_V-\epsilon(y,t)\{\alpha_V(x),\alpha_V^{-1}\beta_V(t),\alpha^2\beta_V^{-1}(y)\}_V\\
&\quad+\epsilon(x,y+t)\{\alpha_V(y),\alpha_V^{-1}\beta_V(t),\alpha^2\beta_V^{-1}(x)\}_V\Big),
\end{align*}
then, the condition \eqref{cond-3-BiH-pre-Poiss-color1} is satisfied. We have also
\begin{align*}
& \{\alpha_V\beta_V(x),\alpha_V^2\beta_V(y),\alpha_V(z)\succ_V t\}_V-\epsilon(x+y,z)\alpha_V\beta_V(z)\succ_V\{\alpha_V(x),\alpha_V^2(y),t\}_V\\
&= \{\alpha_V\beta_V(x),\alpha_V^2\beta_V(y),\mathfrak{l}(T\alpha_V(z)) t\}_V
\epsilon(x+y,z)\mathfrak{l}(T\alpha_V\beta_V(z))\{\alpha_V(x),\alpha_V^2(y),t\}_V\\
&=\rho\big(T(\alpha_V\beta_V(x)),T(\alpha_V^2\beta_V(y))\big)\mathfrak{l}(T(\alpha_V(z)))t \\
&\quad -\epsilon(x+y,z)\mathfrak{l}(T(\alpha_V\beta_V(z)))\rho\big(T(\alpha_V(x)),T(\alpha_V^2(y))\big)t\\
&=\mathfrak{l}\big(\{T(\beta_V(x)),T(\alpha_V\beta_V(y)),T(\alpha_V(z))\}\big)\beta_V(t)
\\
&\quad+\epsilon(x+y,z)\mathfrak{l}(T(\alpha_V\beta_V(z)))\rho\big(T(\alpha_V(x)),T(\alpha_V^2(y))\big)t\\
&\quad-\epsilon(x+y,z)\mathfrak{l}(T(\alpha_V\beta_V(z)))\rho\big(T(\alpha_V(x)),T(\alpha_V^2(y))\big)t\\
&= \mathfrak{l}\big(\{T(\beta_V(x)),T(\alpha_V\beta_V(y)),T(\alpha_V(z))\}\big)\beta_V(t)\\
&=\mathfrak{l}\Big(T\big(\rho(T(\beta_V(x)),T(\alpha_V\beta_V(y)))\alpha_V(z)-\epsilon(y,z)\rho(T(\beta_V(x)),T(\beta_V(z)))\alpha_V^2(y)\\
&\quad +\epsilon(x,y+z)\rho(T(\alpha_V\beta_V(y)),T(\beta_V(z)))\alpha_V(x)\big)\Big)\beta_V(t)\\&=\Big(\rho(T(\beta_V(x)),T(\alpha_V\beta_V(y)))\alpha_V(z)-\epsilon(y,z)\rho(T(\beta_V(x)),T(\beta_V(z)))\alpha_V^2(y)\\
&\quad +\epsilon(x,y+z)\rho(T(\alpha_V\beta_V(y)),T(\beta_V(z)))\alpha_V(x)\Big)\succ_V\beta_V(t)\\&=\Big(\{\beta_V(x),\alpha_V\beta_V(y),\alpha_V(z)\}_V-\epsilon(y,z)\{\beta_V(x),\beta_V(z),\alpha_V^2(y)\} \\
&\quad +\epsilon(x,y+z)\{\alpha_V\beta_V(y),\beta_V(z),\alpha_V(x)\}\Big)\succ_V\beta_V(t),
\end{align*}
which shows that the condition \eqref{cond-3-BiH-pre-Poiss-color2} is satisfied. The same direct computation gives the condition \eqref{cond-3-BiH-pre-Poiss-color3}. The Theorem is proved.
\end{proof}
\begin{cor}
Let $(\mathfrak{g},\{\cdot,\cdot,\cdot\},\mu,\epsilon,\alpha,\beta)$ be a noncommutative $3$-BiHom-Poisson color algebra and $\mathcal{R}:\mathfrak{g}\to\mathfrak{g}$ be a Rota-Baxter operator of weight zero on $\mathfrak{g}$ with respect to a representation $(V,\rho,l,r,\alpha_V,\beta_V)$. Then, the tuple $(\mathfrak{g},\{\cdot,\cdot,\cdot\}_{\mathcal{R}},\prec_\mathcal{R},\succ_{\mathcal{R}},\epsilon,\alpha,\beta)$ is a $3$-BiHom-pre-Poisson color algebra, where $\{\cdot,\cdot,\cdot\}_{\mathcal{R}}$ is given in Corollary \ref{ind-RB-3-BiH-pre-Lie-color} and $\prec_{\mathcal{R}},\;\succ_{\mathcal{R}}$ are given in Corollary \ref{ind-RB-BiH-dend-color}.
\end{cor}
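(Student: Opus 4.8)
The plan is to obtain this corollary as the adjoint-representation specialization of the previous theorem, which produces a noncommutative $3$-BiHom-pre-Poisson color algebra on $V$ from a Kupershmidt operator $T\colon V\to\mathfrak{g}$. Recall that, by definition, a Rota-Baxter operator $\mathcal{R}$ of weight zero on a noncommutative $3$-BiHom-Poisson color algebra $(\mathfrak{g},\{\cdot,\cdot,\cdot\},\mu,\epsilon,\alpha,\beta)$ is exactly a Kupershmidt operator of $\mathfrak{g}$ with respect to the representation $(\mathfrak{g},ad,\mathfrak{L},\mathfrak{R},\alpha,\beta)$, where $ad$ is the adjoint representation of the $3$-BiHom-Lie color algebra $(\mathfrak{g},\{\cdot,\cdot,\cdot\},\epsilon,\alpha,\beta)$, $\mathfrak{L},\mathfrak{R}$ are the left and right multiplication operators of $(\mathfrak{g},\mu,\epsilon,\alpha,\beta)$, and the twisting maps on $V=\mathfrak{g}$ are $\alpha_V=\alpha$, $\beta_V=\beta$. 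The first step is therefore to record that this adjoint data is a bona fide representation of the noncommutative $3$-BiHom-Poisson color algebra: $(\mathfrak{g},\mathfrak{L},\mathfrak{R},\alpha,\beta)$ is a bimodule of $(\mathfrak{g},\mu,\epsilon,\alpha,\beta)$ (immediate from BiHom-associativity), $(\mathfrak{g},ad,\alpha,\beta)$ is a representation of $(\mathfrak{g},\{\cdot,\cdot,\cdot\},\epsilon,\alpha,\beta)$ (the $\epsilon$-$3$-BiHom-Jacobi identity), and the three mixed conditions \eqref{repr-BiHom-colr-Poisson1}--\eqref{repr-BiHom-colr-Poisson3} for the triple $(ad,\mathfrak{L},\mathfrak{R})$ are precisely a rewriting of the BiHom-Leibniz rule \eqref{BiHom-Leibniz-role}.

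With this in hand, the second step is to apply the previous theorem with $T=\mathcal{R}$, $V=\mathfrak{g}$, and $(\rho,\mathfrak{l},\mathfrak{r},\alpha_V,\beta_V)=(ad,\mathfrak{L},\mathfrak{R},\alpha,\beta)$; it furnishes a noncommutative $3$-BiHom-pre-Poisson color algebra structure on $\mathfrak{g}$ with the three operations $\{\cdot,\cdot,\cdot\}_V,\prec_V,\succ_V$ built there. The third step is to identify these with the operations in the statement: by Theorem \ref{3-BiH-pre-Lie-idu-Kup}, $\{x,y,z\}_V=\rho(T(x),T(y))z=[\mathcal{R}(x),\mathcal{R}(y),z]$, which is the bracket $\{\cdot,\cdot,\cdot\}_\mathcal{R}$ of Corollary \ref{ind-RB-3-BiH-pre-Lie-color}; and by Theorem \ref{BiH-dedr-color-ind-Kup}, substituting $\mathfrak{l}=\mathfrak{L}$, $\mathfrak{r}=\mathfrak{R}$, $T=\mathcal{R}$ and simplifying the commutation-factor coefficients (using that $\mathcal{R}$ is even, so $|\mathcal{R}(x)|=|x|$), the induced products $\prec_V,\succ_V$ collapse onto $\prec_\mathcal{R},\succ_\mathcal{R}$ from Corollary \ref{ind-RB-BiH-dend-color}. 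Since all three induced operations coincide with $\{\cdot,\cdot,\cdot\}_\mathcal{R},\prec_\mathcal{R},\succ_\mathcal{R}$, the corollary follows.

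I expect the only delicate point to be the bookkeeping in the first and third steps: confirming that \eqref{repr-BiHom-colr-Poisson1}--\eqref{repr-BiHom-colr-Poisson3} for $(ad,\mathfrak{L},\mathfrak{R})$ are equivalent to the single identity \eqref{BiHom-Leibniz-role} (rewrite each mixed condition by replacing $\mathfrak{L},\mathfrak{R},ad$ with the corresponding multiplications and compare with \eqref{BiHom-Leibniz-role} after using $\epsilon$-BiHom-skew-symmetry and BiHom-associativity), and tracking the commutation factors so that $x\prec_V y$ and $x\succ_V y$ reduce to the stated closed forms. This requires no genuinely new computation, however: Corollaries \ref{ind-RB-3-BiH-pre-Lie-color} and \ref{ind-RB-BiH-dend-color} were themselves obtained as exactly this kind of adjoint specialization of Theorems \ref{3-BiH-pre-Lie-idu-Kup} and \ref{BiH-dedr-color-ind-Kup}, so the present statement is a clean combination of those two corollaries through the previous theorem, and the proof amounts to unwinding the definitions.
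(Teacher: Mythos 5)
Your proposal is correct and matches the paper's (implicit) argument: the corollary is exactly the specialization of the preceding theorem to $T=\mathcal{R}$, $V=\mathfrak{g}$ and the adjoint data $(\mathfrak{g},ad,\mathfrak{L},\mathfrak{R},\alpha,\beta)$, which the paper's definition of a Rota--Baxter operator of weight zero already designates as the relevant representation. Since the paper gives no separate proof, your unwinding of Theorems \ref{3-BiH-pre-Lie-idu-Kup} and \ref{BiH-dedr-color-ind-Kup} to recover $\{\cdot,\cdot,\cdot\}_{\mathcal{R}}$, $\prec_{\mathcal{R}}$, $\succ_{\mathcal{R}}$ is precisely the intended route.
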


\section*{Acknowledgment}

The authors thank Professor Sami Mabrouk for his helpful discussions and suggestions.



\begin{thebibliography}{999}

\bibitem{AAMM}
 Adimi, H., Amri, H., Mabrouk, S., Makhlouf, A., 
   {(Non-BiHom-Commutative) BiHom-Poisson algebras}.
 arXiv:2008.04597 (2020)

\bibitem{Ammar-Mabrouk-Makhlouf}
Ammar, F., Mabrouk, S., Makhlouf, A., 
{Representations and cohomology of $n$-ary multiplicative Hom-Nambu-Lie algebras}, J. Geom. Phys. \textbf{61}(10), 2011, 1898–1913. DOI:https://doi.org/10.1016/j.geomphys.2011.04.022.

\bibitem{ArmakanSilv:envelalgcertaintypescolorHomLie}
Armakan A., Silvestrov S., Enveloping algebras of certain types of color Hom-Lie algebras. In: Silvestrov, S., Malyarenko, A., Ran\u{c}i\'{c}, M. (Eds.), Algebraic Structures and Applications, Springer Proceedings in Mathematics and Statistics, \textbf{317}, Springer, 2020, Ch. 10, 257-284.

\bibitem{ArmakanSilvFarh:envelopalgcolhomLiealg}
Armakan, A., Silvestrov, S., Farhangdoost, M. R., Enveloping algebras of color Hom-Lie algebras, Turk. J. Math. \textbf{43}, 2019, 316-339. (arXiv:1709.06164 [math.QA]) 

\bibitem{ArmakanSilvFarh:exthomLiecoloralg}
Armakan, A., Silvestrov, S., Farhangdoost, M. R., Extensions of Hom-Lie color algebras, Georgian Math. J. 28(1), 2021, 15–27 (publ. online by De Gruyter July 12, 2019).  (preprint: arXiv:1709.08620 [math.QA], 2017) 

\bibitem{Armakan-Silvestrov}
Armakan, A., Silvestrov, S., Color Hom-Lie algebras, color Hom-Leibniz algebras and color omni-Hom-Lie algebras. In: Silvestrov, S., Malyarenko, A. (eds) Non-commutative and Non-associative Algebra and Analysis Structures. SPAS 2019. Springer Proceedings in Mathematics and Statistics, vol 426. Springer, Cham,
Ch. 4, 61-79, 2023

\bibitem{Zahra-Esmaeil}
Bagheri, Z., Peyghan, E.,
   {$3$-Bihom-$\rho$-Lie algebras, $3$-Pre-Bihom-$\rho$-Lie algebras}.
 Chin. Ann. Math. Ser. B.
 \textbf{44}(2), 2023, 193–208.

\bibitem{Bakayoko1}
Bakayoko, I., 
{Modules over color Hom-Poisson algebras}.
 J. Generalized Lie Theory Appl. 2014, 8:1.

\bibitem{Bakayoko2}
Bakayako, I., {BiHom-Poisson color algebras}. arXiv:2007.10801, (2020)

\bibitem{Bakayoko-Laraiedh}
Bakayoko. I, Laraiedh. I., {Constructions and generalised derivations of multiplicative $n$-BiHom-Lie color algebras}. arXiv:2006.16049 (2020)

\bibitem{Bakayoko-Silvestrov-multiplnHomLiealgs}
Bakayoko, I., Silvestrov, S., Multiplicative $n$-Hom-Lie color algebras.
In: Silvestrov, S., Malyarenko, A., Ran\u{c}i\'{c}, M. (Eds.), Algebraic Structures and Applications, Springer Proceedings in Mathematics and Statistics \textbf{317}, Springer, Ch. 7, 2020, 159-187 (arXiv:1912.10216[math.QA])

\bibitem{BKP}
Beites. P. D., Kaygorodov. I., Popov. Y., 
   {Generalized derivations of multiplicative $n$-ary Hom-$\Omega$ color algebras}.
 Bulletin of the Malaysian Mathematical Sciences Society.  \textbf{42}, 2019, 315-335.

\bibitem{BCMN}
Benhassine, A., Ctioui, T., Mabrouk, S.,  Ncib, O.,
   {Cohomology and linear deformation of BiHom-left-symmetric algebras}.
 arXiv:1907.06979.

\bibitem{BSO1}
Ben Hassine, A., Mabrouk, S., Ncib, O.,  { 3-BiHom-Lie superalgebras induced
by BiHom-Lie superalgebras}. 
 Linear Multilinear Algebra
 \textbf{68}(3), 2020, 1–21.

\bibitem{Benhassine-Mabrouk-Ncib}
 Ben Hassine, A., Mabrouk, S., Ncib, O., 
   {Some Constructions of Multiplicative $n$-ary Hom–Nambu Algebras}.
 Adv. Appl. Clifford Algebras.
 \textbf{29}, 88, 2019

\bibitem{Braconnier77} Braconnier, J., Alg{\`e}bres de Poisson. C.R. Acad. Sci. Paris , A284 (1977) pp. 1345–1348

\bibitem{CQ}
Cheng, Y.,  Qi, H.,     {Representations of BiHom-Lie algebras}. Algebra Colloquium, 29, No. 01, 2022, 125-142

\bibitem{Cheng-Su}
Cheng, Y., Su, Y., 
   {(Co)Homology and universal central extension of Hom-Leibniz algebras}, Acta Math. Sin. (Engl. Ser.)
\textbf{27}, 5, 2011, 813–830. 

\bibitem{CMM1}
Chtioui, T., Mabrouk, S., Makhlouf, A.,  
   {BiHom-alternative, BiHom-Malcev and BiHom-Jordan algebras}. Rocky Mountain Journal of Mathematics, \textbf{50}(1), 2020, 69-90.

\bibitem{CMM2}
Chtioui, T., Mabrouk, S., Makhlouf, A.,
   {Deformations and Extensions of BiHom-alternative algebras}.
 International Electronic Journal of Algebra,
 \textbf{34}, 2023, 159-181.

\bibitem{Feldvoss}
Feldvoss, J., 
   {Representations of Lie colour algebras}.
 Adv. Math. 
 \textbf{157}, 2001, 95-137. 

\bibitem{Filippov} 
Filippov, V., 
   {$n$-Lie algebras}. 
 Sib. Mat. Zh.,
\textbf{26}, 1985, 126-140.

\bibitem{Gerstenhaber}
Gerstenhaber, M., 
   {On the deformation of rings and algebras: II}.
 Ann. Math. 
 \textbf{84}, no. 1, 1966, 1-19. 

\bibitem{Ginzburg-Kaledin}
Ginzburg, V., Kaledin, D., 
   {Poisson deformations of symplectic quotient singularities}.
 Adv. Math. 
 \textbf{186}(1), 2004, 1–57.

\bibitem{GMMP}
Graziani, G., Makhlouf, A., Menini, C., Panaite, F.,     {BiHom-associative algebras, BiHom-Lie algebras and BiHom-bialgebras}. SIGMA.  \textbf{11}, 2015, 34 pp.

\bibitem{Hartwig-Larsson-Silvestrov}
Hartwig, J. T., Larsson, D., Silvestrov, S. D.,
Deformations of Lie algebras using $\sigma$-derivations, J. Algebra, \textbf{295}(2),  314-361 (2006). 
(Preprints in Mathematical Sciences 2003:32, LUTFMA-5036-2003, Centre for Mathematical Sciences, Lund University, 52 pp. (2003))

\bibitem{FSOS}
Harrathi, F., Mabrouk, S., Ncib, O., Silvestrov, S., 
   {Hom-pre-Malcev and Hom-M-Dendriform algebras}.
 arXiv:2105.00606.

\bibitem{Jin-X-Li}
Jin, Q., Li, X., 
   {Hom-Lie algebra structures on semi-simple Lie algebras}. 
 J. Algebra, {319}, no. 4, 2008, 1398–1408.
DOI:https://doi.org/10.1016/j.jalgebra.2007.12.005.

\bibitem{LarssonSilvJA2005:QuasiHomLieCentExt2cocyid}
Larsson, D., Silvestrov, S. D., Quasi-hom-Lie algebras, central extensions and $2$-cocycle-like identities. J. Algebra \textbf{288}, 321-344 (2005)
(Preprints in Mathematical Sciences 2004:3, LUTFMA-5038-2004, Centre for Mathematical Sciences, Lund University (2004)).

\bibitem{Larsson-Silvestrov-QuasiLiealg} Larsson, D., Silvestrov, S. D., Quasi-Lie algebras. In
"Noncommutative Geometry and Representation Theory in Mathematical Physics". Contemp. Math., 391, Amer. Math. Soc., Providence, RI, 241-248 (2005)
(Preprints in Mathematical Sciences 2004:30, LUTFMA-5049-2004, Centre for Mathematical Sciences, Lund University (2004))

\bibitem{LSGradedquasiLiealg}
Larsson, D., Silvestrov, S. D., Graded quasi-Lie agebras. Czechoslovak J. Phys. \textbf{55}, 1473-1478 (2005)

\bibitem{LarssonSilvestrov-quasidefal2twderiv}
Larsson, D., Silvestrov, S. D., Quasi-deformations of $sl_2(\mathbb{F})$ using twisted derivations. Comm. Algebra, \textbf{35}, 4303-4318 (2007).
(Preprints in Mathematical Sciences 2004:26, LUTFMA-5047-2004, Centre for Mathematical Sciences, Lund University (2004). arXiv:math/0506172 [math.RA] (2005))

\bibitem{LCC}
Li, J, Chen, L., Cheng, Y., {Representations of Bihom-Lie superalgebras}.
 Linear and Multilinear Algebra,.
 \textbf{67}(2019), 299-326.

\bibitem{Ma-Chen-Lin}
Ma, Y., Chen, L., Lin, J., 
   {One-parameter formal deformations of Hom-Lie-Yamaguti algebras}.
 J. Math. Phys. \textbf{56}, no. 1, 011701, 2015 

\bibitem{ms:homstructure}
Makhlouf, A., Silvestrov, S. D., Hom-algebra structures,
J. Gen. Lie Theory Appl. \textbf{2}(2), 2008, 51-64
(Preprints in Mathematical Sciences  2006:10, LUTFMA-5074-2006, Centre for Mathematical Sciences, Lund University (2006))

\bibitem{Makhlouf-Silvestrov}
Makhlouf, A., Silvestrov, S.,  
   {Notes on $1$-parameter formal deformations of Hom-associative and Hom-Lie algebras}, Forum Math. \textbf{22}(4), 2010, 715-739.
(Preprints in Mathematical Sciences, 2007:31, LUTFMA-5095-2007, Centre for Mathematical Sciences,
Lund University, (2007). arXiv:0712.3130v1 [math.RA])

\bibitem{Nambu:GenHD} 
Nambu. Y., 
   {Generalized Hamiltonian dynamics}. 
 Phys. Rev. D (3) 
\textbf{7}, 1973, 2405 - 2412

\bibitem{ncib1}
Ncib, O.,
 {Bihom-pre-Lie superalgebras and related structures}.
 Hacet. J. Math. Stat. \textbf{51} (5), 2022, 1385-1402

\bibitem{Scheunert1}
Scheunert, M.,
   {The Theory of Lie Superalgebras: An Introduction}.  Springer-Verlag, 1979

\bibitem{Scheunert2}
 Scheunert, M., 
   {Generalized Lie algebras}.  J. Math. Phys. 
 \textbf{20}, 1979, 712–720 

\bibitem{Y-Sheng}
Sheng, Y., 
   {Representations of Hom-Lie algebras}. 
 Algebr. Represent. Theory 
 \textbf{15}(6) 2012, 1081–1098,
DOI: https://doi.org/10.1007/s10468-011-9280-8.

\bibitem{SigSilvGLTbdSpringer2009-witt-centrext} Sigurdsson, G., Silvestrov, S., Lie color and hom-Lie algebras of Witt type and their central extensions, In "Generalized Lie Theory in Mathematics, Physics and Beyond", Springer, Berlin, 247-255 (2009)

\bibitem{Sigurdsson-Silvestrov-Czech:witt} Sigurdsson, G., Silvestrov, S., Graded quasi-Lie algebras of Witt type, Czech. J. Phys. \textbf{56},
2006, 1287-1291

\bibitem{T} 
Takhtajan, L., 
   {On foundation of the generalized Nambu mechanics}. 
 Comm. Math. Phys.  \textbf{160}, 1994, 295-315

\bibitem{Vaisman}
 Vaisman, I., 
    {Lectures on the Geometry of Poisson Manifolds}.
  Progress in Mathematics,
 Vol. 118, Birkhauser Verlag, Basel, 1994.

\bibitem{VinogradovKrasilshchik75} 
Vinogradov, A. M., Krasil'shchik, I. S., What is the Hamiltonian formalism? Russian Math. Surveys , 30:1, 1975, 177-202 (In Russian)

\bibitem{Wang-Gao-Zhang}
 Wang, C., Gao, Q., Zhang, Q.,  
   {Poisson color algebras}. 
 Chinese. Ann. Math. Ser. A.
 \textbf{36}(2), 2015, 209–216.

\bibitem{Wu-Zhu-Chen}
 Wu, X., Zhu, H., Chen, M., 
{The tensor product of n-differential graded Poisson algebras}. J. Zhejiang Univ. Sci. Ed.
 \textbf{42}(4), 2015, 391–395 

\bibitem{Yaling-Yan}
Tao, Y., Cao, Y.,
   {On split regular BiHom-Poisson color algebras}.
 Open Mathematics, \textbf{19},  2021, 600-613.

\bibitem{Zhang-T}
Zhang, T., 
{Cohomology and deformations of 3-Lie colour algebras}. 
Linear and Multilinear Algebra, 
\textbf{63}, 2015, 651-671.

\bibitem{Zhu-HLi-Li}
Zhu, C., Li, H., Li, Y.,  
{Poisson derivations and the first Poisson cohomology group on trivial extension algebras}.
B. Iran. Math Soc. 
\textbf{45}(5), 2019, 1339–1352. 

\end{thebibliography}
\end{document}